\theoremstyle{definition}
\title[Sparse Uniformity Testing]{Sparse Uniformity Testing}
\author{Bhaswar B. Bhattacharya}
\address{Department of Statistics and Data Science, University of Pennsylvania, Philadelphia, USA} 
\email{bhaswar@wharton.upenn.edu}
\author{Rajarshi Mukherjee}
\address{Department of Biostatistics, Harvard University, Boston, USA} 
\email{rmukherj@hsph.harvard.edu} 
\begin{document}

\begin{abstract} 	
In this paper we consider the uniformity testing problem for high-dimensional discrete distributions (multinomials) under sparse alternatives. More precisely, we derive sharp detection thresholds for testing, based on $n$ samples, whether a discrete distribution supported on $d$ elements differs from the uniform distribution only in $s$ (out of the $d$) coordinates and is $\varepsilon$-far (in total variation distance) from  uniformity. Our results reveal various interesting phase transitions which depend on the interplay of the sample size $n$ and the signal strength $\varepsilon$ with the dimension $d$ and the sparsity level $s$. For instance, if the sample size is less than a threshold (which depends on $d$ and $s$), then all tests are asymptotically powerless, irrespective of the magnitude of the signal strength. On the other hand, if the sample size is above the threshold, then the detection boundary undergoes a further phase transition depending on the signal strength. Here, a $\chi^2$-type test attains the detection boundary in the dense regime, whereas in the sparse regime a Bonferroni correction of two maximum-type tests and a version of the Higher Criticism test is optimal up to sharp constants. These results combined provide a complete description of the phase diagram for the sparse uniformity testing problem across all regimes of the parameters $n$, $d$, $s$, and $\varepsilon$. One of the challenges in dealing with multinomials is that the parameters are always constrained to lie in the simplex. This results in the aforementioned two-layered phase transition, a new phenomenon which does not arise in classical high-dimensional sparse testing problems.   
\end{abstract}
		
	\subjclass[2020]{62G10, 62C20, 62G20}
	\keywords{High-dimensional multinomials, Higher Criticism test, minimax hypothesis testing, sparse signals.}

	
	\maketitle

	\section{Introduction}\label{sec:intro}

Testing whether independent samples from an unknown probability distribution are uniformly distributed over a domain is a classical problem in statistical inference. In this paper we consider the {\it uniformity testing problem} for discrete distributions (multinomials) where, given independent and identically distributed samples  $X_1,\ldots,X_n$ from an unknown probability distribution $\bm p$ on a discrete domain $[d]:=\{1, 2, \ldots, d\}$, the goal is to determine whether $\bm p$ is uniformly distributed over $[d]$ or whether $\bm p$ is $\varepsilon$-far (in total variation distance) from the uniform distribution on $[d]$. The minimax detection radius for this problem, that is, the threshold of $\varepsilon$ above which there exists an asymptotically powerful test and below which all tests are asymptotically powerless, was obtained in the seminal  paper of Paninsky \cite{paninski2008coincidence} (see also \cite{goldreich2011testing} where the problem arose in the context of testing expansion properties of graphs), which was followed by a slew of refinements and extensions (see \cite{ADKoptimal,balakrishnan2019hypothesis,butucea2021locally,canonneuniformity,chhor2022robust,chhor2020sharp,DKuniformity,valiant2017automatic} and the references therein). Recent surveys of Balakrishnan and Wasserman \cite{balakrishnan2018hypothesis} and Canonne \cite{canonne2020survey} provide excellent reviews of the main results in this direction. 

In many applications, the distribution $\bm p$ differs from the null distribution in certain structured ways.  A primary example of such structured hypotheses is the case of sparsity recently considered by Donoho and Kipnis  \cite{donoho2020two}, Kipnis \cite{kipnis2019higher,kipnis2021unification}, and Kipnis and Donoho \cite{kipnis2021inability} in the context of two sample testing of discrete distributions. In the context of uniformity testing on $d$-alphabets, this amounts to adding a sparsity constraint to control the number of coordinates where the allowed distributions are different from the uniform case. In this paper we consider this problem and completely characterize the \textit{sharp} asymptotic minimax detection boundary across all regimes of sparsity, dimension, and sample sizes. 
To formally state our problem, denote by $\cP([d])$ the collection of all probability distributions on the set $[d] := \{1, 2, \ldots, d\}$. More precisely, 
$$\cP([d]) := \left\{ \bm p = (p_1, p_2, \ldots, p_d) : \text{ such that } 0 \leq p_j \leq 1, \text{ for } 1 \leq j \leq d, \text{ and } \sum_{j=1}^d p_j= 1 \right\} .$$ Moreover, let us denote  by $U([d])$ the uniform distribution on $[d]$, where $p_j=1/d$, for all $1 \leq j \leq d$. Subsequently, given a signal strength parameter $\varepsilon > 0$ and a sparsity parameter $s = d^{1-\alpha}$, where $0 \leq \alpha \leq 1$, we formalize the {\it sparse uniformity testing problem} through the following hypotheses: Given i.i.d. samples $X_1,\ldots,X_n$ from an unknown probability distribution $\bm p \in \cP([d])$ test 
	\begin{align}\label{eq:uniform_H} 
	H_0: \bm{p}= U([d]) \quad \text{versus} \quad H_1: \bm{p}\in \mathcal{P}(U[d],s,\varepsilon),
	\end{align} 
	where\footnote{For any vector $\bm x =(x_1, x_2, \ldots, x_d) \in \R^d$, $||\bm x||_1:=\sum_{j=1}^d |x_j|$ and $||\bm x||_2 := \sqrt{\sum_{j=1}^d x_j^2}$ denote the $L_1$ and $L_2$ norms of $\bm x$, respectively.} 
	$$\mathcal{P}(U([d]), s, \varepsilon)=\left\{ \bm{p}\in \cP([d]): \|\bm{p}-U([d])\|_0= s, \text{ and }\|\bm{p}-U([d])\|_1\geq \varepsilon \right\}.$$ 
	Note that $\mathcal{P}(U([d]),s,\varepsilon)$ is the collection of multinomial distributions which differ from the uniform distribution $U([d])$ in $s$-coordinates ($s$-sparse) and are $\varepsilon$-far from $U([d])$ in the $L_1$-distance. It is important to note that the testing problem \eqref{eq:uniform_H} makes sense only when the set $\mathcal{P}(U([d]),s,\varepsilon)$ non-empty. In fact, it is easy to find the maximum attainable value of $\varepsilon$ for which 
$\mathcal{P}(U([d]),s,\varepsilon)$ is non-empty. We summarize this in the following proposition for ease of referencing in the rest of the paper. The proof is given in Appendix \ref{sec:lm_pf}. (Throughout, unless stated otherwise, all asymptotic limits should be thought of as $d \rightarrow \infty$). 
 
 \begin{ppn}\label{ppn:L01} Fix a sparsity level $s=d^{1-\alpha}$. Then $$\lim \frac{d \varepsilon_{\max}}{s} = 2,$$
where $\varepsilon_{\max}:=\sup_{\bm{p}\in \cP([d]): \|\bm{p}-U([d])\|_0= s } ||\bm p - U([d])||_1$.  
\end{ppn}


Given i.i.d. samples $X_1, X_2, \ldots, X_n$ define its {\it histogram} $\bm Z= (Z_1, Z_2, \ldots, Z_d)$ as follows: 
$$Z_j= \sum_{i=1}^n \bm 1\{X_i=j\}.$$	
Note that $Z_j$ counts the number of occurrences of $j \in [d]$ in the sample $X_1, X_2, \ldots, X_n$, and $\bm Z= (Z_1, Z_2, \ldots, Z_d)$ is a sufficient statistic for the distribution $\bm p= (p_1, p_2, \ldots, p_d)$. Clearly, $\bm Z \sim \mathrm{Multi}(p_1, p_2, \ldots, p_d)$, the multinomial distribution with parameters $(p_1, p_2, \ldots, p_d)$. Now, to establish a decision theoretic formalism for the testing problem \eqref{eq:uniform_H} fix $\varepsilon > 0$ such that $\mathcal{P}(U([d]),s,\varepsilon)$ is non-empty. Then the worst case risk of a test function $T: \bm Z = (Z_1, Z_2, \ldots, Z_d) \rightarrow \{0,1\}$ is defined as: 
\begin{align}\label{eq:RT}
	\cR_{n, d}(T, s, \varepsilon):=\left\{ \mathbb{P}_{H_0}\left(T=1\right)+\sup_{\bm{p}\in \mathcal{P}(U([d]),s,\varepsilon)}\mathbb{P}_{\bm{p}}\left(T=0\right) \right\}, 
\end{align}	
	where 
\begin{itemize}	
\item[--] $\mathbb{P}_{H_0}$ denotes the probability under the null (which is the uniform distribution on $[d]$), where $\bm Z \sim \mathrm{Multi}(n/d, n/d, \ldots, n/d)$, and 

\item[--] $\mathbb{P}_{\bm{p}}$ denotes the probability distribution corresponding to  $\bm{p} = (p_1, p_2, \ldots, p_d) \in \mathcal{P}(U([d]),s,\varepsilon)$,  where $\bm Z=\mathrm{Multi}(p_1, p_2, \ldots, p_d)$. 
\end{itemize} 
Consequently, the minimax risk for the problem \eqref{eq:uniform_H} is given by: 
\begin{align}\label{eq:R}
	\cR_{n, d}(s, \varepsilon):=\inf_{T}\left\{ \mathbb{P}_{H_0}\left(T=1\right)+\sup_{\bm{p}\in \mathcal{P}(U([d]),s,\varepsilon)}\mathbb{P}_{\bm{p}}\left(T=0\right) \right\}, 
\end{align}	
where the infimum in \eqref{eq:R} is over all test functions $T: \bm Z \rightarrow \{0,1\}$. A sequence of test functions $T_n$ is said to be {\it asymptotically powerful} for the problem \eqref{eq:uniform_H} if $\lim \cR(T_n, n, d, s, \varepsilon) = 0$. On the other hand, a sequence of test functions $T_n$ is said to be {\it asymptotically powerless} for \eqref{eq:uniform_H} if $\lim \cR(T_n, n, d, s, \varepsilon) \geq 1$.   Given $n, d, s$, the {\it critical signal strength} is the signal strength $\varepsilon_0$ for which $\mathcal{R}_{n,d}(s,\varepsilon)\rightarrow 1$  for $\varepsilon\ll \varepsilon_0 $ and $\mathcal{R}_{n,d}(s,\varepsilon)\rightarrow 0$ for $\varepsilon\gg \varepsilon_0$ (see Section \ref{sec:asymptoticnotation} for the formal definitions of the asymptotic notations).

In this paper, we derive the asymptotic minimax detection boundary  for the uniformity testing problem, that is, the asymptotic behavior of the critical signal strength as a function of $n, d, s$, for all values of $\alpha \in (0, 1)$. The following is a summary of the results obtained: 

	\begin{enumerate}
		\item [(1)] In the {\it dense regime}, that is, $0 \leq \alpha\leq \frac{1}{2}$, our results reveal two regimes of asymptotic behavior of the testing problem \eqref{eq:uniform_H} depending on the relative magnitude of $n$ and $d$. In particular, we show that when $n\gtrsim d^{\frac{1}{2}+\alpha}$ then the problem behaves similar to a  Gaussian sparse mean testing problem, and a $\chi^2$-type test is asymptotically optimal. In contrast, when $n\ll d^{\frac{1}{2}+\alpha}$, the behavior of the problem is much more delicate. Here, we show a sharp impossibility result in the sense that all tests are asymptotically powerless if $\limsup d\varepsilon/s \leq 2$. The result is sharp since $\lim d\varepsilon_{\max}/s =2$ (by Proposition \ref{ppn:L01}), which means that in the regime $n\ll d^{\frac{1}{2}+\alpha}$ all tests are powerless for any achievable signal strength.  The results in this regime are formalized in Theorem \ref{thm:one_sample_alphaleqhalf}. 
						
		\item [(2)] In the {\it sparse regime}, that is, $\frac{1}{2} < \alpha < 1$, there is once again a two-stage phase transition in the behavior of the testing problem \eqref{eq:uniform_H} depending on the relative magnitude of $n$ and $d$. In this case, for $n\gg d\log^3{d}$, we derive the optimal detection boundary up to sharp constants. Here, a sequence of sharp optimal tests is constructed by Bonferroni corrections of a Higher Criticism (HC)-type test and two versions of maximum-type tests. In contrast, when $n\ll d\log{d}$, all tests are powerless irrespective of the signal strength. The results in this regime are formalized in Theorem \ref{thm:one_sample_alphageqhalf}. 
	\end{enumerate}
	
The rest of the paper is organized as follows. The main results are described in Section \ref{sec:main_results}. The outline of the proofs are discussed in Section \ref{sec:outline_pf} and numerical results are presented in Section \ref{sec:numerical_results}. The proofs of the main results are given in Section \ref{sec:main_results_pf}. 

	\section{Main Results}\label{sec:main_results} 
	
In this section we will formally state our main theorems. Toward this, we begin by recalling few standard asymptotic notations. 	

\subsection{Asymptotic Notations}
\label{sec:asymptoticnotation} 
For positive sequences $\{a_n\}_{n\geq 1}$ and $\{b_n\}_{n\geq 1}$, $a_n \lesssim b_n$ means $a_n \leq C_1 b_n$, $a_n \gtrsim b_n$ means $a_n \geq C_2 b_n$, and $a_n \asymp b_n$ means $C_2 b_n \leq a_n \leq C_1 b_n$, for all $n$ large enough and positive constants $C_1, C_2$. 
Moreover, subscripts in the above notation,  for example  $\lesssim_\square$,  denote that the hidden constants may depend on the subscripted parameters. 
Finally, for positive sequences $\{a_n\}_{n\geq 1}$ and $\{b_n\}_{n\geq 1}$, $a_n \ll b_n$ means $a_n =o(b_n)$ and $a_n \gg b_n$ means $b_n  = o(b_n)$.

\subsection{Statements of the Results}

As discussed above our results have the following two regimes: (1) the dense regime where $0 \leq \alpha \leq \frac{1}{2}$ and (2) the sparse regime where $\frac{1}{2} < \alpha \leq 1$. We begin with the dense regime. 
	
		\begin{thm}[Dense regime] Fix a signal strength $\varepsilon > 0$, a sparsity level $s=d^{1-\alpha}$, with $0 \leq \alpha\leq \frac{1}{2}$, and define 
		\begin{align}\label{eq:threshold_epsilon}
		\varepsilon_1(n, d, s):=\left(\frac{s}{n\sqrt{d}} \right)^{\frac{1}{2}}.
		\end{align} 

	\begin{enumerate}
				\item[$(1)$] Suppose $n\gtrsim d^{\frac{1}{2}+\alpha}$. Then the following hold: 
		\begin{enumerate}
			\item [(a)] There is a sequence of asymptotically powerful tests if $\varepsilon \gg \varepsilon_1(n, d, s)$.
			
%
	
			\item [(b)] On the other hand, all tests are asymptotically powerless if $\varepsilon \ll \varepsilon_1(n, d, s)$.
		\end{enumerate} 
		
	\item[$(2)$] Suppose $n\ll d^{\frac{1}{2}+\alpha}$. Then  all tests are asymptotically powerless if $\limsup \frac{d\varepsilon}{s}\leq 2$. As a consequence, by Proposition $\ref{ppn:L01}$, in this regime all tests are asymptotically powerless for any asymptotically achievable signal strength.
	\end{enumerate}
\label{thm:one_sample_alphaleqhalf}
\end{thm}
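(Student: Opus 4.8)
The plan is to handle the three assertions separately, since each calls for a genuinely different tool. For part $(1)(a)$ I would analyze a centered and normalized $\chi^2$-type statistic, namely
\[
T_{\chi^2} = \frac{1}{\sqrt{d}}\sum_{j=1}^d \frac{(Z_j - n/d)^2 - Z_j}{n/d},
\]
(or the closely related collision statistic). Under $H_0$ one checks that $T_{\chi^2}$ has mean $0$ and variance of constant order, while under any $\bm p \in \mathcal{P}(U([d]),s,\varepsilon)$ its mean grows like $\frac{n^2}{d\sqrt d}\,\|\bm p - U([d])\|_2^2$ and the variance stays controlled. The key arithmetic input is that for an $s$-sparse perturbation of magnitude $\|\bm p - U([d])\|_1 \geq \varepsilon$, Cauchy--Schwarz gives $\|\bm p - U([d])\|_2^2 \geq \varepsilon^2/s$; hence the signal-to-noise ratio is $\gtrsim \frac{n^2 \varepsilon^2}{d^2 s} / 1$, and this diverges precisely when $n\varepsilon^2 / (\sqrt d\, \cdot \sqrt d /\sqrt s)$, i.e. when $\varepsilon \gg \varepsilon_1(n,d,s)=(s/(n\sqrt d))^{1/2}$, provided $n \gtrsim d^{1/2+\alpha}$ so that the approximations (replacing the multinomial by independent Poissons, bounding the third and fourth moments) remain valid. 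Chebyshev on $T_{\chi^2}$ then drives the worst-case risk to $0$.

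For part $(1)(b)$ — the matching lower bound when $n\gtrsim d^{1/2+\alpha}$ — I would use Le Cam's two-point method with a carefully randomized alternative: pick a uniformly random subset $S\subset[d]$ of size $s$, split it into two halves, and put a perturbation $+\varepsilon/s$ on one half and $-\varepsilon/s$ on the other so that the simplex constraint is exactly respected and $\|\bm p-U\|_1 = \varepsilon$. One then bounds the $\chi^2$-divergence (or the squared $L_2$-distance) between $\mathbb{P}_{H_0}$ and the mixture $\overline{\mathbb{P}}_{H_1}$; the standard Ingster--Suslina expansion shows this divergence is controlled by $\exp$ of a quantity proportional to $\frac{n^2\varepsilon^4}{d\, s^2}$-type terms arising from pairs of coordinates in overlapping random supports, which is $o(1)$ exactly when $\varepsilon \ll \varepsilon_1(n,d,s)$. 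The bookkeeping of the overlap combinatorics and confirming that, in the regime $n\gtrsim d^{1/2+\alpha}$, the leading term is the $L_2$ term (and not a higher-order moment term) is where care is needed.

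The most interesting — and, I expect, the hardest — piece is part $(2)$: when $n\ll d^{1/2+\alpha}$, all tests are powerless whenever $\limsup d\varepsilon/s \le 2$. Since by Proposition \ref{ppn:L01} the extreme alternative has $d\varepsilon_{\max}/s \to 2$, the natural least favorable prior concentrates all the mass of the perturbation on a random $s$-subset, pushing each of those coordinates essentially to $2/d$ (doubling) and each of the remaining $d-s$ coordinates down to $(1 - s/d)/(d-s) \approx 1/d$ to preserve the simplex constraint. Again I would form the mixture $\overline{\mathbb{P}}_{H_1}$ over the random support and bound $\chi^2(\overline{\mathbb{P}}_{H_1}\,\|\,\mathbb{P}_{H_0})$; the Ingster--Suslina formula turns this into $\se\!\left[\prod_j (1 + \delta_j \delta_j')^{?}\right]$-type expressions over two independent copies of the support, which after the Poissonization reduce to estimating $\se\, e^{\lambda |S\cap S'|}$ for $|S\cap S'|$ hypergeometric with parameters $(d,s,s)$, with $\lambda$ of order $n/d$ (coming from the per-coordinate Poisson mean $n/d$ and the doubling). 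The condition $n\ll d^{1/2+\alpha}$, i.e. $n \ll d\,\sqrt{s/d}\cdot\sqrt{d}/\dots$ — more precisely $n s^2 / d^2 \to 0$ combined with $s = d^{1-\alpha}$ — is exactly what forces this moment generating function to stay $1+o(1)$, so that the total variation distance between $\mathbb{P}_{H_0}$ and $\overline{\mathbb{P}}_{H_1}$ vanishes and hence $\mathcal{R}_{n,d}(s,\varepsilon)\to 1$. The delicate point is that the perturbation here is \emph{not} small — each active coordinate changes by a factor of $2$ — so the usual "small-perturbation" linearization of $\log(1+x)\approx x$ is not automatically justified; one has to keep the exact $\log(p_j d)$ terms and exploit that the number of active coordinates $s$ is small relative to $d$ (this is where $\alpha>0$, equivalently $s = o(d)$, enters), together with the sample-size ceiling, to control the higher-order contributions. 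I would organize this as: (i) Poissonize; (ii) write the likelihood ratio and the mixture; (iii) apply Ingster--Suslina to get the second moment as a hypergeometric MGF; (iv) bound that MGF under $n\ll d^{1/2+\alpha}$; (v) conclude via the second-moment / total-variation bound $\mathcal{R}_{n,d}(s,\varepsilon) \ge 1 - \tfrac12\sqrt{\chi^2(\overline{\mathbb{P}}_{H_1}\,\|\,\mathbb{P}_{H_0})}$.
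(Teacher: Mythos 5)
Your parts (1)(a) and (1)(b) follow essentially the paper's route: the statistic $\sum_j\{(Z_j-n/d)^2-Z_j\}$ plus Chebyshev for the upper bound, and a randomly signed $\pm\varepsilon/s$ perturbation prior with the second-moment (Ingster--Suslina) method for the lower bound (the paper pairs coordinates $\{2i-1,2i\}$ so the likelihood factors over pairs; your split-a-random-$s$-set variant is an acceptable cosmetic change). Do fix the arithmetic: the exponent controlling $\mathbb{E}_{H_0}[L_\pi^2]$ is of order $n^2 d\,\varepsilon^4/s^2$, not $n^2\varepsilon^4/(d s^2)$ --- only the former is $o(1)$ exactly when $\varepsilon\ll\varepsilon_1(n,d,s)$ --- and similarly the signal-to-noise computation in (1)(a) has misplaced powers of $d$, though the stated conclusion is right.

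Part (2) has two genuine gaps. First, the prior: doubling $s$ coordinates to $2/d$ and compensating across the remaining $d-s$ coordinates perturbs \emph{every} coordinate, so $\|\bm p-U([d])\|_0=d$ and the mixture is not supported on $\mathcal{P}(U([d]),s,\varepsilon)$ (as written it also fails to sum to $1$). The extremal $s$-sparse configuration has the opposite shape: since $\|\bm p-U([d])\|_1=2\sum_{\Delta_j>0}\Delta_j$ with each positive $\Delta_j<1/d$, approaching $2s/d$ forces roughly $s-1$ coordinates down to (near) zero with the excess mass $\approx s/d$ piled onto the few remaining active coordinates. Second, and more seriously, your quantitative condition is wrong in the dense regime: with a per-overlap-coordinate exponent $\lambda\asymp n/d$, the hypergeometric MGF gives $\exp(O(ns^2/d^2))$, and $ns^2/d^2=n/d^{2\alpha}$ is \emph{not} $o(1)$ under $n\ll d^{1/2+\alpha}$ when $\alpha<\tfrac12$ (at $\alpha=0$ it equals $n\to\infty$). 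The quantity that must be small is $n^2s^2/d^3=n^2/d^{1+2\alpha}$, which is $o(1)$ precisely iff $n\ll d^{1/2+\alpha}$; the extra factor of $n/d$ comes from cancellation of the first-order cross term, and that cancellation occurs only if the up/down assignment on the active coordinates is randomized independently across coordinates, so that the centered quantities $\mathbb{E}[\bar\eta_j\bar\eta_j']$ --- rather than the raw $O(n/d)$ exponents --- drive the bound. A deterministic pattern on a random support has no such cancellation, so your argument would only yield impossibility for $n\ll d^{2\alpha}$, strictly weaker than claimed for all $\alpha<\tfrac12$. This is exactly why the paper abandons the ``natural'' extremal prior (whose without-replacement dependencies it also flags as intractable) in favor of independent Bernoulli$(\delta)$ up/down choices on a subset of the first half of $[d]$, a balancing adjustment in the second half driven by the realized deficit $\Delta(\bm\eta)$, a good event controlling $|\Delta(\bm\eta)|$, and a decomposition of $L_{\pi_n}$ into contributions on that event and its complement. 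Some version of that randomize-and-cancel mechanism is indispensable, not an optional refinement.
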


The proof of this theorem is given in Section \ref{sec:main_results_pf} (an outline of the proof is given in Section \ref{thm:one_sample_alphaleqhalf}). Note that, as alluded to in the Introduction, the problem undergoes two phase transitions: If the sample size $n \ll d^{\frac{1}{2}+\alpha}$, then Theorem \ref{thm:one_sample_alphaleqhalf}~(2) shows that testing is impossible regardless of signal strength. On the other hand, for $n \gtrsim d^{\frac{1}{2}+\alpha}$, then there is a further threshold for the critical signal strength (given by \eqref{eq:threshold_epsilon}), below which testing is impossible and above which a  $\chi^2$-type test is optimal. While the latter parallels results on sparse testing in the Gaussian sequence model  (see \citep{donoho2004higher,tony2011optimal,arias2011global,ingster2010detection} and the references therein), the former regime where testing is impossible is a new phenomenon, which emerges from the constraint that the parameters of a multinomial belong to the simplex. 


Next, we consider the sparse regime $\frac{1}{2} < \alpha < 1$. To this end, define the function: 
 \begin{align}\label{eq:threshold_III}
 C(\alpha) := 
		\left\{\begin{array}{cc}
		(\alpha-\frac{1}{2})  &   \text{ if } \frac{1}{2} < \alpha < \frac{3}{4} ,   \\
		(1 - \sqrt{1-\alpha})^2  &    \alpha \geq \frac{3}{4} .
		\end{array}
		\right. 
		\end{align}
 The following result summarizes the minimax detection boundaries for the uniformity testing problem in the sparse regime.

\begin{thm}[Sparse regime] \label{thm:one_sample_alphageqhalf} 
 Fix a signal strength $\varepsilon > 0$, a sparsity level $s=d^{1-\alpha}$, with $\frac{1}{2} < \alpha < 1$, and define 
 \begin{align}\label{eq:threshold_II}
		\varepsilon_2(n, d, s):= s \left(\frac{2 \log{d}}{nd} \right)^{\frac{1}{2}}.
		\end{align} 

	\begin{enumerate}
		
		\item[$(1)$] Suppose $n\gg d\log^3{d}$. Then the following hold: 
		\begin{enumerate}
			\item [(a)] There exists a sequence of asymptotically powerful tests if
			\begin{align*}
			\liminf \frac{\varepsilon}{\varepsilon_2(n, d, s)}> \sqrt{C(\alpha)},
			\end{align*}
			where $C(\alpha)$ is as defined in \eqref{eq:threshold_II}. 
			
			\item [(b)] All tests are asymptotically powerless if
				\begin{align*}
			\limsup \frac{\varepsilon}{\varepsilon_2(n, d, s)}<  \sqrt{C(\alpha)}.
			\end{align*} 
		\end{enumerate}

\item[$(2)$] Suppose $n\ll d\log{d}$. Then  all tests are asymptotically powerless if $\limsup \frac{d\varepsilon}{s}\leq 2$. As a consequence, by Proposition $\ref{ppn:L01}$, in this regime all tests are asymptotically powerless for any asymptotically achievable signal strength.
		
	\end{enumerate} 
\end{thm}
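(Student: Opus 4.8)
The plan is to reduce the sparse uniformity testing problem to a sparse Gaussian sequence model and then import (a suitably adapted form of) the Ingster--Donoho--Jin detection theory, the hypotheses $n\gg d\log^3 d$ and $n\ll d\log d$ governing whether that reduction is legitimate. First I would \emph{Poissonize}: replacing $\bm Z\sim\mathrm{Multi}(np_1,\dots,np_d)$ by independent $Z_j\sim\mathrm{Poisson}(np_j)$ changes the minimax risk \eqref{eq:R} by $o(1)$ (a standard argument via an independent $\mathrm{Poisson}(n)$ sample size). Under $H_0$ each $Z_j$ has rate $\lambda:=n/d$, while under $H_1$ exactly $s$ of the rates $\lambda_j=np_j$ differ from $\lambda$. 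When $n\gg d\log^3 d$ one has $\lambda\gg(\log d)^3$, so a moderate-deviation (Edgeworth) expansion couples each $Z_j$ with a Gaussian: writing $Y_j:=(Z_j-\lambda)/\sqrt\lambda$, the law of $(Y_j)$ is within $o(1)$ total variation of $N(\bm\theta,I_d)$, where $\theta_j=(p_j-1/d)\sqrt{nd}$, and the coupling is uniform over the tail range $|y|\lesssim\sqrt{2\log d}$ that the tests below probe (the $(\log d)^{3/2}/\sqrt\lambda$ third-cumulant error is exactly what forces the exponent $3$). In this Gaussian model $\bm\theta$ is $s$-sparse, has $\sum_j\theta_j=0$ (the simplex constraint), and $\|\bm\theta\|_1=\sqrt{nd}\,\|\bm p-U([d])\|_1\ge\sqrt{nd}\,\varepsilon$, which at $\varepsilon=\varepsilon_2(n,d,s)$ (see \eqref{eq:threshold_II}) reads $\|\bm\theta\|_1\ge s\sqrt{2\log d}$; setting $r:=(\varepsilon/\varepsilon_2)^2$, the function $C(\alpha)$ of \eqref{eq:threshold_III} is precisely the Ingster--Donoho--Jin detection exponent for $s=d^{1-\alpha}$ signals among $d$ coordinates.

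For the achievability part $(1)(a)$ I would take a Bonferroni combination of three level-$o(1)$ tests: a Higher Criticism statistic built from the one-sided $p$-values of the $Y_j$, together with the two one-sided maxima $\max_j Y_j$ and $\max_j(-Y_j)$ compared against $\sqrt{2\log d}\,(1+o(1))$; both maxima are needed because the simplex constraint forces both positive and negative perturbations. The substance is a worst-case analysis over $\mathcal P(U([d]),s,\varepsilon)$. To evade the maximum-type tests, an adversary must keep the per-coordinate magnitudes below $\sqrt{2\log d}$ (a more concentrated allocation is detected outright), which forces the positive and negative halves of the $\ell_1$-budget onto $c_+ s$ and $c_- s$ coordinates respectively with $c_++c_-\le 1$ and per-coordinate magnitude $\asymp\sqrt r\sqrt{2\log d}/(2c_\pm)$; since the maximum over $\asymp s$ near-threshold coordinates is boosted by $\sqrt{2\log s}=\sqrt{1-\alpha}\sqrt{2\log d}$, both maxima stay below $\sqrt{2\log d}$ only if $\tfrac{\sqrt r}{2c_\pm}+\sqrt{1-\alpha}<1$ for both signs, which by $c_++c_-\le 1$ is possible only when $r<(1-\sqrt{1-\alpha})^2$; hence the two maxima already attain the boundary for $\alpha\ge\tfrac34$. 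For $\tfrac12<\alpha<\tfrac34$, where $C(\alpha)=\alpha-\tfrac12<(1-\sqrt{1-\alpha})^2$, a standard second-moment analysis of the empirical process shows that the Higher Criticism statistic diverges under every remaining (necessarily $\Theta(s)$-spread) alternative once $r>\alpha-\tfrac12$, the least favourable allocation being the equal-magnitude one. Transferring this back to the multinomial model is where $n\gg d\log^3 d$ enters: the HC test operates at tail level $\asymp 1/d$, i.e.\ at $\asymp\sqrt{2\log d}$ standard deviations, exactly the range on which the Gaussian coupling is valid.

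For the impossibility part $(1)(b)$ I would choose a least-favourable prior and bound an $f$-divergence between $P_{H_0}$ and the mixture. Draw $S\subseteq[d]$ uniformly with $|S|=s$ ($s$ even), pair its elements at random, and on each pair put $(+\mu,-\mu)$ with $\mu=\sqrt{2r\log d}$ in a random order; lifting back through $\theta_j=(p_j-1/d)\sqrt{nd}$ this is a genuine member of $\mathcal P(U([d]),s,\varepsilon)$ with $\|\bm p-U([d])\|_1=\varepsilon$, the pairing being what enforces $\sum_j\theta_j=0$. One then runs the Ingster--Donoho--Jin second-moment computation: up to a negligible pairing correction the $\chi^2$-divergence equals $\mathbb E_{S,S'}\exp(\langle\bm\theta_S,\bm\theta_{S'}\rangle)-1$, whose dominant term $\asymp(s^2/d)e^{\mu^2}=d^{1-2\alpha+2r}$ is bounded when $r<\alpha-\tfrac12$, but for $\alpha\ge\tfrac34$ one must first condition on $\max_j|Y_j|\le\sqrt{2\log d}$ (the standard truncation) before the second moment is controlled, which yields the refined threshold $(1-\sqrt{1-\alpha})^2$. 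I expect this step --- carrying the delicate \emph{truncated} second moment through the Poisson/Gaussian coupling so that truncation and approximation error do not interfere --- to be the main obstacle; its mirror image, the precise Poisson moderate-deviation bounds needed for the HC test in $(1)(a)$, is a close second.

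Finally, for part $(2)$, when $n\ll d\log d$ the Poisson rate satisfies $\lambda=n/d\ll\log d$ and the Gaussian reduction is unavailable; here I would again use a prior, now on strongly unbalanced sparse perturbations, for $\varepsilon$ in the achievable range (Proposition~\ref{ppn:L01}). Let $\pi$ draw a uniform $S\subseteq[d]$ with $|S|=s$, push a uniform sub-collection of $\approx d\varepsilon/2$ of its coordinates down to $p_j=0$, and spread the conserved mass uniformly over the other $\approx s-d\varepsilon/2$ coordinates, so that $\|\bm p-U([d])\|_1\ge\varepsilon$ and every nonzero rate $\lambda_j=np_j$ is either $0$ or of order $\lambda$. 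A direct computation then gives $\mathbb E_{H_0}[L^2]\le\exp\!\big((s^2/d)\,e^{O(\lambda)}\big)$, where $L$ is the likelihood ratio of the $\pi$-mixture against $H_0$: the per-coordinate cross second moments $\mathbb E_{H_0}[\mathrm{LR}_j\,\mathrm{LR}'_j]$ are all of order $e^{O(\lambda)}$ (for instance $e^\lambda$ when a coordinate is zeroed by both draws, since a zeroed cell is empty with $H_0$-probability $e^{-\lambda}$), and $\mathbb E|S\cap S'|\asymp s^2/d$. This is $1+o(1)$ precisely when $\lambda\ll\log(d/s^2)=(2\alpha-1)\log d$, i.e.\ when $n\ll d\log d$. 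The picture is that for $\lambda\ll\log d$ there are already $\asymp de^{-\lambda}\gg s$ empty cells under $H_0$, so the $\asymp s$ newly emptied cells are invisible, while the mildly boosted cells lie below the reach of both a maximum-type and an HC-type statistic.
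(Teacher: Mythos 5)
Your overall architecture matches the paper's: Poissonization; for $(1)(a)$ a Bonferroni combination of an HC statistic with two maximum-type tests, with the extremal (equal-magnitude) allocation argument identifying $C(\alpha)$; for $(1)(b)$ a balanced $\pm\eta$ sparse prior analyzed by a \emph{truncated} second moment (truncating on $\max_j|D_j|\le\sqrt{2\log d}$, exactly as in the paper); and for $(2)$ a prior that zeroes out most of the perturbed support, with the second moment controlled by $\exp\{(s^2/d)e^{O(n/d)}\}$, which is the paper's mechanism as well. Your back-of-envelope constants ($r<(1-\sqrt{1-\alpha})^2$ from the two maxima, $r<\alpha-\tfrac12$ from the untruncated $\chi^2$-divergence, $\lambda\ll(2\alpha-1)\log d$ in part (2)) all agree with the paper's.

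There is, however, one claim that is false as stated and cannot be the load-bearing step: the law of $(Y_1,\dots,Y_d)$ is \emph{not} within $o(1)$ total variation of $N(\bm\theta,I_d)$ under $n\gg d\log^3 d$. By Hellinger tensorization the joint discrepancy is of order $\sqrt{d/\lambda}=d/\sqrt{n}$, so an honest TV reduction would require $n\gg d^2$; for $d\log^3 d\ll n\ll d^2$ no such coupling exists, and the Ingster--Donoho--Jin theory cannot be imported wholesale. What \emph{is} available (and is what the paper uses, and what your own parenthetical about the third cumulant points to) is a coordinatewise Cram\'er-type moderate-deviation equivalence $\P(D_j\ge t)=(1+o(1))\bar\Phi(t)$ uniformly for $t\lesssim\sqrt{\log d}$, valid precisely because $(\log d)^{3/2}\ll\sqrt{n/d}$. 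This suffices for the max and HC statistics (which only see empirical tail counts) and for the truncated second moment, but it forces you to redo both the HC variational analysis and the lower-bound likelihood computation directly in the Poisson model (the paper's Lemmas on $GHC(t_r)$, the function $F_r$, and the tilted-Poisson estimate of $\P_{Z\sim\mathrm{Pois}(ndp_j^2)}(|D_j|\le\sqrt{2\log d})$) rather than quoting the Gaussian results. A secondary point on part $(2)$: your prior conserves mass exactly by sampling the zeroed sub-collection \emph{without replacement} inside $S$, which reintroduces the dependence the paper deliberately avoids by using i.i.d.\ Bernoulli assignments plus a small compensating perturbation in the other half of the domain; your cross-moment product over $S\cap S'$ then involves non-monotone functionals of negatively associated indicators, so the factorization $\E[\prod f_j]\le\prod\E[f_j]$ is not automatic and needs a multivariate hypergeometric-vs-multinomial domination argument. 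Also, both your construction and the paper's really cover $d\varepsilon/s$ bounded away from $2$ (otherwise the few boosted cells acquire rates $\gg\lambda$ and become detectable), so neither handles the literal boundary $\limsup d\varepsilon/s=2$; this is a defect shared with the paper, not one you introduced.
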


The proof of this theorem is given in Section \ref{sec:main_results_pf} (an outline of the proof is given in Section \ref{thm:one_sample_alphageqhalf}). Similar to the dense regime, once again there is a two-fold phase transition. For small sample sizes ($n \ll d\log{d}$ in this case) testing is impossible regardless of the signal strength (Theorem \ref{thm:one_sample_alphageqhalf} (2)). On the other hand, for larger sample sizes ($n \gg d\log^3{d}$) the critical signal strength obtained in Theorem \ref{thm:one_sample_alphageqhalf} (1) is sharp up to the precise multiplicative constant. The test achieving this sharp optimality turns out to be a Bonferroni correction between 
two different maximum-type tests and a HC-type test. For other examples of HC-type tests in analogous sparse high-dimensional testing problems, see \citep{donoho2004higher,arias2011global,arias2015sparse,cai2014optimal,cai2016global,mukherjee2015hypothesis,mukherjee2018detection} and the references therein.

\begin{remark}[Closing the log-gap]\label{remark:log_gap} Note that there is a gap in a logarithmic factor (in the dimension $d$) between the upper and lower bounds on the sample size $n$ in Theorem \ref{thm:one_sample_alphageqhalf} (1) and Theorem \ref{thm:one_sample_alphageqhalf} (2), respectively. This is a technicality arising from our proof where we need the assumption $n\gg d\log^3 d$ to invoke a Cramer-type moderate deviation estimate for Poisson random variables. This estimate is required to establish the critical signal strength upto sharp constants as in Theorem \ref{thm:one_sample_alphageqhalf} (1). However, to derive only rate optimal results (that is, showing  consistent testing is possible if and only if $\varepsilon\gtrsim \varepsilon_2(n,d,s)$), this log-gap can be removed and $n\gg d\log{d}$ suffices. In this case, it can be shown that the upper bound is achieved by rejecting for large values of a maximum-type test (for example, the test based on the statistic $M_{n,d}$ defined in \eqref{eq:Mnd} and implemented as in Lemma \ref{lm:threshold_I} suffices) and the lower bound can be obtained by arguments similar to the proof of Theorem \ref{thm:one_sample_alphaleqhalf}. Moreover, to obtain only a rate optimal result one can avoid the construction of the modified prior and the truncated second moment method employed in the proof of the lower bound in Theorem \ref{thm:one_sample_alphageqhalf} (1) to obtain the sharp constant  (see Section \ref{sec:outline_pf} for details). 
\end{remark}

One immediate consequence of our results is that we recover the classical result of Paninsky \cite{paninski2008coincidence}, where the uniformity testing problem was studied without sparsity restrictions. In fact, we obtain more a granular exploration of the results in \cite{paninski2008coincidence}, owing our refined analyses based on different levels of sparsity. We elaborate on this in the following remark:

\begin{remark}(Implications to the classical uniformity testing problem) Paninsky \cite{paninski2008coincidence} derived the minimax separation rates for uniformity testing without assuming any sparsity. In particular, \cite{paninski2008coincidence} showed that without any sparsity constraints (that is, over the class $\bigcup_{s=1}^d \mathcal{P}(U([d]),s,\varepsilon)$) the minimax detection threshold for uniformly testing is 
\begin{align}\label{eq:thresholduniform}
\varepsilon_{\mathrm{unif}}(n, d) = \frac{d^{\frac{1}{4}}}{\sqrt n} . 
\end{align} 
Consequently, it is natural to compare this result with the worst case of the separations obtained in this paper. A simple comparison of the rates between the dense ($\alpha\leq \frac{1}{2}$) and sparse  ($\alpha> \frac{1}{2}$) cases from Theorem \ref{thm:one_sample_alphaleqhalf} and Theorem \ref{thm:one_sample_alphageqhalf} implies that the worst case between the two regimes is indeed obtained at $\alpha=0$ (the extreme of the dense case). Consequently, we clearly recover the threshold in \eqref{eq:thresholduniform} from \eqref{eq:threshold_epsilon} by substituting $\alpha = 0$. However, our results a say bit more when $n \lesssim d^{\frac{1}{2}+\alpha} = \sqrt d$. Note that in this regime $\varepsilon_{\mathrm{unif}}(n, d) \gtrsim 1$ whereas $\varepsilon_{\mathrm{max}} = 2$, therefore the results in \cite{paninski2008coincidence} are inapplicable when the signal strength $\varepsilon \asymp 1$ and $\varepsilon_{\mathrm{unif}}(n, d) \asymp 1$. On the other hand, our results show that for $n \asymp \sqrt d$ all tests is powerless for any fixed $\varepsilon \in (0, 2)$. Moreover, our result characterizes the precise regions of impossibility for every level of sparsity $s$, which uncover levels of impossibility based on gradations of the ambient dimensions of the problem. In particular, it is revealed that the minimum sample size required (as a function of $d$) for any possibility of consistent detection decreases as $s$ increases. 
\end{remark}


Finally, it is worth mentioning the recent of results of Donoho and Kipnis \cite{kipnis2019higher,donoho2020two,kipnis2021inability,kipnis2021unification} where the authors provide the precise asymptotics of a HC-type test for a sparse version of the two sample problem for testing the equality of high dimensional discrete distributions. This  series of papers sets the tone and motivations for the asymptotically exact results considered here for the uniformity testing problem. However, since we consider only the uniformity testing problem, in contrast to \cite{kipnis2019higher,donoho2020two,kipnis2021inability,kipnis2021unification} we not only provide sharp analysis of tests but also demonstrate precise information theoretic lower bounds up to to the correct asymptotic multiplicative constant. Moreover, \cite{kipnis2019higher,donoho2020two,kipnis2021inability,kipnis2021unification} only considered a modified version of the problem through a rare-weak  Poisson mixture model, whereas we considered the actual minimax setting with multinomial experiments. This in turn restricts the parameters $\bm p= (p_1, p_2, \ldots, p_d)$ to belong to the $d$-dimensional simplex and allows us to recover sharp impossibility regimes where no signal is detectable asymptotically.   As mentioned before, we believe that this simplex constraint creates the extra phase transition of pathological detection. Finally, it is not too difficult to obtain a part of the rate optimal results for the two sample problem under sparsity using a reduction to the one-sample problem considered here. However, since we focus on precise asymptotics whenever achievable, we leave this for future endeavors.

\section{Proof Outline}\label{sec:outline_pf}

In this section we introduce the relevant test statistics and sketch the key ideas involved in the proofs of Theorems \ref{thm:one_sample_alphaleqhalf} and \ref{thm:one_sample_alphageqhalf}. The first step towards this is to consider the {\it Poisson sampling} scheme where instead of drawing $n$ independent samples  from  a  distribution $\bm p \in \cP([d])$, we first choose $N \sim \dPois(n)$ and then draw $N$ i.i.d samples $X_1, X_2, \ldots, X_N$ from $\bm p = (p_1, p_2, \ldots, p_d)$. This model has the advantage that  
the frequencies $Z_1, Z_2, \ldots, Z_d$, where 
\begin{align}\label{eq:ZN}
Z_j= \sum_{i=1}^N \bm 1\{X_i=j\},
\end{align} are independent with $Z_j \sim \dPois(n p_j)$, for $j \in [d]$. Analogous to \eqref{eq:RT} we define the risk of a test function $T: \bm Z=(Z_1,\ldots,Z_d)\rightarrow \{0,1\}$ in the Poisson sampling scheme as: 
\begin{align}\label{eq:R_poisson}
	\overline{\cR}_{n, d}(T,  s, \varepsilon)= \mathbb{P}_{H_0}\left(T=1\right)+\sup_{\bm{p}\in \mathcal{P}(U([d]),s,\varepsilon)}\mathbb{P}_{\bm{p}}\left(T=0\right) , 
\end{align}	
where $Z_1, Z_2, \ldots, Z_d$ are independent and, for $j \in [d]$, $Z_j \sim \dPois(n/d)$ under $H_0$ and $Z_j \sim \dPois(n p_j)$, for $\bm p \in \mathcal{P}(U([d]),s,\varepsilon )$. Therefore, the minimax risk under the Poisson sampling scheme is:  
\begin{align}\label{eq:R_poisson}
	\overline{\cR}_{n, d}(s, \varepsilon)=\inf_{T}\left\{ \mathbb{P}_{H_0}\left(T=1\right)+\sup_{\bm{p}\in \mathcal{P}(U([d]),s,\varepsilon)}\mathbb{P}_{\bm{p}}\left(T=0\right) \right\}, 
\end{align}	
where the infimum in \eqref{eq:R_poisson} is over all test functions $T: \bm Z=(Z_1,\ldots,Z_d)\rightarrow \{0,1\}$. Using the exponential concentration of  a Poisson distribution around its mean  it can be easily shown that 
$$\cR_{n, d}(s, \varepsilon)=(1+o(1))\overline{\cR}_{n, d}(s, \varepsilon),$$
where $\cR_{n, d}$ is as defined in \eqref{eq:R} (see, for example, \cite[Section A]{wu2016minimax}). This allows us to derive the results in Theorems \ref{thm:one_sample_alphaleqhalf} and \ref{thm:one_sample_alphageqhalf} from the analogous result in the Poisson model. In light of this observation, we will hereafter work in the Poissonized setup.

\subsection{Proof Outline for Theorem \ref{thm:one_sample_alphaleqhalf}}
\label{sec:alphaleqhalfoutline}

Recall that Theorem \ref{thm:one_sample_alphaleqhalf} deals with the dense regime, $0 \leq \alpha\leq \frac{1}{2}$, where the detection boundary depends on whether the sample size $n\gtrsim d^{\frac{1}{2}+\alpha}$ or $n\ll d^{\frac{1}{2}+\alpha}$.

We begin with the case $n\gtrsim d^{\frac{1}{2}+\alpha}$. Here, since the number of samples is `large' enough compared to the dimension and dense perturbations from uniformity are allowed, the problem mimics the classical  (fixed dimension, large sample size) paradigm. Consequently, it is possible to construct a $\chi^2$-type test which attains the minimax detection boundary. In particular, assuming the Poisson sampling model and $Z_1, Z_2, \ldots, Z_d$ as in \eqref{eq:ZN}, we consider the following statistic: 
\begin{align}\label{eq:Tn}
T_n=\sum_{j=1}^d \left\{ \left(Z_j- \frac{n}{d} \right)^2-Z_j \right\}. 
\end{align}
It is easy to check that $T_n/n^2$ is an unbiased estimate of $||\bm p - U([d])||_2^2$ (see Lemma \ref{lm:Tn}). A variance calculation followed by an application of the Chebyshev's  inequality then shows that  the test which rejects  the null hypothesis whenever $T_n$ is `large', is asymptotically powerful for the problem \eqref{eq:uniform_H} whenever $\varepsilon \gg \varepsilon_1(n, d, s)$, where $\varepsilon_1(n, d, s)$ is as defined in \eqref{eq:threshold_epsilon} (see Lemmas \ref{lm:Tn} and \ref{lm:threshold_I} for details).  This establishes the desired lower bound on the signal strength (upper bound on the minimax risk). To show that all tests are powerless below this signal strength we will need to lower bound the minimax risk. 
Towards this, for any test function $T$ and any prior $\pi$ on $\cP(U[d], s, \varepsilon)$ (the alternative space $H_1$), define the Bayes risk of $T$ under the prior $\pi$ as 
 \begin{align*}
	\overline{\cR}_{n, d}(T, s, \varepsilon, \pi)= \mathbb{P}_{H_0}\left(T=1\right)+ \E_{\pi} \left[\mathbb{P}_{\bm{p}}\left(T=0\right) \right],
\end{align*}	
where the expectation is taken over the randomness of $\bm p \sim \pi$. Then defining 
\begin{align}\label{eq:L_pi}
L_{\pi} = \E_{\pi} \left[ \frac{\P_{\bm p}(Z_1, Z_2, \ldots, Z_d)}{\P_{H_0}(Z_1, Z_2, \ldots, Z_d)} \right] = \E_{\pi} \left[ \prod_{j=1}^d e^{- n \left( p_j - \frac{1}{d} \right) } \left(d p_j\right)^{Z_j} \right], 
\end{align} 
as the $\pi$-integrated likelihood ratio, the minimax risk of any test function $T$ can be bounded below as follows: 
\begin{align}\label{eq:R_lb}
\overline{\cR}_{n, d}(T,  s, \varepsilon) \geq \overline{\cR}_{n, d}(T, s, \varepsilon, \pi) 
& \geq 1-\tfrac{1}{2} \E_{H_0}|L_{\pi}-1| \nonumber \\
& \geq 1- \tfrac{1}{2} \sqrt{\E_{H_0}[L_{\pi}^2]-1},
\end{align}
where the last step uses the Cauchy-Schwarz inequality. Therefore, to show that no tests are powerful for $\varepsilon \ll \varepsilon_1(n, d, s)$ it suffices to prove $\E_{H_0}[L_{\pi_n}^2]= 1 + o(1)$, for an appropriately chosen sequence of priors $\pi_n$ on $\cP(U[d], s, \varepsilon)$.  
In this case, the prior $\pi_n$ is constructed as follows: We first decompose the support $[d]$ into $d/2$ consecutive pairs $\{\{1, 2\}, \{3, 4\}, \ldots, \{d-1, d\}\}$, then chose $s/2$ pairs from these $d/2$ pairs uniformly at random, and within each chosen pair we increase a  randomly chosen coordinate by $\varepsilon/s$ and decrease the other by $\varepsilon/s$. In  Lemma \ref{lm:threshold_lb_I} we show that for this prior $\E_{H_0}[L_{\pi_n}^2] = 1 +o(1)$ whenever $\varepsilon \ll \varepsilon_1(n, d, s)$.

Next, we consider the case $n \ll d^{\frac{1}{2}+\alpha}$. In this regime, there are `too few' samples and, as a consequence, all tests are asymptotically powerless for any asymptotically achievable signal strength. To show this we observe from the proof of Proposition \ref{ppn:L01} that the maximum asymptotically achievable signal strength is attained by increasing one coordinate of the domain $[d]$ to $s/d$ and decreasing $s-1$ coordinates to zero. Therefore, a natural least favorable prior in this regime would be to randomly choose $s$ coordinates from the domain uniformly at random, increase the value one of these randomly chosen coordinates to $s/d$ and decrease the values of the remaining $s-1$ coordinates to zero (or making it arbitrarily small so that the likelihood ratio is well-defined). However, the second moment corresponding to this prior is difficult to analyze because of the dependencies introduced by the sampling without replacement scheme. To circumvent this issue, we fix $0 < \delta < 1$ arbitrarily small and first chose a subset $S$ of size $(1-\delta) s$ from the first half of the domain $\{1, 2, \ldots, d/2\}$ uniformly at random. Then depending on the outcome of $(1-\delta) s$ independent coin tosses with success probability $\delta$, we either increase the value of a coordinate in $S$ to $s/d$ or decrease it to arbitrarily close to zero. Finally, to ensure that the resulting vector belongs to $\cP(U[d], s, \varepsilon)$ with high probability, we make necessary adjustments in the second half of the domain $\{ d/2+1, \ldots, d\}$, depending upon the magnitude of the deficit incurred in the first half. The independent sampling in the first half of the domain simplifies the analysis, however, the calculations are still delicate because of the interactions between the first and the second halves of the domain elements in the second moment of the likelihood. Details of the calculation are given in Proposition \ref{ppn:threshold_lb_II} which also includes the impossibility regime in the sparse case (Theorem \ref{thm:one_sample_alphageqhalf} (2)).

\subsection{Proof Outline for Theorem \ref{thm:one_sample_alphageqhalf}}
\label{sec:alphageqhalfoutline}

This theorem deals with the sparse regime, $\frac{1}{2} < \alpha < 1$, where the detection boundary depends on whether the sample size $n\gg d \log d$ or $n\ll d \log ^3 d$. As mentioned before, the regime $n\ll d \log d$, where all tests are powerless irrespective of the signal strength, can be handled by the same arguments as the analogous regime in the dense case (Theorem \ref{thm:one_sample_alphaleqhalf} (2)). Therefore, it suffices to discuss the case $n\gg d \log^3 d$. Here our test relies on a combination of three tests as follows: First, a Bonferroni correction two maximum-type tests, namely those rejecting for large values of 
$$\max_{j \in [d]} Z_j \quad \text{ and } \quad M_{n,d}=\max_{j \in [d]} \left\vert\frac{Z_j - n/d}{\sqrt{n/d}}\right\vert,$$
attains the minimax detection boundary upto large constants. Then to obtain the correct constant in the detection threshold, we need to consider a further Bonferroni correction with a properly calibrated HC statistic. \textcolor{black}{This HC test rejects  for large values of $$\sup_{t\in \mathcal{T}}\frac{\sum_{j=1}^d \{\bm 1\{ |D_j| \geq t \} - \P_{H_0}(|D_j| \geq t)\}}{\sqrt{\sum_{j=1}^d  \P_{H_0}(|D_j| \geq t)(1-\P_{H_0}(|D_j| \geq t))}}$$ for a suitably chosen index set $\mathcal{T}$ (see Lemma \ref{lm:threshold_II} for details).  }

Next, to show all tests are powerless for $\varepsilon \ll \varepsilon_2(n, d, s)$, where $\varepsilon_2(n, d, s)$ is as defined in \eqref{eq:threshold_II}, we slightly modify the sequence of priors from Theorem \ref{thm:one_sample_alphaleqhalf} (2) (as described in Section \ref{sec:alphaleqhalfoutline} above). In particular, for any given $\delta>0$, we first choose $\lceil s/2 \rceil$ elements at random from the first $\lceil d/2 \rceil$ indices and set the corresponding coordinates of $\bm p$ to $\frac{1}{d}+\varepsilon_2(n,d,s) \sqrt{C(\alpha)(1-\delta)}/s$, where $C(\alpha)$ is as defined in \eqref{eq:threshold_II}. This is subsequently balanced out by choosing $s-\lceil s/2 \rceil$ elements at random from the remaining $d-\lceil d/2 \rceil$ indices and setting the corresponding coordinates of $\bm{p}$ to $\frac{1}{d}-\varepsilon_2(n,d,s) \sqrt{C(\alpha)(1-\delta)}/s$. This induces a prior on $\mathcal{P}(U[d],s,\varepsilon)$ in the regime of Theorem Theorem \ref{thm:one_sample_alphageqhalf} (1) and allows us to implement a truncated second moment approach en route deriving the sharp matching constant in the lower bound. 

\section{Numerical Results}
\label{sec:numerical_results}

\begin{figure*}[h]
\centering
\begin{minipage}[l]{0.48\textwidth}
\centering
\vspace{-0.05in}
\includegraphics[width=3.25in]
    {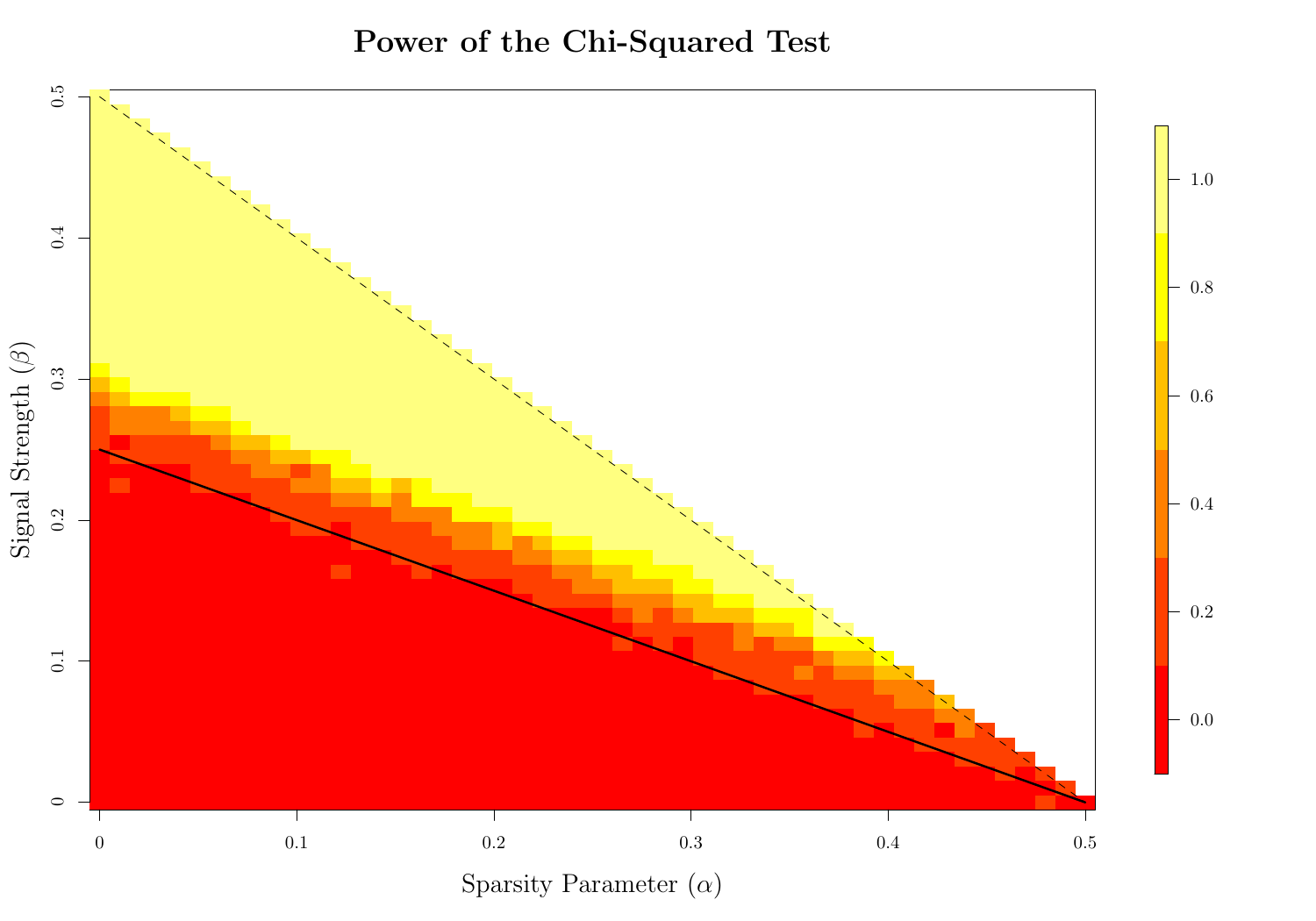}\\
\small{(a)}
\end{minipage} 
\begin{minipage}[l]{0.48\textwidth}
\centering
\vspace{-0.05in}
\includegraphics[width=3.25in]
    {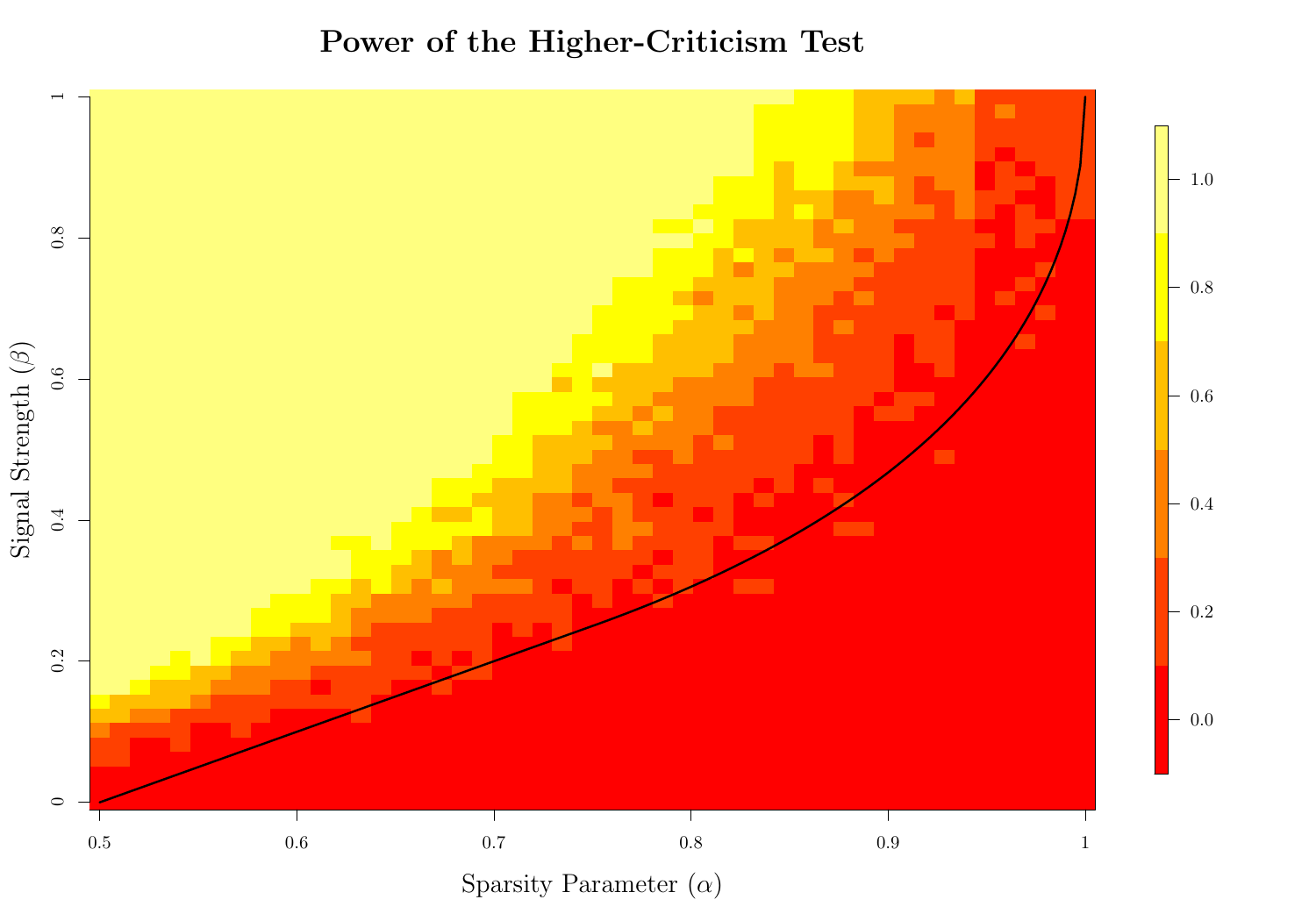}\\
\small{(b)}
\end{minipage} 
\caption{\small{Heatmaps of the empirical powers of (a) the $\chi^2$ test in the dense regime ($\alpha \leq \frac{1}{2}$), and (b) the HC test in the sparse regime ($\alpha > \frac{1}{2}$). The solid black curves denote the theoretical detection thresholds in the respective regimes.}}
\label{fig:T12}
\end{figure*}

In this section we present some numerical experiments in order to demonstrate the behavior of the various tests in finite samples. We begin with the dense regime: $0 \leq \alpha \leq \frac{1}{2}$. Here, we re-parametrize the signal strength as $\varepsilon = d^{\beta}/\sqrt n$, where $\beta \geq 0$. Then the result in Theorem \ref{thm:one_sample_alphaleqhalf} (1) shows that the $\chi^2$ test is asymptotically powerful, whenever 
$$\varepsilon = \frac{d^{\beta}}{\sqrt n} \gg \varepsilon_1(n, d, s) := \left(\frac{s}{n\sqrt{d}} \right)^{\frac{1}{2}},$$ 
which holds if and only if $\beta > \frac{1}{4} -\frac{\alpha}{2}$. On the other hand, all tests are asymptotically powerless whenever $\beta < \frac{1}{4} -\frac{\alpha}{2}$. In other words, the detection boundary of the problem in the dense regime is determined by the line: $\beta=  \frac{1}{4} -\frac{\alpha}{2}$. In order to illustrate this phenomenon numerically, we choose $d=n=10^5$ and compute, for a fixed value of $(\alpha, \beta)$, the empirical power (over 100 repetitions) of $\chi^2$ test (based on the statistic $T_n$ defined in \eqref{eq:Tn}), against the alternative distribution  $\bm p = (p_1, p_2, \ldots, p_d)$, where 
\begin{align}\label{eq:p_dense}
p_j= 
		\left\{
		\begin{array}{cl}
		\frac{1}{d} + \frac{\varepsilon}{s}   &  \text{for } 1 \leq j \leq \frac{s}{2}, \\ 
		\frac{1}{d} - \frac{\varepsilon}{s}   &  \text{for } \frac{s}{2}+1 \leq j \leq s, \\ 
		\frac{1}{d} &  \text{for } s+1 \leq j \leq d . 
		\end{array}
		\right. 
\end{align}		
The cutoff of the test is obtained by computing the 95\% empirical quantile of $T_n$ based on 1000 simulation instances of the null distribution (which is the uniform distribution on $[d]$). Figure \ref{fig:T12} (a) shows the heatmap of the empirical power as $(\alpha, \beta) $ varies over a $50 \times 50$ grid in $[\frac{1}{2}, 1] \times [0, \frac{1}{2}]$. Note that the upper half of the grid is not included in the plot because $\bm p \in \cP(U([d]), s, \varepsilon)$ if and only if $\alpha + \beta \leq \frac{1}{2}$.\footnote{Clearly, for $\bm p$ as in \eqref{eq:p_dense}, $\sum_{j=1}^d p_j=1$,  $||\bm p - U([d])||_0 =s$, and $||\bm p - U([d])||_1 \geq \varepsilon$. Hence, $\bm p \in \cP(U([d]), s, \varepsilon)$  as long as $p_j \geq 0$, for all $j \in [d]$. This happens whenever $1/d \geq \varepsilon/s$, that is, if and only if, $\alpha + \beta \leq \frac{1}{2}$, since $n=d$ in this example.}  The solid black line denotes the theoretical detection boundary, that is, the curve $\beta=  \frac{1}{4} -\frac{\alpha}{2}$. As predicted by our theoretical results, the phase transition of the empirical power of the $\chi^2$ test (between regions where it has maximal power and the region where it has no power) is clear from the simulations as well. 


Next, we consider the sparse regime: $\frac{1}{2} < \alpha < 1$. Here, we re-parametrize the signal strength as 
$$ \varepsilon  = s \left(\frac{2 \beta \log{d}}{nd} \right)^{\frac{1}{2}},$$
where $\beta \geq 0$. 
Then the result in Theorem \ref{thm:one_sample_alphageqhalf} (1) implies that detection is possible if only if $\beta > C(\alpha)$ (where $C(\alpha)$ is as defined in \eqref{eq:threshold_III}). To illustrate this phenomenon numerically, we choose $d=10^5$, $n= d^{1.4}$, set the cutoff at 90\%, and compute, for a fixed value of $(\alpha, \beta)$, the empirical power (over 100 repetitions) of the HC test 
against the alternative distribution $\bm p = (p_1, p_2, \ldots, p_d)$ as in \eqref{eq:p_dense}. 
Figure \ref{fig:T12} (b) shows the heatmap of the empirical power as $(\alpha, \beta) $ varies over a $50 \times 50$ grid in $[\frac{1}{2}, 1] \times [0, 1]$. The solid black curve denotes the theoretical detection boundary, that is, the curve $\beta=  C(\alpha)$. As before, the phase transition of the empirical power of the HC test is clear from the simulations, validating the theoretical predictions.


\section{Proofs of the Main Results}\label{sec:main_results_pf}
		
	
This section is organized as follows: In Section \ref{sec:leqhalfpf} we prove  Theorem \ref{thm:one_sample_alphaleqhalf} (1). The proof of Theorem \ref{thm:one_sample_alphageqhalf} (2) is given in Section \ref{sec:geqhalfpf}. The proof for the impossibility regimes (Theorem \ref{thm:one_sample_alphaleqhalf} (2) and \ref{thm:one_sample_alphageqhalf} (2)) is given in Section \ref{sec:lower_bound_pf}. 
	
\subsection{Proof of Theorem \ref{thm:one_sample_alphaleqhalf} (1)}
\label{sec:leqhalfpf}
	
In Section \ref{sec:leqhalf_upper_bound_pf} we prove the upper bound on the minimax risk (lower bound on the detection boundary) by analyzing performance of the $\chi^2$-type statistic \eqref{eq:Tn}. The matching lower bound on the minimax risk is established in Section \ref{sec:leqhalf_lower_bound_pf}.	
	
\subsubsection{Upper Bound for Theorem {\em \ref{thm:one_sample_alphageqhalf} (1)}: Analysis of the Chi-Squared Test}
\label{sec:leqhalf_upper_bound_pf}

Throughout we suppose $N \sim \dPois(n)$ and $Z_1, Z_2, \ldots, Z_d$ are as defined in \eqref{eq:ZN}.  Recall the definition of the $\chi^2$-type statistic $T_n$ from \eqref{eq:Tn}. 
	
	\begin{lem}\label{lm:Tn} For $T_n$ as defined in \eqref{eq:Tn}, the following hold: 
		\begin{enumerate}
			
			\item[(a)] For $\bm p = (p_1, p_2, \ldots, p_d) \in \cP([d])$, 
			\begin{align}\label{eq:ETn}
			\E_{\bm p}[T_n]=n^2 ||\bm p - U([d])||_2^2. 
			\end{align}
			This implies, for $\bm p \in \cP(U[d], s, \varepsilon)$, $\E_{\bm p}[T_n] \geq \frac{n^2 \varepsilon^2}{2s}.$
			
			\item[(b)] For $\bm p = (p_1, p_2, \ldots, p_d) \in \cP([d])$, 
				\begin{align}\label{eq:VTn}
				\Var_{\bm p}[T_n]=\sum_{j=1}^d \Big\{ \frac{2n^2}{d^2} + 2n^2 \Delta_j^2 + \frac{4n^3 \Delta_j^2}{d} - 4n^3 \Delta_j^3 \Big\}, 
				\end{align}
				where $\Delta_i=\frac{1}{d}-p_i$, for $1 \leq i \leq d$.  
		\end{enumerate} 
	\end{lem}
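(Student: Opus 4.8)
## Proof Proposal for Lemma \ref{lm:Tn}

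The plan is to work in the Poisson sampling model, where $Z_1, \ldots, Z_d$ are independent with $Z_j \sim \dPois(np_j)$, and exploit the fact that for a Poisson random variable $Z \sim \dPois(\lambda)$ the factorial moments are especially clean: $\se[Z] = \lambda$, $\se[Z(Z-1)] = \lambda^2$, $\se[Z(Z-1)(Z-2)] = \lambda^3$, and $\se[Z(Z-1)(Z-2)(Z-3)] = \lambda^4$. The key observation is that the summand $(Z_j - n/d)^2 - Z_j$ should be re-expressed in terms of these factorial moments. Writing $\lambda_j = np_j$, one has $(Z_j - n/d)^2 - Z_j = Z_j(Z_j-1) - \frac{2n}{d} Z_j + \frac{n^2}{d^2}$, so $\se_{\bm p}[(Z_j - n/d)^2 - Z_j] = \lambda_j^2 - \frac{2n}{d}\lambda_j + \frac{n^2}{d^2} = (\lambda_j - n/d)^2 = n^2(p_j - 1/d)^2$. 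Summing over $j$ gives \eqref{eq:ETn}; the consequence for $\bm p \in \cP(U[d], s, \varepsilon)$ then follows from Cauchy--Schwarz, since $\|\bm p - U([d])\|_1 \le \sqrt{s}\,\|\bm p - U([d])\|_2$ (the support of $\bm p - U([d])$ has size $s$), hence $\|\bm p - U([d])\|_2^2 \ge \varepsilon^2/s$, and I should double-check the factor of $2$ — it appears the bound as stated uses $\|\bm p - U([d])\|_2^2 \ge \varepsilon^2/(2s)$, which is the safe/clean form.

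For part (b), I would use independence to write $\var_{\bm p}[T_n] = \sum_{j=1}^d \var_{\bm p}[W_j]$ where $W_j := (Z_j - n/d)^2 - Z_j = Z_j(Z_j-1) - \frac{2n}{d}Z_j + \frac{n^2}{d^2}$. Since the additive constant does not affect the variance, it suffices to compute $\var_{\bm p}[Z_j(Z_j-1) - \frac{2n}{d}Z_j]$. This requires $\se[(Z_j(Z_j-1))^2]$, $\se[Z_j(Z_j-1) \cdot Z_j]$, and $\se[Z_j^2]$, all obtainable by expanding products into falling factorials: $(Z(Z-1))^2 = Z(Z-1)(Z-2)(Z-3) + 4Z(Z-1)(Z-2) + 2Z(Z-1)$, and $Z^2(Z-1) = Z(Z-1)(Z-2) + 2Z(Z-1)$. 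Substituting the factorial moments of $\dPois(\lambda_j)$ and collecting terms in powers of $\lambda_j = n p_j = n(1/d - \Delta_j)$, then re-expanding around $\Delta_j = 0$, should produce exactly the claimed expression $\frac{2n^2}{d^2} + 2n^2\Delta_j^2 + \frac{4n^3\Delta_j^2}{d} - 4n^3\Delta_j^3$.

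The main obstacle is purely computational bookkeeping: the variance calculation in (b) involves expanding $\lambda_j^4, \lambda_j^3, \lambda_j^2$ with $\lambda_j = n/d - n\Delta_j$ and verifying that the numerous cross-terms cancel so cleanly — in particular that all terms of order $n^4$ cancel and that the surviving terms are exactly the four listed (note the absence of an $n^3 \Delta_j/d^2$ term and of $n^4$ terms is the nontrivial content). I would organize this by first computing $\se_{\bm p}[W_j^2]$ and $(\se_{\bm p}[W_j])^2 = n^4\Delta_j^4$ separately, each as a polynomial in $\lambda_j$, then subtracting, and only at the end substituting $\lambda_j = n/d - n\Delta_j$; keeping things symbolic in $\lambda_j$ as long as possible minimizes the risk of algebra errors. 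No deep idea is needed beyond the Poisson factorial-moment identities and linearity/independence.
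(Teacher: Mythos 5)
Your approach is correct and is essentially the standard one; in fact the paper's own proof of part (b) simply cites \cite[Appendix B]{balakrishnan2019hypothesis}, so your factorial-moment computation is a more self-contained version of the same argument, and your treatment of part (a) (Poisson factorial moments plus Cauchy--Schwarz on the $s$-sparse support, with the factor $2$ being harmless slack) matches the paper exactly.

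One subtlety you should be prepared for when you carry out the bookkeeping in (b): the identity \eqref{eq:VTn} holds only for the \emph{sum} over $j$, not term by term. Writing $\mu = n/d$ and $\lambda_j = \mu - n\Delta_j$, the per-coordinate variance works out to
\begin{align*}
\Var_{\bm p}\big[(Z_j - n/d)^2 - Z_j\big] \;=\; 4\lambda_j^3 + 2\lambda_j^2 - 8\mu\lambda_j^2 + 4\mu^2\lambda_j \;=\; \frac{2n^2}{d^2} - \frac{4n^2\Delta_j}{d} + 2n^2\Delta_j^2 + \frac{4n^3\Delta_j^2}{d} - 4n^3\Delta_j^3,
\end{align*}
which contains an extra term linear in $\Delta_j$ beyond the four you expect to survive. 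This term disappears only after summing over $j$, because $\sum_{j=1}^d \Delta_j = 1 - \sum_j p_j = 0$. So your plan is sound, but the claim that ``the surviving terms are exactly the four listed'' is true of the total, not of each summand; make sure to invoke the simplex constraint at the final step rather than expecting a coordinatewise cancellation.
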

	
	\begin{proof} The result in \eqref{eq:ETn} follows easily from the fact that  $Z_1, Z_2, \ldots, Z_d$ are independent with $Z_j \sim \dPois(n p_j)$, for $j \in [d]$. Hence, for $\bm p \in \cP(U[d], s, \varepsilon)$, by the Cauchy Schwarz inequality, 
		$$\E_{\bm p}[T_n] = n^2 ||\bm p - U([d]) ||_2^2  \geq \frac{n^2}{s} ||\bm p - U([d]) ||_1^2  \geq \frac{n^2 \varepsilon^2}{2s},$$
		since only $s$ coordinates of the vector $(\Delta_1, \Delta_2, \ldots, \Delta_d)$ are non-zero. 
		
		The result in \eqref{eq:VTn} follows from \cite[Appendix B]{balakrishnan2019hypothesis}. 
	\end{proof}
	
The following lemma shows that the test which rejects for large values of $T_n$ attains the minimax detection boundary in the regime $\alpha \leq \frac{1}{2}$ and $n \gtrsim d^{\frac{1}{2}+\alpha}$. This establishes part (a) of Theorem \ref{thm:one_sample_alphaleqhalf} (1).

	\begin{lem}\label{lm:threshold_I} Suppose $\alpha \leq \frac{1}{2}$ and $n \gtrsim d^{\frac{1}{2}+\alpha}$. Then the test function 
		\begin{align}\label{eq:Tn_I}
		\phi=\bm 1\left\{ T_n \geq \frac{n^2 \varepsilon^2}{4s} \right\}, 
		\end{align}
is asymptotically powerful whenever $\varepsilon \gg \varepsilon_1(n, d, s)$, where $\varepsilon_1(n, d, s)$ is as defined in \eqref{eq:threshold_epsilon}. 
	\end{lem}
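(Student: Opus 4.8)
The plan is to bound the two error probabilities defining $\overline{\cR}_{n,d}(\phi, s, \varepsilon)$ separately, in each case applying Chebyshev's inequality to $T_n$ with the exact first and second moments furnished by Lemma~\ref{lm:Tn}. Write $\Delta_j := \tfrac1d - p_j$ and $V := \|\bm p - U([d])\|_2^2 = \sum_{j=1}^d \Delta_j^2$. The only features of the alternative I will use are the two uniform bounds $V \geq \varepsilon^2/(2s)$ (Cauchy--Schwarz, since at most $s$ of the $\Delta_j$ are nonzero, exactly as in Lemma~\ref{lm:Tn}(a)) and $\sum_{j=1}^d |\Delta_j|^3 = \|\bm p - U([d])\|_3^3 \leq \|\bm p - U([d])\|_2^3 = V^{3/2}$, both of which hold for every $\bm p \in \cP(U[d], s, \varepsilon)$; consequently all resulting estimates will be uniform over the composite alternative. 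I will also use repeatedly that $n \gtrsim d^{\frac12 + \alpha} \geq \sqrt{d}$ (since $\alpha \geq 0$), so that $\sqrt d/n \lesssim 1$, and I will work throughout in the Poissonized model, transferring the conclusion back to the multinomial model via the reduction recorded in Section~\ref{sec:outline_pf}.

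For the type I error, under $H_0$ all the $\Delta_j$ vanish, so Lemma~\ref{lm:Tn} gives $\E_{H_0}[T_n] = 0$ and $\Var_{H_0}[T_n] = 2n^2/d$. Chebyshev's inequality then yields
\[
\P_{H_0}(\phi = 1) = \P_{H_0}\!\left(T_n \geq \tfrac{n^2 \varepsilon^2}{4s}\right) \leq \frac{2n^2/d}{\left(n^2\varepsilon^2/(4s)\right)^2} = \frac{32\,s^2}{n^2 d\,\varepsilon^4} = 32\left(\frac{\varepsilon_1(n,d,s)}{\varepsilon}\right)^{4},
\]
which tends to $0$ as soon as $\varepsilon \gg \varepsilon_1(n,d,s)$; note this step alone does not even require $n \gtrsim d^{\frac12+\alpha}$.

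For the type II error, fix $\bm p \in \cP(U[d], s, \varepsilon)$. By Lemma~\ref{lm:Tn}(a), $\E_{\bm p}[T_n] = n^2 V \geq n^2\varepsilon^2/(2s)$, so the rejection threshold satisfies $\tfrac{n^2\varepsilon^2}{4s} \leq \tfrac12 \E_{\bm p}[T_n]$ and therefore $\{\phi = 0\} \subseteq \big\{|T_n - \E_{\bm p}[T_n]| > \tfrac12 \E_{\bm p}[T_n]\big\}$. Chebyshev's inequality combined with the variance formula \eqref{eq:VTn}, after bounding $\big|4n^3\sum_j\Delta_j^3\big| \leq 4n^3 \sum_j |\Delta_j|^3 \leq 4n^3 V^{3/2}$, gives
\[
\P_{\bm p}(\phi = 0) \leq \frac{4\,\Var_{\bm p}[T_n]}{\left(\E_{\bm p}[T_n]\right)^2} \leq \frac{8}{n^2 d\,V^2} + \frac{8}{n^2 V} + \frac{16}{n d\,V} + \frac{16}{n\,V^{1/2}}.
\]
It then remains to check each of the four terms is $o(1)$ uniformly in $\bm p$. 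Substituting $V \geq \varepsilon^2/(2s)$ and recalling $\varepsilon_1^2 = s/(n\sqrt d)$, the first term is $\leq 32(\varepsilon_1/\varepsilon)^4$, the second is $\leq 16(\sqrt d/n)(\varepsilon_1/\varepsilon)^2 \lesssim (\varepsilon_1/\varepsilon)^2$, the third is $\leq 32\, d^{-1/2}(\varepsilon_1/\varepsilon)^2$, and the fourth is $\leq 16\sqrt 2\,(\sqrt d/n)^{1/2}(\varepsilon_1/\varepsilon) \lesssim (\varepsilon_1/\varepsilon)$, all of which vanish when $\varepsilon \gg \varepsilon_1(n,d,s)$ and $n \gtrsim \sqrt d$. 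Taking the supremum over $\bm p \in \cP(U[d],s,\varepsilon)$ and adding the type I bound shows $\overline{\cR}_{n,d}(\phi, s, \varepsilon) \to 0$, i.e.\ $\phi$ is asymptotically powerful.

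This is a routine second-moment computation, so I do not expect a genuine obstacle; the one place requiring a little care is controlling the cubic contribution $n^3\sum_j \Delta_j^3$ from \eqref{eq:VTn} uniformly over the composite alternative. The clean device is the $\ell_3 \le \ell_2$ inequality $\sum_j |\Delta_j|^3 \le V^{3/2}$, after which the hypothesis $n \gtrsim d^{\frac12+\alpha}$ (in particular $n \gtrsim \sqrt d$) is precisely what forces $\tfrac{1}{nV^{1/2}} \to 0$; without a lower bound on $n$ this term need not vanish, which is consistent with the $\chi^2$-type test ceasing to be optimal once $n \ll d^{\frac12+\alpha}$.
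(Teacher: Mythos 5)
Your proposal is correct and follows essentially the same route as the paper: Chebyshev under $H_0$ with $\E_{H_0}[T_n]=0$, $\Var_{H_0}[T_n]=2n^2/d$, then Chebyshev under the alternative using the variance formula \eqref{eq:VTn}, with the cubic term controlled by $\sum_j|\Delta_j|^3\le V^{3/2}$ (the paper phrases this as H\"older's inequality) and the hypothesis $n\gtrsim d^{\frac12+\alpha}\ge\sqrt d$ absorbing the remaining factors. Your version is slightly more explicit in tracking each of the four variance contributions as powers of $\varepsilon_1/\varepsilon$ and $\sqrt d/n$, but there is no substantive difference.
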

	
	\begin{proof} Note that by Lemma \ref{lm:Tn}, $\E_{H_0}[T_n] = 0$ and $\Var_{H_0}[T_n]= \frac{2n^2}{d}.$ Then by the Chebyshev's inequality,  the probability of the Type I error can be bounded by 
		$$\P_{H_0}(\phi=1)=\P_{H_0}\left(T_n \geq \frac{n^2 \varepsilon^2}{4s}\right) \lesssim \frac{s^2\Var_{H_0}[T_n]}{n^4 \varepsilon^4} \lesssim \frac{s^2}{d n^2 \varepsilon^4} \rightarrow 0,$$ 
		whenever $\varepsilon \gg \varepsilon_1(n, d, s)$. 

		Next, we consider the probability of Type II error. For this, suppose $\bm p \in \cP(U([d]), s, \varepsilon)$. Then using Lemma \ref{lm:Tn} gives, 
		\begin{align} 
		\Var_{\bm p}[T_n] & = \sum_{j=1}^d \left( \frac{2 n^2}{d^2} + 2 n^2 \Delta_j^2 + \frac{4n^3 \Delta_j^2}{d} - 4 n^3 \Delta_j^3 \right) \nonumber \\ 
		& \leq  \frac{2n^2}{d}+  \sum_{j=1}^d n^2 \Delta_j^2 + \frac{4 n^3}{d}\sum_{j=1}^d  \Delta_j^2 + 4 \left(\sum_{j=1}^d  n^2 \Delta_j^2 \right)^{\frac{3}{2}} \tag*{(by H\"older's inequality)} \nonumber \\ 
		& \leq  \frac{2n^2}{d} + \E_{\bm p}[T_n] + \frac{4 n}{d} \E_{\bm p}[T_n] + 4 \left( \E_{\bm p}[T_n]  \right)^{\frac{3}{2}} \tag*{(since $\E_{\bm p}[T_n] = n^2  \sum_{j=1}^d \Delta_j^2$)} \nonumber \\ 
		&:= U_1+U_2+U_3+U_4.
	\label{eq:var_Tn}	
		\end{align} 
Note that by Lemma \ref{lm:Tn} and using $\varepsilon \gg \varepsilon_1(n, d, s)$ and $n \gtrsim d^{\frac{1}{2}+\alpha}$, 
		$$\E_{\bm p}[T_n]= n^2 \sum_{j=1}^d  \Delta_j^2 \geq \frac{n^2 \varepsilon^2}{4s} \gg \frac{n}{\sqrt d} \gtrsim d^{\alpha}  \gtrsim 1.$$
		This implies, $U_2=o(\E_{\bm p}[T_n]^2)$ and $U_4=o(\E_{\bm p}[T_n]^2)$. Next, consider the first term $U_1:=\frac{2n^2}{d}$. For this, using $\E_{\bm p}[T_n] \geq \frac{n^2 \varepsilon^2}{4s}$ gives,   
		$$\frac{U_1}{(\E_{\bm p}[T_n])^2} \lesssim \frac{s^2}{d n^2 \varepsilon^4} \rightarrow 0,$$
		whenever $\varepsilon \gg \varepsilon_1(n, d, s)$. Similarly, it follows that $U_3=o(\E_{\bm p}[T_n]^2)$, whenever $\varepsilon \gg \varepsilon_1(n, d, s)$.  Therefore, by \eqref{eq:var_Tn}, $\frac{\Var_{\bm p}[T_n] }{(\E_{\bm p}[T_n])^2} \rightarrow 0$, whenever $\bm p \in \cP(U([d]), s, \varepsilon)$ and $\varepsilon \gg \varepsilon_1(n, d, s)$. Therefore, the probability of Type II error of the test $\phi$ is 
		\begin{align*} 
		\P_{\bm p}(\phi=0)=\P_{\bm p}\left(T_n \leq \frac{n^2 \varepsilon^2}{4s}\right) & \leq \P_{\bm p}\left(T_n -\E_{\bm p}[T_n] \leq - \tfrac{1}{2} \E_{\bm p}[T_n] \right) \lesssim \frac{\Var_{\bm p}[T_n] }{(\E_{\bm p}[T_n])^2} \rightarrow 0, 
		\end{align*} 
		whenever $\varepsilon \gg \varepsilon_1(n, d, s)$. 
	\end{proof}
	
	\begin{remark} Note that if $n \gg d^{\frac{1}{2}+\alpha}$, then 
		$$\varepsilon_1(n, d, s) = \left(\frac{s}{n\sqrt{d}}\right)^{\frac{1}{2}}=\left(\frac{d^{\frac{1}{2}-\alpha}}{n}\right)^{\frac{1}{2}} \ll  \frac{s}{d} \lesssim \varepsilon_{\max} ,$$
where $\varepsilon_{\max}$ is as defined in Proposition \ref{ppn:L01}. This means in the regime $n\gg d^{\frac{1}{2}+\alpha}$, there is always a non-trivial range of detectable signal strength $\varepsilon_1(n, d, s) \ll \varepsilon \leq \varepsilon_{max}$. 
	\end{remark}

		\subsubsection{Lower Bound for Theorem {\em \ref{thm:one_sample_alphaleqhalf} (1)}} 
\label{sec:leqhalf_lower_bound_pf}

	In this section we prove part (b) of Theorem \ref{thm:one_sample_alphaleqhalf} (1). In other words, we show that the detection threshold of the $\chi^2$ test derived in Lemma \ref{lm:threshold_I} above is tight in the regime $\alpha \leq \frac{1}{2}$ and $n \gtrsim d^{\frac{1}{2}+\alpha}$. This is summarized in the following lemma:  
	
	\begin{lem}\label{lm:threshold_lb_I} Suppose $\alpha \leq \frac{1}{2}$ and $n \gtrsim d^{\frac{1}{2}+\alpha}$. Then all tests are asymptotically powerless whenever $\varepsilon \ll \varepsilon_1(n, d, s)$.
	\end{lem}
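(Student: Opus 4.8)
The plan is to invoke the second-moment lower bound \eqref{eq:R_lb} in the Poissonized model, so that it suffices to exhibit a sequence of priors $\pi_n$ on $\cP(U[d], s, \varepsilon)$ with $\E_{H_0}[L_{\pi_n}^2] = 1 + o(1)$ whenever $\varepsilon \ll \varepsilon_1(n, d, s)$. I will use the ``paired'' prior from Section \ref{sec:alphaleqhalfoutline}: partition $[d]$ into the $d/2$ consecutive pairs $\{2k-1, 2k\}$, draw a uniformly random collection $A$ of $s/2$ of these pairs, and within each selected pair independently pick one of its two coordinates uniformly at random, raising it to $\frac1d + \frac{\varepsilon}{s}$ and lowering the other to $\frac1d - \frac{\varepsilon}{s}$. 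The resulting vector lies in $\cP(U[d], s, \varepsilon)$: when $n \gtrsim d^{\frac12 + \alpha}$ one has $\varepsilon \ll \varepsilon_1(n,d,s) \ll s/d$ (see the Remark after Lemma \ref{lm:threshold_I}), so all coordinates remain nonnegative.

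The heart of the argument is an exact reduction of $\E_{H_0}[L_{\pi_n}^2]$. Writing $p_j = \frac1d + \Delta_j$, letting $\bm p'$ with coordinates $p_j' = \frac1d + \Delta_j'$ be an independent copy drawn from $\pi_n$, and using that under $H_0$ the $Z_j$ are independent $\dPois(n/d)$ with $\E_{H_0}[t^{Z_j}] = e^{(n/d)(t-1)}$, the product in \eqref{eq:L_pi} telescopes coordinatewise to
$$\E_{H_0}[L_{\pi_n}^2] = \E\Big[\exp\Big(nd\sum_{j=1}^d \Delta_j \Delta_j'\Big)\Big],$$
where the outer expectation is over the two independent copies $\bm p, \bm p' \sim \pi_n$. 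Now $\Delta_j \Delta_j'$ is nonzero only for coordinates $j$ lying in a pair selected by both priors, and summing over the two coordinates of such a common pair $k \in A \cap A'$ gives $2(\varepsilon/s)^2\xi_k$, where, conditionally on $(A, A')$, the $\xi_k$ are i.i.d.\ Rademacher (the product of the two independent orientation signs). Hence, setting $\theta := 2nd\varepsilon^2/s^2$ and taking the conditional expectation over the signs,
$$\E_{H_0}[L_{\pi_n}^2] = \E\big[(\cosh\theta)^{|A\cap A'|}\big].$$

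To conclude I will bound this last expectation. Under $\varepsilon \ll \varepsilon_1(n,d,s)$ and $\alpha \le \frac12$ one checks $\theta \ll d^{\alpha - \frac12}$, which is bounded since $\alpha \le \frac12$, so $\cosh\theta - 1 \lesssim \theta^2$; moreover $|A \cap A'|$ is hypergeometric and hence dominated in the convex order by a $\mathrm{Binomial}(s/2, s/d)$ random variable (Hoeffding), so that
$$\E\big[(\cosh\theta)^{|A\cap A'|}\big] \le \Big(1 + \tfrac{s}{d}(\cosh\theta - 1)\Big)^{s/2} \le \exp\Big(\tfrac{s^2}{2d}(\cosh\theta - 1)\Big) \le \exp\big(C\, n^2 d\, \varepsilon^4 / s^2\big)$$
for an absolute constant $C$. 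Since $\varepsilon^4 \ll \varepsilon_1(n, d, s)^4 = s^2/(n^2 d)$, the exponent tends to $0$, so $\E_{H_0}[L_{\pi_n}^2] = 1 + o(1)$; plugging into \eqref{eq:R_lb} gives $\overline{\cR}_{n,d}(s, \varepsilon) \to 1$, and transferring from the Poisson model to the multinomial model via $\cR_{n,d}(s,\varepsilon) = (1+o(1))\overline{\cR}_{n,d}(s,\varepsilon)$ finishes the proof. The step I expect to need the most care is keeping $\theta$ under control — this is exactly where the hypotheses $\alpha \le \frac12$ (i.e.\ $s \gtrsim \sqrt d$) and $n \gtrsim d^{\frac12 + \alpha}$ enter — together with marshalling the convex-order domination so that the unwieldy hypergeometric moment generating function is replaced by the clean exponential bound above.
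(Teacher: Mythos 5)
Your proposal is correct and follows essentially the same route as the paper: the same paired prior with a random $s/2$-subset of pairs and Rademacher orientations, the same coordinatewise reduction of $\E_{H_0}[L_{\pi_n}^2]$ to $\E[(\cosh\theta)^{|A\cap A'|}]$ with $\theta=2nd\varepsilon^2/s^2$, and the same hypergeometric-to-binomial convex-order bound leading to $\exp\{O(n^2d\varepsilon^4/s^2)\}=1+o(1)$. The only (harmless) difference is that you explicitly verify nonnegativity of the prior's coordinates and use $\cosh\theta-1\lesssim\theta^2$ for bounded $\theta$ rather than the paper's asymptotic equality as $\theta\to0$.
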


	\begin{proof} 
		Throughout we assume that $d$ and $s=2K$ are even. Decompose the domain $[d]$ into $\frac{d}{2}$ consecutive pairs $$\{\{1, 2\}, \{3, 4\}, \ldots, \{d-1, d\}\}.$$ 
Next, suppose  $\bm \eta=(\eta_1, \eta_2, \ldots, \eta_{d/2})$ are i.i.d. random variables taking values $\{\pm 1\}$ with probability $\frac{1}{2}$. Now, choose a subset $S$ of $[\frac{d}{2}]$ of size $|S|=K$, uniformly at random. Denote the elements of $S$ by $\{i_1, i_2, \ldots, i_K \}$.  Then define, 
		$$p_{2i_r-1}= \frac{1}{d} +  \eta_r \frac{\varepsilon}{s}   \quad \text{ and }  \quad p_{2i_r}= \frac{1}{d}  - \eta_r \frac{\varepsilon}{s},
		$$ 
		for $r \in [K]$ and $p_j=\frac{1}{d}$, otherwise. (In other words, for each tuple in $S$ we increase the value in one of the two coordinates by $\varepsilon/s$ and decrease the value in the other coordinate by $\varepsilon/s$, uniformly at random.) Clearly, this ensures $\bm p \in \mathcal P(U[d], s, \varepsilon)$. 
		
		Denoting the prior above on $\cP(U([d]), s, \varepsilon)$ by $\pi_n$, the $\pi_n$-integrated likelihood ratio (recall \eqref{eq:L_pi}) becomes, 
		\begin{align}
		L_{\pi_n} 
		& =  \E_{S, \bm \eta } \left[ \prod_{r \in S} e^{-\frac{n \eta_r \varepsilon}{s}} \left( 1+ \frac{\eta_r d \varepsilon}{s} \right)^{Z_{2i_r-1}}  e^{\frac{n \eta_r  \varepsilon}{s}} \left( 1- \frac{\eta_r d \varepsilon}{s} \right)^{Z_{2i_r}}  \right] 
		\nonumber \\
		& =  \E_{S, \bm \eta } \left[ \prod_{r \in S} e^{-\frac{n \eta_r \varepsilon}{s}} \left( 1+  \eta_r \theta \right)^{Z_{2i_r-1}}  e^{\frac{n \eta_r \varepsilon}{s}} \left( 1- \eta_r \theta \right)^{Z_{2 i_r}}   \right], \nonumber  
		\end{align}
		where $\E_{S, \bm \eta }$ denotes the expectation with respect to the randomness of the set $S$ and $\bm \eta= (\eta_1, \eta_2, \ldots, \eta_K)$, and $\theta=\frac{d \varepsilon}{s}$. This implies, 
		\begin{align}
		& \E_{H_0} [L_{\pi_n}^2] \nonumber \\ 
		&= \E_{\substack{{S_1, \bm \eta}\\ {S_2, \bm \eta'}}}\E_{H_0}\left[  \prod_{r \in  S_1 \cap S_2} e^{-\frac{n (\eta_r +\eta_r') \varepsilon}{s}} \left( (1+  \eta_r \theta) (1+ \eta_r' \theta) \right)^{Z_{2i_r-1}}  e^{\frac{n (\eta_r +\eta_r') \varepsilon}{s}}  \left( (1-  \eta_r \theta) (1- \eta_r' \theta) \right)^{Z_{2i_r}}  \right], \nonumber 
		\end{align}
		since 
		$$\E_{H_0}\left[  \prod_{r \in  S_1 \triangle S_2} e^{-\frac{n (\eta_r +\eta_r') \varepsilon}{s}} \left( (1+  \eta_r \theta) (1+ \eta_r' \theta) \right)^{Z_{2i_r-1}}  e^{\frac{n (\eta_r +\eta_r') \varepsilon}{s}}  \left( (1-  \eta_r \theta) (1- \eta_r' \theta) \right)^{Z_{2i_r}}  \right] =1,$$
		where $S_1 \triangle S_2$ denotes the symmetric difference of the sets $S_1$ and $S_2$. Therefore, 
		\begin{align}\label{eq:L2_I}
		\E_{H_0} [L_{\pi_n}^2] & = \E_{\substack{{S_1, \bm \eta}\\ {S_2, \bm \eta'}}}\E_{H_0}\left[  \prod_{r \in  S_1 \cap S_2} e^{ \frac{n}{d}( (\eta_r +\eta_r')\theta+ \eta_r \eta_r' \theta^2)}  e^{ - \frac{n}{d}( (\eta_r +\eta_r')\theta - \eta_r \eta_r' \theta^2)}  \right] \nonumber \\ 
		&= \E_{\substack{{S_1, \bm \eta}\\ {S_2, \bm \eta'}}}\E_{H_0}\left[  \prod_{r \in  S_1 \cap S_2} e^{2 \eta_r \eta_r' \frac{n \theta^2}{d}}  \right],
		\end{align}
		where the second last step uses, 
		\begin{align}\label{eq:exptheta}
		\E_{H_0}  \left( (1+  \eta_r \theta) (1+ \eta_r' \theta) \right)^{Z} = e^{\frac{n}{d}( (\eta_r +\eta_r')\theta+ \eta_r \eta_r' \theta^2)}, 
		\end{align} 
		for $Z \sim \dPois(n/d)$. Now, let $(S_1\cap S_2)^+=\{r \in S_1 \cap S_2: \eta_r=\eta_r'\}$ and $(S_1\cap S_2)^-=\{r \in S_1 \cap S_2: \eta_r \ne \eta_r'\}$. Then \eqref{eq:L2_I} gives, 
		\begin{align}\label{eq:L2_II}
		\E_{H_0} & [L_{\pi_n}^2] = \E_{\substack{{S_1, \bm \eta}\\ {S_2, \bm \eta'}}}\E_{H_0}\left[ \exp\left\{\frac{2n  \varepsilon^2 d}{s^2} |(S_1\cap S_2)^+|  -\frac{2n  \varepsilon^2 d}{s^2} |(S_1\cap S_2)^-| \right\}   \right] \nonumber \\ 
		&=\E_{\substack{{S_1, \bm \eta}\\ {S_2, \bm \eta'}}} \left[ \exp\left\{\frac{4 n d \varepsilon^2}{s^2 } \left(|(S_1 \cap  S_2)^+| \right) -\frac{2n d \varepsilon^2}{s^2 } \left(|S_1  \cap  S_2)|  \right)\right\} \right]. 
		\end{align}
Note that $|(S_1 \cap  S_2)^+| \Big| S_1  \cap  S_2$ is distributed as $\dBin(|S_1  \cap  S_2|, \frac{1}{2})$. Then, \eqref{eq:L2_II} gives, 
		\begin{align*}
		\E_{H_0}[L_{\pi_n}^2] = \E_{S_1 \cap S_2} \left[e^{ -\frac{2 n d \varepsilon^2}{s^2 } |S_1  \cap  S_2| }  \left[ \frac{1}{2}+ \frac{1}{2} e^{\frac{4 n d \varepsilon^2}{s^2 }} \right]^{|S_1  \cap  S_2| } \right]  =  \left[  \cosh\left(\frac{2 n d \varepsilon^2}{s^2 }\right) \right]^{|S_1  \cap  S_2| }. 
		\end{align*}
		Now, using the fact that $|S_1 \cap S_2| \sim \mathrm{Hypergeometric}(d/2, s/2, s/2)$, which is dominated by the $\dBin(s/2, s/d)$ distribution in convex ordering \cite[Proposition 20.6]{aldous1985exchangeability} it follows that,  
		\begin{align}\label{eq:L_H0}
		\E_{H_0} [L_{\pi_n}^2]  \leq  \left[ 1-\frac{s}{d}  +\frac{s}{d} \cosh\left(\frac{2n d \varepsilon^2}{s^2 }\right) \right]^{\frac{s}{2}} & = \exp\left\{\frac{s}{2} \log\left( 1-\frac{s}{d}  +\frac{s}{d} \cosh\left(\frac{2n d \varepsilon^2}{s^2 }\right) \right)\right\}\nonumber \\  
		& \leq \exp\left\{\frac{s^2}{2 d} \left( \cosh\left(\frac{2n d \varepsilon^2}{s^2 }\right) -1 \right)\right\}. 
		\end{align} 
		Note that the assumption $\varepsilon \ll \varepsilon_1(n, d, s)$ implies that $\frac{n d \varepsilon^2}{s^2 } \ll 1$, since $\frac{n d \varepsilon^2}{s^2 } \ll \frac{\sqrt d}{s}\ll 1$, since $\alpha \leq \frac{1}{2}$. Then using $\cosh x -1 = (1+o(1)) x^2$, as $x \rightarrow 0$, from \eqref{eq:L_H0} we get, 
		$$\E_{H_0} [L_{\pi_n}^2] \leq \exp\left\{\frac{s^2}{2d} \left( \cosh\left(\frac{2 n d \varepsilon^2}{s^2 }\right) -1 \right)\right\} = \exp\left\{ (1+o(1)) \frac{2 n^2 d \varepsilon^4}{s^2} \right\} =1+o(1),$$
		because $\frac{n^2 d \varepsilon^4}{s^2} \ll 1$, when $\varepsilon \ll \varepsilon_1(n, d, s)$. 
		Therefore, by \eqref{eq:R_lb}, for any test function $T$, $\overline{\cR}_{n, d}(T, s, \varepsilon)  \rightarrow 1$, for $\varepsilon \ll \varepsilon_1(n, d, s)$. This completes the proof of Proposition \ref{ppn:threshold_lb_II}. 
	\end{proof}

	\subsection{Proof of Theorem \ref{thm:one_sample_alphageqhalf} (1)}
\label{sec:geqhalfpf}
	
	In Section \ref{sec:geqhalf_upper_bound_pf} we prove the upper bound on the minimax risk by analyzing performances of maximum-type tests and the HC test. The matching lower bound on the minimax risk is established in Section \ref{sec:geqhalf_lower_bound_pf}.

	\subsubsection{Upper Bound for Theorem {\em \ref{thm:one_sample_alphageqhalf} (1)}}
	\label{sec:geqhalf_upper_bound_pf}
	
As before, suppose $N \sim \dPois(n)$ and $Z_1, Z_2, \ldots, Z_d$ are as defined in \eqref{eq:ZN}. Define,  
\begin{align}\label{eq:Mnd}
M_{n,d}=\max_{j \in [d]} \left\vert\frac{Z_j - n/d}{\sqrt{n/d}}\right\vert. 
\end{align}
We begin by showing that the test which rejects for large values of $M_{n,d}$ or large values $\max_{j \in [d]} Z_j $ attains optimal rate of detection in the regime $\alpha > \frac{1}{2}$ and $n \gg d \log d$.  
	
	\begin{lem}\label{lm:threshold_I} Suppose $\alpha >  \frac{1}{2}$ and $n \gg d \log d$. Then for any $C_0>1$ there exists $C > 1$ such that the test function 
		\begin{align}\label{eq:phi_12}
		\phi= \bm 1 \left\{ \{ M_{n,d}> \sqrt{2 C \log d} \} \bigcup \left\{\max_{j \in [d]} Z_j >\frac{10 n}{d}  \right\} \right\} , 
		\end{align} 
		is (uniformly) asymptotically powerful against any $\bm p\in \mathcal{P}(U([d]),s,\varepsilon)$ whenever there exists a $j\in [d]$ such that $|\Delta_j|=|\frac{1}{d}-p_j|\geq \sqrt{2C_0\log{d}/(nd)}$. 
	\end{lem}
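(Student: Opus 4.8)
The plan is to control the two types of error for the test $\phi$ defined in \eqref{eq:phi_12}.

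\textbf{Type I error.} Under $H_0$, each $Z_j \sim \dPois(n/d)$ independently, with $n/d \to \infty$ since $n \gg d\log d$. First I would bound $\P_{H_0}(M_{n,d} > \sqrt{2C\log d})$ by a union bound over the $d$ coordinates combined with a Gaussian-type tail estimate for the standardized Poisson variable $(Z_j - n/d)/\sqrt{n/d}$; here the condition $n \gg d\log d$ is exactly what guarantees the Poisson is close enough to Gaussian (e.g.\ via a Bennett/Bernstein bound, or a Cram\'er-type moderate deviation estimate for Poisson) so that $\P_{H_0}(|Z_j - n/d| > \sqrt{2C(n/d)\log d}) \le d^{-C(1+o(1))}$, and choosing $C > 1$ makes the union bound $d \cdot d^{-C(1+o(1))} \to 0$. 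For the second piece, $\P_{H_0}(\max_j Z_j > 10n/d)$ is bounded by $d$ times a Chernoff bound for $\dPois(n/d)$ exceeding $10$ times its mean, which decays like $d \cdot e^{-c (n/d)} \to 0$ since $n/d \gg \log d$. So $\P_{H_0}(\phi = 1) \to 0$.

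\textbf{Type II error.} Fix $\bm p \in \cP(U([d]),s,\varepsilon)$ with a coordinate $j^\star$ satisfying $|\Delta_{j^\star}| = |\tfrac1d - p_{j^\star}| \ge \sqrt{2C_0\log d/(nd)}$, i.e.\ $|np_{j^\star} - n/d| \ge \sqrt{2C_0(n/d)\log d}$. I would split into two cases according to the sign and size of the perturbation at $j^\star$. If $p_{j^\star} < 1/d$ (a downward spike) or $p_{j^\star}$ is only moderately above $1/d$, then under $\bm p$ the variable $(Z_{j^\star} - n/d)/\sqrt{n/d}$ has mean $\sqrt{n/d}(dp_{j^\star}-1)$ of absolute value at least $\sqrt{2C_0\log d}$ and variance $dp_{j^\star} = O(1)$ (this is where the bound $10n/d$ enters to cap the variance in the rejection region); since $C_0 > 1 > $ the relevant constant, a concentration inequality for $Z_{j^\star}$ shows $|Z_{j^\star} - n/d|/\sqrt{n/d} > \sqrt{2C\log d}$ with probability $\to 1$ once we pick $C$ with $1 < C < C_0$, so $M_{n,d}$ exceeds the threshold. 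If instead $p_{j^\star}$ is large — specifically $p_{j^\star} \ge 11/d$ say — then the standardization by $\sqrt{n/d}$ may not suffice, but now $Z_{j^\star}$ concentrates around $np_{j^\star} \ge 11 n/d$, so $\max_j Z_j > 10n/d$ with probability $\to 1$, and the second component of $\phi$ fires. Thus in every case $\P_{\bm p}(\phi = 0) \to 0$, uniformly over the stated class.

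\textbf{Main obstacle.} The delicate point is the Type I bound on $M_{n,d}$: one needs the standardized Poisson tail to match the Gaussian tail $d^{-C}$ uniformly, and the naive sub-Gaussian bound for $\dPois(n/d)$ is too lossy near the threshold $\sqrt{2C(n/d)\log d}$ because the Poisson is only sub-exponential in its upper tail. The fix is precisely the assumption $n \gg d\log d$ (equivalently $n/d \gg \log d$), which places the deviation $\sqrt{2C(n/d)\log d}$ well inside the moderate-deviation regime $o((n/d)^{2/3})$ where Poisson tails are genuinely Gaussian; invoking a Cram\'er-type moderate deviation estimate (as is also used later in the paper for the HC analysis) then closes the argument. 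The corresponding Type II direction is easier because we only need one coordinate to be detected and we have a favorable constant gap $C_0 > C > 1$, plus the safety valve $\max_j Z_j > 10n/d$ to handle large upward perturbations without worrying about the variance.
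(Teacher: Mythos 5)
Your proposal is correct and follows essentially the same route as the paper: a union bound plus a Poisson moderate-deviation/Chernoff estimate for the two Type I error terms, and for Type II a case split on the size of $p_{j^\star}$, with a Chebyshev argument (exploiting the constant gap $1<C<C_0$) when $dp_{j^\star}$ is bounded and the raw max test $\max_j Z_j>10n/d$ firing when $p_{j^\star}$ is large. The only cosmetic difference is where you place the case boundary ($p_{j^\star}\ge 11/d$ versus the paper's $d|\Delta_{j^\star}|\gtrsim\log d$); both splits are exhaustive and both arguments close.
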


	\begin{proof} For a cutoff $t_{n,d}$ denote by $T_1(t_{n,d})=\mathbf{1}\left\{M_{n,d}>t_{n,d}\right\}$. Then, under $H_0$, by a union bound followed and by a moderate deviation bound for the Poisson distribution we have the following: 
		\begin{align*}
		\mathbb{P}_{H_0}\left(T_1(t_{n,d})=1\right)&\leq d   \mathbb{P}_0\left(\left\vert\frac{Z_1-n/d}{\sqrt{n/d}}\right\vert>t_{n,d}\right)\lesssim d\exp\left\{-\frac{t_{n,d}^2}{2}(1+o(1))\right\},
		\end{align*} 
		provided $t_{n,d}\rightarrow \infty$ is such that $t_{n,d}^2/(n/d)\rightarrow 0$ (see \cite[Lemma 2]{arias2015sparse}). Note that $t_{n,d}  : = \sqrt{2C\log{d}}$ satisfies this condition since $n\gg d\log{d}$, and thereby for any $C>1$, 
		\begin{align}\label{eq:T1_uniform}
		\mathbb{P}_{H_0}\left(T(\sqrt{2C \log d})=1\right) \rightarrow 0.
		\end{align}  
		Now, define $M_{n,d}'=\max_{j \in [d]} Z_j$ and consider test function
		$$T_2(t_{n,d}')=\mathbf{1}\left\{M_{n,d}'>\frac{n}{d}(1+t_{n,d}')\right\},$$
		for a cutoff $t_{n, d}'$. Then by Chernoff bound for $t_{n,d}':=9 \geq e^2$ the following holds (with $h(x)=x\log{x}-x+1\geq \frac{1}{2} x\log{x}$ for $x\geq e^2$):
		\begin{align}\label{eq:T2_uniform}
		\P_{H_0}(T_2(t_{n,d}')=1)\leq d \exp\left\{-\frac{n}{d}h(1+t_{n,d}')\right\}\leq \exp\left\{-\frac{n}{2d}t_{n,d}'\log{t_{n,d}'}-\log{d}\right\}\rightarrow 0,
		\end{align}
		since $n\gg d\log{d}$. Therefore, recalling the definition of $\phi$ from \eqref{eq:phi_12} and combining \eqref{eq:T1_uniform} and \eqref{eq:T2_uniform}, gives 
		$$\P_{H_0}(\phi =1) \leq \mathbb{P}_{H_0}\left(T_1(t_{n,d})=1\right) + \mathbb{P}_{H_0}\left(T_2(t_{n,d}')=1\right) \rightarrow 1,$$
		which shows that the probability of the Type I error goes to zero. 
		
		For the analysis of the Type II consider $\bm p \in \cP(U([d]), s, \varepsilon)$ 
		and let ${j}\in [d]$ be such that $|\Delta_{{j}}|\geq \sqrt{2C_0 \log{d}/(nd) }$ for $C_0>1$ a fixed constant. Now, we consider the following two cases (recall $\Delta_{{j}}= \frac{1}{d} - p_{{j}}$): 
		
		\begin{itemize} 
			
			\item Suppose $d|\Delta_{{j}}|\ll \log{d}$. Then \begin{align*}
			\frac{\pm t_{n,d}\sqrt{n/d}-n\Delta_{{j}}}{\sqrt{np_{j}}}\leq \frac{\pm \sqrt{\frac{2Cn\log{d}}{d}}-\sqrt{\frac{C_0n\log{d}}{d}}}{\sqrt{np_{{j'}}}}
			\end{align*}
			Consequently, any fixed $C>0$ satisfying $2C_0>2C$, 
			\begin{align}\label{eq:T1_p}
			\P_{\bm p}\left(T(t_{n,d}')=0\right)&\leq \P_{\bm p}\left(\frac{ -t_{n, d}\sqrt{n/d}-n\Delta_{{j}}}{\sqrt{np_{j}}}\leq \frac{Z_{{j}}-np_{{j}}}{\sqrt{np_{{j}}}}\leq \frac{ t_{n, d}\sqrt{n/d}-n\Delta_{{j}}}{\sqrt{np_{j}}}\right) \nonumber \\
			&\leq \P_{\bm p}\left(\left\vert\frac{Z_{{j}}-np_{{j}}}{\sqrt{np_{{j}}}}\right\vert>\left(\sqrt{2C_0}-\sqrt{2C}\right)\sqrt{\frac{\log{d}}{1- d\Delta_{{j'}}}}\right) \nonumber \\ 
			& \lesssim \frac{1 - d |\Delta_j|}{\log d} \rightarrow 0, 
			\end{align}
			by Chebyshev's inequality, because $d|\Delta_{{j}}|\ll \log{d}$. 
			
			\item Next, suppose $d|\Delta_{j}|\gtrsim \log{d}$.  This implies that for sufficiently large $d$, $p_{{j}}\geq \frac{ C' \log{d}}{d}$ for some constant $C'>0$. Therefore, 
			\begin{align}\label{eq:T2_p}
			\P_{\bm p}\left(T_2(t_{n,d}')=0\right)\leq \P_{\bm p}\left(Z_{j}\leq \frac{n}{d}(1+t_{n,d}')\right)  &=\P\left(\dPois(np_{{j}})\leq  \frac{10 n}{d}\right) \nonumber \\
			&\leq \P\left(\dPois\left(\frac{C' n \log{d}}{d}\right)\leq  \frac{10 n}{d}\right) \nonumber \\
			& \rightarrow 0,
			\end{align}
			by Markov's inequality. 
		\end{itemize} 
		Therefore, for $\bm p \in \cP(U([d]), s, \varepsilon)$ satisfying the stipulation of the lemma and $\phi$ as in \eqref{eq:phi_12}, combining \eqref{eq:T1_p} and \eqref{eq:T2_p}, gives 
		$$\P_{\bm p}(\phi =0) \leq \mathbb{P}_{\bm p}\left(T_1(t_{n,d})=0\right) + \mathbb{P}_{\bm p}\left(T_2(t_{n,d}')=0\right) \rightarrow 0,$$
		which shows that the probability of Type II error of the test function $\phi$ goes to zero. 
	\end{proof} 
	
Note that the test in Lemma \ref{lm:threshold_I} suffices for consistent detection if for some $\delta>0$ at least one of the coordinates of the alternative $\bm p$ has a deviation larger than $\sqrt{2(1+\delta)\log{d}/(nd)}$ from the null $U([d])$ and $n\gg d\log{d}$. To match the leading constant as in \eqref{eq:threshold_II}, we need to combine the test in \eqref{eq:phi_12} with the HC test. However, for the analysis of the HC test to be introduced next we will need to assume $$\max_{j\in [d]} |\Delta|_j \leq \sqrt{\frac{2 \log{d}}{nd}}.$$ This is enough for sufficiently large $n,d$ since otherwise the test in \eqref{eq:phi_12} detects the corresponding alternative. To this end, let 
\begin{align}\label{eq:D}
D_j:= \frac{Z_j -n/d}{\sqrt{n/d}},
\end{align} 
for $j \in [d]$. Then for $t \in \R$, define the HC statistic as: 
\begin{align}\label{eq:ghct}
GHC(t) := \frac{HC(t)}{\sqrt{\Var(HC(t))}} = \frac{\sum_{j=1}^d \{\bm 1\{ |D_j| \geq t \} - \P_{H_0}(|D_j| \geq t)\}}{\sqrt{\sum_{j=1}^d  \P_{H_0}(|D_j| \geq t)(1-\P_{H_0}(|D_j| \geq t))}},
\end{align}
where 
$$HC(t) := \sum_{j=1}^d \{\bm 1\{ |D_j| \geq t \} - \P_{H_0}(|D_j| \geq t)\}.$$  
Hereafter, given a signal strength $\varepsilon > 0$ we will chose the threshold $t$ in \eqref{eq:ghct} as 
	$$t_r: = \sqrt{2 r \log d},$$
where $r=\min\{1, 4 C^*\}$, with $C^*:=\varepsilon/\varepsilon_2(n, d, s)$ (recall the definition of $\varepsilon_2(n, d, s)$ from \eqref{eq:threshold_II}).  The following result shows that the HC test attains the minimax detection threshold when $\alpha > \frac{1}{2}$ and $n \gg d \log d$, whenever  $d \max_{1 \leq j \leq d}|\Delta_j| \leq \sqrt{2 d\log d/n}$ (recall $\Delta_j:=\frac{1}{d}-p_j$, for $j \in [d]$). (As discussed above, note that if $d \max_{1 \leq j \leq d}|\Delta_j| \gtrsim \sqrt{d\log d/n}$, then the proof of Lemma \ref{lm:threshold_I} shows that the max test can be used.) 
	
	\begin{lem}\label{lm:threshold_II} Suppose $\alpha > \frac{1}{2}$ and $n \gg d \log^3 d$. Then the test that rejects for $$\max\limits_{t\in \left \{\sqrt{2 L \log{d}}  : ~ L \in (0,5) \right \} \bigcap \mathbb{N}} |GHC(t)| > \log d$$ is  asymptotically powerful 
	whenever $\varepsilon \gg \varepsilon_2(n, d, s)$, where  $\varepsilon_2(n, d, s)$ is as defined in \eqref{eq:threshold_II} and $\bm p \in \cP(U[d], s, \varepsilon)$, is such that $d \max_{1 \leq j \leq d}|\Delta_j| \leq \sqrt{2 d\log d/n}$. 
	\end{lem}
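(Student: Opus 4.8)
The plan is to analyze the behavior of the HC statistic $GHC(t_r)$ under both the null and the alternative, show it concentrates at a small value under $H_0$ and blows up under a suitable alternative, and then patch together the regimes in $\alpha$ using the definition of $C(\alpha)$. First I would bound the Type I error. Under $H_0$, $D_j = (Z_j - n/d)/\sqrt{n/d}$ with $Z_j \sim \dPois(n/d)$, and since $n \gg d\log^3 d$ we have $n/d \gg \log^3 d$, so by a Cramér-type moderate deviation estimate for Poisson random variables (this is exactly where the $n\gg d\log^3 d$ hypothesis enters, as flagged in Remark~\ref{remark:log_gap}), $\P_{H_0}(|D_j|\ge t_r) = d^{-r(1+o(1))}$ for $t_r=\sqrt{2r\log d}$. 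Hence the normalizing denominator $\sqrt{\sum_j \P_{H_0}(|D_j|\ge t)(1-\P_{H_0}(|D_j|\ge t))} \asymp \sqrt{d^{1-r}}$ tends to infinity (since $r<1$ in the relevant range), so $GHC(t)$ is a normalized sum of independent centered indicators; a Bernstein/Chebyshev bound together with a union bound over the finitely many (at most $5$) values of $t$ in the index set gives $\P_{H_0}(\max_t |GHC(t)| > \log d) \to 0$.

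Next I would handle the Type II error against $\bm p \in \cP(U[d],s,\varepsilon)$ with $d\max_j|\Delta_j| \le \sqrt{2d\log d/n}$. The idea is that on the $s$ signal coordinates, $p_j = 1/d - \Delta_j$ with $|\Delta_j|$ of order at most $\sqrt{2\log d/(nd)}$, so writing $\mu_j := n\Delta_j/\sqrt{n/d} = \sqrt{nd}\,\Delta_j$, each $D_j$ on a signal coordinate is approximately $\dPois(np_j)$ recentered, with effective shift $|\mu_j| \lesssim \sqrt{2\log d}$; by Poisson moderate deviations again, $\P_{\bm p}(|D_j|\ge t_r) \gg d^{-r(1+o(1))}$ when the signal is large enough. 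Summing $\|\bm p - U\|_1 \ge \varepsilon$ and using $\|\cdot\|_1 \le \sqrt{s}\|\cdot\|_2$-type bookkeeping, the total number of signal coordinates that are ``active'' past $t_r$ is, in expectation, of order $s \cdot \P_{\bm p}(|D_j|\ge t_r)$, and one checks this exceeds the null fluctuation $\sqrt{d^{1-r}}$ precisely when $\liminf \varepsilon/\varepsilon_2(n,d,s) > \sqrt{C(\alpha)}$. The two-part form of $C(\alpha)$ arises exactly as in the Gaussian HC analysis: for $\alpha < 3/4$ the optimal detection threshold $t$ is $t_1 = \sqrt{2\log d}$ (the ``small-$t$'' or $\chi^2$-flavored regime contributes), giving $C(\alpha) = \alpha - 1/2$, while for $\alpha \ge 3/4$ one optimizes over $t_r$ with $r = 4C^* < 1$ and the calibration $r = \min\{1, 4C^*\}$ yields $C(\alpha) = (1-\sqrt{1-\alpha})^2$; I would carry out this optimization by matching the exponent of the signal count $d^{1-\alpha} \cdot d^{-(\sqrt r - \sqrt{C^*})_+^2 \cdot \text{(something)}}$ against the null exponent $d^{(1-r)/2}$.

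The remaining step is a variance bound under the alternative: one needs $\Var_{\bm p}(HC(t_r))$ to be comparable to (or dominated by) its null analogue so that a one-sided Chebyshev argument shows $GHC(t_r) \to \infty$ in probability; here I would split the sum into the $d-s$ null-like coordinates (handled as under $H_0$) and the $s$ signal coordinates (handled by the crude bound $\Var(\bm 1\{|D_j|\ge t\}) \le \P_{\bm p}(|D_j|\ge t)$), and note that because $s = d^{1-\alpha} = o(d)$ and the indicator variances are all at most the mean probabilities, the contamination from signal coordinates does not inflate the variance beyond a constant factor. The main obstacle I expect is twofold: first, getting the Poisson moderate deviation estimates sharp enough — uniformly over the signal coordinates and over the (finitely many) thresholds $t_r$ — to produce the \emph{exact} constant $\sqrt{C(\alpha)}$ rather than just a rate, which is the whole point of assuming $n \gg d\log^3 d$; and second, correctly optimizing the threshold $r$ and verifying the phase transition at $\alpha = 3/4$, i.e.\ showing that $r=1$ (the max-type threshold $t_1$, which is why the Bonferroni correction with the max tests of Lemma~\ref{lm:threshold_I} is needed to cover the boundary case) is optimal for $\frac12 < \alpha < \frac34$ while an interior $r<1$ is optimal for $\alpha \ge \frac34$. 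Once these deviation and optimization steps are in place, combining $\P_{H_0}(\phi=1)\to 0$ and $\sup_{\bm p}\P_{\bm p}(\phi=0)\to 0$ gives $\overline{\cR}_{n,d}(\phi, s,\varepsilon)\to 0$, which is the claim.
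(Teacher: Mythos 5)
Your overall architecture matches the paper's: Chebyshev plus a union bound over the threshold grid for the Type I error, a signal/noise decomposition of $GHC(t_r)$ under the alternative, Poisson moderate deviations (where $n\gg d\log^3 d$ enters), and a second-moment argument showing the signal part exceeds the cutoff. But there is a genuine gap, and it happens to be the technical heart of the paper's proof: uniformity over the composite alternative. The class $\cP(U[d],s,\varepsilon)$ constrains only $\|\bm p-U([d])\|_1\ge\varepsilon$ together with the per-coordinate cap $|\Delta_j|\le\sqrt{2\log d/(nd)}$, so the $L_1$ mass may be allocated very unevenly across the $s$ signal coordinates --- for instance, a small fraction of them at the maximal perturbation and the rest nearly null. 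Your step ``the total number of signal coordinates that are active past $t_r$ is, in expectation, of order $s\cdot\P_{\bm p}(|D_j|\ge t_r)$'' silently assumes all signal coordinates carry the same perturbation $\varepsilon/s$. To prove the stated claim for \emph{every} $\bm p$ in the class you must lower-bound
\begin{equation*}
\inf\left\{\sum_{j\in S}\left(\bar\Phi(t_r-y_j)+\bar\Phi(t_r+y_j)\right):\ \sum_{j\in S}y_j\ge s\sqrt{2C^*\log d},\ 0\le y_j\le\sqrt{2\log d}\right\},
\end{equation*}
where $y_j$ is the standardized perturbation of coordinate $j$. The paper resolves this variational problem in Lemma~\ref{lm:F} via a Lagrangian argument on $G_{\hat\lambda}(y)=F_r(y)-\hat\lambda y$, showing the minimizer is the equal-allocation point $y_j\equiv\sqrt{2C^*\log d}$; this requires a delicate convexity/concavity analysis of $G_{\hat\lambda}$ split into the cases $r=1$ and $r=4C^*$ (Observation~\ref{obs:G_tr}), and it is exactly this analysis --- not the heuristic exponent matching you describe --- that certifies the form of $C(\alpha)$ and the transition at $\alpha=3/4$. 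Without it your argument covers only the single ``flat'' alternative, not the uniform claim.

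Two smaller points. First, the index set $\{\sqrt{2L\log d}:L\in(0,5)\}\cap\mathbb{N}$ consists of the integers $t$ with $0<t^2<10\log d$, hence has $\Theta(\sqrt{\log d})$ elements, not ``at most $5$''; the union bound still closes since the per-threshold Chebyshev bound is $1/\log^2 d$. Second, your remark that the normalizing denominator $\sqrt{\sum_j\P_{H_0}(|D_j|\ge t)(1-\P_{H_0}(|D_j|\ge t))}$ tends to infinity fails at $r=1$ (it can be $O(1)$ up to polylogarithmic factors), but this is harmless because $GHC(t)$ is variance-normalized under $H_0$ by construction.
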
 \medskip
	
\noindent \textit{Proof of Lemma} \ref{lm:threshold_II}: Note that for any $t\in \{\sqrt{2L\log{d}}: L\in (0,5)\}\cap \mathbb{N}$, $\E_{H_0}[GHC(t)]=0$ and $\Var_{H_0}[GHC(t)]=1$. Hence, 
	$$\P_{H_0}\left(\max_{t\in \left\{\sqrt{2L\log{d}}: ~ L \in (0,5) \right\} \bigcap \mathbb{N}} |GHC(t)| > \log d\right)\leq \frac{\sqrt{5\log{d}}}{\log^2{d}} \rightarrow 0,$$
	which shows that the probability of Type I error goes to zero. 
	
	Next, we consider the probability of Type II error. To this end we will show that $GHC(t_r)$ beats the cut-off of $\log{d}$ with high probability. However, $t_r$ might not always be an integer (and hence not automatically a member of $\{\sqrt{2L\log{d}}: L\in (0,5)\}\cap \mathbb{N}\}$). But, our proof goes through for any $GHC(t_r')$ whenever $t_r'=(1+o(1))t_r$ and hence, the result will also hold for $GHC(\lceil t_r\rceil)$. Therefore, to keep notation simple we only show that $GHC(t_r)$ beats the cut-off of $\log{d}$ with high probability. For this, suppose $\bm p \in \mathcal{P}(U([d]), s, \varepsilon)$ and $S \subseteq \{1, 2, \ldots, d\}$ be the subset where the $\bm p$ differs from $1/d$. Then 
	\begin{align}
	GHC(t_r) = T_1(t_r)+ T_2(t_r), 
	\end{align}
	where 
	\begin{align}\label{eq:T1t}
	T_1(t_r) & := \frac{\sum_{j \in S} \{\bm 1\{ |D_j| \geq t_r \} - \P_{H_0}(|D_j| \geq t)\}}{\sqrt{\sum_{j=1}^d  \P_{H_0}(|D_j| \geq t)(1-\P_{H_0}(|D_j| \geq t_r))}}  \\ 
	T_2(t_r) & := \frac{\sum_{j \notin S} \{\bm 1\{ |D_j| \geq t_r \} - \P_{H_0}(|D_j| \geq t_r)\}}{\sqrt{\sum_{j=1}^d  \P_{H_0}(|D_j| \geq t_r)(1-\P_{H_0}(|D_j| \geq t_r))}} . \label{eq:T2t}
	\end{align} 
	Note that, for any $t \in \R$ and $T_2(t)$ as defined in \eqref{eq:T2t} above, $\E_{\bm p}[T_2(t)]=0$ and $\Var_{\bm p}[T_2(t)]=1$. Hence, for any $t \in \R$, $\P_{\bm p}(|T_2(t)| > 2 \log d)\rightarrow 0$. Therefore, the power of the HC test can be bounded below through: 
\begin{align*}
    \P_{\bm p}(|GHC(t_r)| > \log d) \geq \P_{\bm p}(T_1(t_r) > 2\log d \text{ and } |T_2(t_r)| \leq \log d)\rightarrow 1,
\end{align*}
	where the last step uses Lemma \ref{lm:T1_S} below.

	\begin{lem}\label{lm:T1_S}
		$\P_{\bm p}(T_1(t_r) > 2 \log d ) \rightarrow 1$. 
	\end{lem}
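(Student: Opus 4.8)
The plan is to show that the numerator of $T_1(t_r)$ is large compared to the common denominator (which is the same as in $GHC(t_r)$), with probability tending to one. Write the denominator as $V_d := \sqrt{\sum_{j=1}^d \P_{H_0}(|D_j|\geq t_r)(1-\P_{H_0}(|D_j|\geq t_r))}$. Since $D_j$ under $H_0$ is a centered-and-scaled $\dPois(n/d)$ variable, a moderate deviation (Cramér-type) estimate for the Poisson distribution --- valid because $n\gg d\log^3 d$, so $t_r^2 = 2r\log d = o((n/d)^{1/3})$ --- gives $\P_{H_0}(|D_j|\geq t_r) = d^{-r+o(1)}$ uniformly in $j$. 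Hence $V_d = d^{(1-r)/2 + o(1)}$ (since $r\le 1$, the $1-\P_{H_0}(\cdot)$ factor is $1+o(1)$). So it suffices to show the numerator $\sum_{j\in S}\bigl(\mathbf 1\{|D_j|\ge t_r\} - \P_{H_0}(|D_j|\ge t_r)\bigr)$ exceeds $2\log d \cdot d^{(1-r)/2+o(1)}$ with high probability, and since $|S| = s = d^{1-\alpha}$ the centering term $\sum_{j\in S}\P_{H_0}(|D_j|\ge t_r) = d^{1-\alpha - r + o(1)}$ is negligible relative to the target as long as the bulk term dominates; so really the crux is a lower bound on $\sum_{j\in S}\mathbf 1\{|D_j|\ge t_r\}$.

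For the bulk term, I would estimate, for $j\in S$, the probability $q_j := \P_{\bm p}(|D_j|\ge t_r)$. Here $Z_j\sim\dPois(np_j)$ with $np_j = n/d + n\Delta_j$ (recall $\Delta_j = 1/d - p_j$, and we are in the case $d\max_j|\Delta_j|\le \sqrt{2d\log d/n}$, so $n|\Delta_j|\le \sqrt{2n\log d/d}$ and hence $np_j = (n/d)(1+o(1))$). Writing $D_j = (Z_j - n/d)/\sqrt{n/d}$, the event $\{D_j \ge t_r\}$ corresponds to $Z_j$ exceeding its mean $np_j$ by roughly $(t_r - \sqrt{n\Delta_j^2 d/ \cdots})$ standard deviations; more precisely a mean-shift of $n\Delta_j$ translates, in units of $\sqrt{n/d}$, to a shift of $d\Delta_j \cdot \sqrt{n}/\sqrt{d} = \Delta_j\sqrt{nd}$. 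On the alternative prior used for the upper bound, the relevant coordinates have $\Delta_j = \pm\varepsilon/s$, so $\Delta_j\sqrt{nd} = \pm(\varepsilon/s)\sqrt{nd} = \mp\sqrt{2C^*\log d}$ up to the definition of $C^* = \varepsilon/\varepsilon_2(n,d,s)$ --- wait, more carefully $(\varepsilon/s)\sqrt{nd} = (\varepsilon/\varepsilon_2)\cdot(\varepsilon_2/s)\sqrt{nd} = C^*\sqrt{2\log d}$. Applying the Poisson moderate deviation bound again to the shifted mean, $q_j = d^{-(\sqrt r - \sqrt{2C^*}/\sqrt 2 \cdot \text{something})^2 + o(1)}$; the upshot, after matching constants via the definition $r = \min\{1,4C^*\}$ and $C^* > \sqrt{C(\alpha)}$ under the hypothesis $\varepsilon\gg\varepsilon_2$, is that $\sum_{j\in S} q_j = |S|\cdot q_j \ge d^{1-\alpha - (\sqrt r - \sqrt{2C^*})_+^2 + o(1)}$, and the choice of $r$ is exactly calibrated so that this exceeds $V_d \cdot\log d = d^{(1-r)/2+o(1)}$ by a polynomial factor in $d$. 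Then I would conclude by a second-moment / Chebyshev (or Bernstein, using independence of the $Z_j$'s across $j$) argument: $\sum_{j\in S}\mathbf 1\{|D_j|\ge t_r\}$ concentrates around its mean $\sum_{j\in S}q_j$, and since the mean is polynomially larger than $V_d\log d$ while the variance is at most the mean, the sum exceeds $2V_d\log d$ with probability $\to 1$, giving $T_1(t_r) > 2\log d$ whp.

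The main obstacle I anticipate is the bookkeeping in the moderate deviation estimates: one must (i) justify uniform Cramér-type expansions $\P(\dPois(\lambda)\ge \lambda + x\sqrt\lambda) = \bar\Phi(x)(1+o(1))e^{o(x^3/\sqrt\lambda)}$ in the regime $x = O(\sqrt{\log d})$, $\lambda = n/d \gg \log^3 d$ (this is precisely where the $\log^3 d$ gap in Theorem \ref{thm:one_sample_alphageqhalf} enters, via Remark \ref{remark:log_gap}), and (ii) carefully track the exponent of $d$ through the mean-shift so that the three-way interplay between $\alpha$, $r$, and $C^*$ --- including the piecewise definition of $C(\alpha)$ separating the $\frac12<\alpha<\frac34$ and $\alpha\ge\frac34$ regimes --- produces a strictly positive power of $d$ margin. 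The case split in $r = \min\{1,4C^*\}$ is the device that handles both pieces of $C(\alpha)$ simultaneously: for $\alpha\ge 3/4$ the optimal threshold is interior ($r = 4C^* < 1$, so $\sqrt r - \sqrt{2C^*}$ type terms must be computed), while for $\alpha < 3/4$ one takes $r = 1$ and the signal coordinates essentially always exceed the threshold. I would handle these two sub-cases separately in the computation of $\sum_{j\in S} q_j$, but the concentration step at the end is identical in both.
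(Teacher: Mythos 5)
Your overall architecture matches the paper's: estimate the denominator $V_d=d^{(1-r)/2+o(1)}$ via a Cram\'er-type Poisson moderate deviation (this is where $n\gg d\log^3 d$ enters), show the mean of the numerator dominates both the centering term $s\,\P_{H_0}(|D_1|\geq t_r)$ and $V_d\log d$ by a positive power of $d$, and finish with Chebyshev using $\Var\le$ mean for the sum of independent indicators. The concentration step and the denominator estimate are fine.

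The genuine gap is in how you compute $\sum_{j\in S}q_j$: you evaluate it only ``on the alternative prior used for the upper bound,'' taking $\Delta_j=\pm\varepsilon/s$ for all $j\in S$. But this lemma is part of the Type II error analysis, where there is no prior: the bound must hold \emph{uniformly} over every $\bm p\in\cP(U([d]),s,\varepsilon)$ with $d\max_j|\Delta_j|\le\sqrt{2d\log d/n}$, i.e.\ over every configuration $(y_j)_{j\in S}$ with $0<y_j\le\sqrt{2\log d}$ and $\sum_{j\in S}y_j\gtrsim s\sqrt{\log d}$, not just the flat one. The missing step is precisely the variational problem the paper solves in Lemma \ref{lm:F}: showing that $\inf\sum_{j\in S}\bigl(\bar\Phi(t_r-y_j)+\bar\Phi(t_r+y_j)\bigr)$ subject to the $L_1$ constraint is attained at the equal-signal configuration. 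This is not automatic from convexity --- in the regime $r=4C^*<1$ the Lagrangian $G_{\hat\lambda}(y)=F_r(y)-\hat\lambda y$ is \emph{not} convex on $(0,\sqrt{2\log d}]$, and the paper needs the separate analysis in Observation \ref{obs:G_tr} (comparing $G_{\hat\lambda}$ near $t_r$ and at the right endpoint $\sqrt{2\log d}$) to rule out mass-concentrating configurations with a few large $y_j$'s and many tiny ones. Without this reduction your exponent count for $\sum_{j\in S}q_j$ proves nothing about general alternatives; an adversary could in principle spread the $L_1$ budget so that fewer coordinates cross the threshold $t_r$. Secondarily, your constant-tracking is left as ``something'' at exactly the point where the calibration $r=\min\{1,4C^*\}$ against the two branches of $C(\alpha)$ must be made precise; that computation (Mills-ratio comparison of $sF_r(\sqrt{2C^*\log d})$ with $s\bar\Phi(t_r)$ and with $V_d$) is what delivers the $d^{\eta}$ margin and cannot be deferred.
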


	\begin{proof} 
	We claim that to prove Lemma \ref{lm:T1_S} it suffices to show the following two facts: 
		\begin{align}\label{eq:D1_threshold}
		\inf_S \E_{\bm p} \left[ \sum_{j \in S} \bm 1\{ |D_j| \geq t_r \}  \right] \gg s \P_{H_0}(|D_1| \geq t_r),  
		\end{align} 
			and 
			\textcolor{black}{\begin{align}\label{eq:T1_I}
		\inf_S \E_{\bm p}\left[T_1(t_r) \right] \gtrsim d^\eta, \quad \text { for some } \eta > 0. 
		\end{align} }
 	
To see this, note that by a Cramer-type moderate deviation inequality for independent sums \cite[Chapter 8]{petrov1975independent}, whenever $t \ll n^{\frac{1}{6}}$, 
	\begin{align}\label{eq:D1_tail}		
			\P_{H_0}(D_1 \geq t) = (1+o(1))\bar \Phi(t) \quad \text{and} \quad  \P_{H_0}(D_1 \leq - t) = (1+o(1))\bar \Phi(t) , 
			\end{align} 
	 where $\bar \Phi(t)= 1-\Phi(t)$ is the upper tail of the standard normal distribution. This implies, since $t_r = \sqrt{2 r \log d}$,  
\begin{align}\label{eq:D1}
\P_{H_0}(|D_1| \geq t_r)(1-\P_{H_0}(|D_1| \geq t_r)) & = 2  (1+o(1)) \bar \Phi(t_r) (1-2\bar \Phi(t_r)) \nonumber \\ 
& = (1+o(1))  \frac{2  \phi(t_r)}{\sqrt{t_r}}  \gtrsim \frac{1}{d^r \log^{\frac{1}{4}} d} . 
\end{align}
Hence,  
		\begin{align}
		\Var_{\bm p}[T_1(t_r)] \leq  &  \frac{\E_{\bm p} \left[ \sum_{j \in S} \bm 1\{ |D_j| \geq t_r \} \right] }{ d \P_{H_0}(|D_1| \geq t_r)(1-\P_{H_0}(|D_1| \geq t_r)) } \nonumber \\ 
		& = \frac{(1+o(1)) \left(\E_{\bm p} \left[ \sum_{j \in S} \bm 1\{ |D_j| \geq t_r \} \right] - s \P_{H_0}(|D_1| \geq t_r ) \right) }{ d \P_{H_0}(|D_1| \geq t_r)(1-\P_{H_0}(|D_1| \geq t_r)) } \tag*{(by \eqref{eq:D1_threshold})} \nonumber \\ 
		& = \frac{(1+o(1)) \E_{\bm p}[T_1(t_r)]}{\sqrt{d \P_{H_0}(|D_1| \geq t_r)(1-\P_{H_0}(|D_1| \geq t_r)) }} \tag*{(recall definition from \eqref{eq:T1t})} \nonumber \\
		& \lesssim \E_{\bm p}[T_1(t_r)] d^{\frac{r}{2}-\frac{1}{2}}\log^{\frac{1}{8}} d, \nonumber 
		\end{align} 
	where the last step uses \eqref{eq:D1}. Hence, using \eqref{eq:T1_I}, for $d$ large enough, 
		\begin{align}
		\P_{\bm p}(T_1(t_r) \leq 2 \log d ) \lesssim  \frac{\Var_{\bm p}\left[T_1(t_r) \right] }{(\E_{\bm p}[T_1(t_r)])^2} \lesssim \frac{  \log^{\frac{1}{8}} d}{d^{\frac{1}{2}-\frac{r}{2} + \eta} \E_{\bm p}[T_1(t_r)] }  \ll 1, \nonumber 
		\end{align}
		since $r \leq 1$ and $\eta > 0$. This shows Lemma \ref{lm:T1_S} holds, whenever \eqref{eq:T1_I} and 
		\eqref{eq:D1_threshold} hold.  
\end{proof} \medskip
		
		\noindent{\it Proof of \eqref{eq:D1_threshold}}: Throughout we will assume that $\max_{j \in [d]} |\Delta_j| \leq \sqrt{2 \log d/n}$. 
		For $j \in [d]$, denote 
		$$D_j'=\frac{Z_j - np_j}{\sqrt{n p_j}}.$$ Then, recalling \eqref{eq:D}, 
		\begin{align}\label{eq:ZD}
		D_j= \frac{Z_j -n/d}{\sqrt{n/d}} =  D_j'  \sqrt{ d p_j }  -  \Delta_j \sqrt{nd}, 
		\end{align} 
		since $\Delta_j=\frac{1}{d}-p_j$. Note that $$\max_{j \in [d]}|d p_j-1| = d \max_{j \in [d]}|\Delta_j| = O\left(\sqrt{\frac{d \log d}{n}}\right) = o(1).$$ Define $\bar \varepsilon_j = |\Delta_j| \sqrt{\frac{nd}{2 \log d}} $, for $j \in [d]$. {Then by a Poisson moderate deviation-type inequality \cite[Chapter 8]{petrov1975independent},}
		\begin{align}
		\E_{\bm p} & \left[ \sum_{j \in S} \bm 1\{|D_j| \geq  t_r\} \right] \nonumber \\ 
		& = (1+o(1))   \sum_{j \in S}  \left\{ \bar \Phi\left(\frac{t_r-\Delta_j \sqrt{nd}}{ \sqrt{ d p_j } } \right) + \bar \Phi\left(\frac{t_r+\Delta_j \sqrt{nd}}{ \sqrt{ d p_j } } \right) \right\} \quad (\text{with the } 1+o(1) \ \text{being free of $S$})\nonumber \\ 
		& = (1+o(1))   \sum_{j \in S}  \left\{ \bar \Phi\left(\sqrt{\frac{ 2 \log d}{ d p_j } } \left(\sqrt r -\Delta_j \sqrt{\frac{nd}{2 \log d}} \right) \right) +  \bar \Phi\left(\sqrt{\frac{ 2 \log d}{ d p_j } } \left(\sqrt r + \Delta_j \sqrt{\frac{nd}{2 \log d}} \right) \right) \right\} \nonumber \\ 
		& = (1+o(1))   \sum_{j \in S}  \left\{ \bar \Phi\left(\sqrt{\frac{ 2 \log d}{ d p_j } } \left(\sqrt r - \bar \varepsilon_j \right) \right) +  \bar \Phi\left(\sqrt{\frac{ 2 \log d}{ d p_j } } \left(\sqrt r + \bar \varepsilon_j \right) \right) \right\} \nonumber \\ 
		& = (1+o(1))   \sum_{j \in S}  \left\{ \bar \Phi\left(\sqrt{2 \log d}  \left(\sqrt r - \bar \varepsilon_j \right) \right) +  \bar \Phi\left(\sqrt{ 2 \log d } \left(\sqrt r + \bar \varepsilon_j \right) \right) \right\},\nonumber 
		\end{align}  
		where the last step uses Observation \ref{obs:normal_tail}. This implies, 
		\begin{align}\label{eq:D1_function}
	\inf_S \E_{\bm p}  \left[ \sum_{j \in S} \bm 1\{|D_j| \geq  t_r \} \right] & = (1+o(1))\inf_{S} \sum_{j\in S}\left\{\bar{\Phi}(t_r-y_j)+\bar{\Phi}(t_r+y_j)\right\}. 
		\end{align} 
where $y_j:=\sqrt{2 \bar \varepsilon_j \log{d} }$, for $j \in [d]$. Therefore, to show \eqref{eq:D1_threshold} we have to lower bound the variational problem in the RHS of \eqref{eq:D1_function} such that the constraint $\|\bm p- U([d])\|_1\geq s\sqrt{2C^*\log{d}/(nd)}$ is satisfied. This constraint can be written as 
\begin{align*}
 \sum_{j\in S}y_j\geq s\sqrt{2C^*\log{d}}, \quad \text{ where } 0<y_j\leq \sqrt{2\log{d}}, \text{ for } j \in S . 
 \end{align*} 
To this end, we appeal to the strategy employed in \cite[Lemma 6.2 and Lemma 7.4]{ingster2010detection}. To operationalize the argument, consider the function 
\begin{align*}
F_r(y)=\bar{\Phi}(t_r-y)+\bar{\Phi}(t_r+y),
\end{align*}
where $0<y\leq \sqrt{2\log{d}}$. The result in \eqref{eq:D1_threshold} will then follow from the following lemma:

\begin{lem}\label{lm:F}
If there exists $\lambda>0$ such that
		\begin{align}
		\inf_{y\in (0,\sqrt{2\log{d}}]}\left(F_r(y)-\lambda y\right)=F_r(\sqrt{2C^*\log{d}})-\lambda\sqrt{2C^*\log{d}}.\label{eqn:hc_optimization}
		\end{align} 
then the following holds: 
\begin{align*}
    \inf\left\{\sum_{j\in S}\left(\bar{\Phi}(t_r-y_j)+\bar{\Phi}(t_r+y_j)\right): 0\leq y_j\leq \sqrt{2\log{d}},
    \sum_{j\in S} y_j\geq s\sqrt{2C^*\log{d}}\right\}=sF_r(\sqrt{2C^*\log{d}})
\end{align*}
\end{lem}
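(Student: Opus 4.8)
The plan is to treat Lemma~\ref{lm:F} as a finite-dimensional constrained minimization and to settle it by a Lagrangian (supporting-line) argument that invokes the hypothesis~\eqref{eqn:hc_optimization} almost verbatim. Write the feasible set as
\begin{equation*}
\mathcal{F}:=\Bigl\{(y_j)_{j\in S}\ :\ 0\le y_j\le\sqrt{2\log d}\ \text{ for all } j\in S,\quad \sum_{j\in S}y_j\ge s\sqrt{2C^*\log d}\Bigr\},
\end{equation*}
and recall that $|S|=s$. The easy half is immediate: assuming $\mathcal{F}$ is nonempty — equivalently $\sqrt{2C^*\log d}\le\sqrt{2\log d}$, which is guaranteed in the parameter regime in which this lemma is invoked — the symmetric vector $y_j\equiv\sqrt{2C^*\log d}$ lies in $\mathcal{F}$ and saturates the sum constraint with equality, so
\begin{equation*}
\inf_{(y_j)\in\mathcal{F}}\sum_{j\in S}\bigl(\bar\Phi(t_r-y_j)+\bar\Phi(t_r+y_j)\bigr)\le s\bigl(\bar\Phi(t_r-\sqrt{2C^*\log d})+\bar\Phi(t_r+\sqrt{2C^*\log d})\bigr)=sF_r(\sqrt{2C^*\log d}).
\end{equation*}

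For the matching lower bound, let $\lambda>0$ be the multiplier supplied by~\eqref{eqn:hc_optimization}. For any $(y_j)_{j\in S}\in\mathcal{F}$ I would split
\begin{equation*}
\sum_{j\in S}\bigl(\bar\Phi(t_r-y_j)+\bar\Phi(t_r+y_j)\bigr)=\sum_{j\in S}\bigl(F_r(y_j)-\lambda y_j\bigr)+\lambda\sum_{j\in S}y_j
\end{equation*}
and bound the two pieces separately. By~\eqref{eqn:hc_optimization}, every summand of the first sum with $y_j\in(0,\sqrt{2\log d}]$ is at least $F_r(\sqrt{2C^*\log d})-\lambda\sqrt{2C^*\log d}$; for the boundary value $y_j=0$ one lets $y\downarrow 0$ in~\eqref{eqn:hc_optimization} and uses continuity of $F_r$ at the origin to obtain $F_r(0)\ge F_r(\sqrt{2C^*\log d})-\lambda\sqrt{2C^*\log d}$ as well, so the bound holds for all admissible $y_j$. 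Since $\lambda>0$, the constraint $\sum_{j\in S}y_j\ge s\sqrt{2C^*\log d}$ gives $\lambda\sum_{j\in S}y_j\ge\lambda s\sqrt{2C^*\log d}$. Adding the two estimates, the $\lambda$-terms cancel and the objective is at least $sF_r(\sqrt{2C^*\log d})$ for every feasible vector; taking the infimum over $\mathcal{F}$ closes the gap.

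The argument is short precisely because all the analytic substance has been pushed into the hypothesis~\eqref{eqn:hc_optimization}, so I expect the real obstacle to lie in establishing that hypothesis (verified separately, as an input to the present lemma): one must exhibit a supporting line of strictly positive slope for $F_r$ at the point $\sqrt{2C^*\log d}$ over $(0,\sqrt{2\log d}]$. This is not automatic, because $F_r(y)=\bar\Phi(t_r-y)+\bar\Phi(t_r+y)$ need not be convex on the whole interval when $r<1$: differentiating twice yields $F_r''(y)=(t_r-y)\phi(t_r-y)+(t_r+y)\phi(t_r+y)$, which changes sign once $y>t_r$. Following the template of \cite[Lemma~6.2 and Lemma~7.4]{ingster2010detection}, I would pass to the lower convex envelope of $F_r$, read off the slope $\lambda$ of its supporting line at $\sqrt{2C^*\log d}$, and check $\lambda>0$ from the monotonicity $F_r'(y)=\phi(t_r-y)-\phi(t_r+y)>0$ on $(0,\infty)$; this reduces to elementary Gaussian tail and density estimates at the scale $t_r=\sqrt{2r\log d}$ together with the choice $r=\min\{1,4C^*\}$. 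The only other point needing care inside the proof of Lemma~\ref{lm:F} itself is the $y_j=0$ boundary case noted above, which the limiting argument disposes of cleanly.
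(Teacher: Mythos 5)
Your proof is correct and follows the same supporting-line (Lagrangian) argument that the paper implicitly relies on via its reference to Ingster's strategy: the upper bound from the symmetric feasible point, and the lower bound from splitting the objective as $\sum_j(F_r(y_j)-\lambda y_j)+\lambda\sum_j y_j$ and using the hypothesis together with $\lambda>0$. You also correctly locate the real analytic work in verifying \eqref{eqn:hc_optimization}, which the paper (and your outline) handles separately, and your treatment of the $y_j=0$ boundary case by continuity is a fine way to close the small mismatch between the open interval in the hypothesis and the closed constraint set.
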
 

%

\noindent {\it Proof of} \eqref{eqn:hc_optimization}: To prove \eqref{eqn:hc_optimization}, 
let $$G_{\lambda}(y)=F_r(y)-\lambda y$$ and hence, $G'_{\lambda}(y)=F'_r(y)-\lambda $. Now, if we want $G'_{\lambda}(\hat y)=0$ at $\hat y:=\sqrt{2C^*\log{d}}$, then $\lambda=\hat \lambda:=\phi(t_r-\hat y)-\phi(t_r+\hat y)$. This is a feasible choice, since by a direct calculation it can be checked that $\liminf \hat \lambda>0$. To show that this is choice of $\hat y$ is indeed a global minimum of $G_{\hat \lambda}(y)$ in \eqref{eqn:hc_optimization}, we will next divide our analysis in two cases.
\\

\noindent{\it Case} 1: $r=1$. In this case, we can safely assume $C^*\leq 1$. 
Now, note that 
\begin{align*}
\inf_{y\in (0,\sqrt{2\log{d}}]}G''_{\hat \lambda}(y)&=\inf_{y\in (0,\sqrt{2\log{d}}]}\left\{(t_r+y)\phi(t_r+y)+(t_r-y)\phi((t_r-y))\right\}>0, 
\end{align*} 
uniformly in $n,d,s$, since $t_r-y=\sqrt{2\log{d}} -y \geq 0$, using $C^*\leq 1=r$. This shows, in the case $r=1$, the function $G_{\hat \lambda}(y)$ is concave in the domain $y\in (0,\sqrt{2\log{d}}]$ and  hence, $\hat y$ is a global minima of $G_{\hat \lambda}(y)$ in this domain. \\ 

\noindent{\it Case} 2: $r=4C^*$. In this case $G''_{\hat \lambda}(y)$ can potentially be negative at some values $0<y < \sqrt{2\log{d}}$ and hence, a direct convexity argument does not work. We therefore need to study the function $G_{\hat \lambda}$ a little more closely. To this end, first note that $\inf_{y\in (0,t_r]}G''_{\hat \lambda}(y)>0$. Hence, $\hat y$ is the global minimum of $G_{\hat \lambda}(\cdot)$ over the sub-domain $(0,t_r]$. Next, note that for any $y>t_r$, 
\begin{align}\label{eq:G_lambda}
G_{\hat \lambda}(\hat y)-G_{\hat \lambda}(y)&=\left[\bar{\Phi}(t_r-\hat y)-\bar{\Phi}(t_r-y)\right]+\left[\bar{\Phi}(t_r+\hat y)-\bar{\Phi}(t_r+y)\right]+\hat \lambda(y-\hat y) \nonumber \\ 
& = \left[\bar{\Phi}(\hat y)-\bar{\Phi}(t_r-y)\right]+\left[\bar{\Phi}(3 \hat y)-\bar{\Phi}(t_r+y)\right]+\hat \lambda(y-\hat y), 
\end{align}
since $t_r = \sqrt{2 r \log d} = \sqrt{8 C^* \log d} = 2 \hat y$. 
Now, we have the following observation:

\begin{obs}\label{obs:G_tr} Fix $0< \delta<\frac{1}{2}$. Then for any $M>0$, exists a $d_{\delta}>0$ large enough such that 
\begin{align}\label{eq:G_tr}
G_{\hat \lambda}(t_r(1+\theta))>G_{\hat \lambda}(\hat y)+\delta,
\end{align}
for $\theta\in [0,\frac{M}{\log{d}}]$ whenever $d\geq d_{\delta}$. Moreover, exists $M'>0$ such that uniformly over $\theta\geq \frac{M'}{\log{d}}$, 
\begin{align}\label{eq:G_hat}
G_{\hat \lambda}''(t_r(1+\theta))<0, 
\end{align} 
for $d$ large enough.  
\end{obs}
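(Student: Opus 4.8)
\emph{Proof proposal for Observation~\ref{obs:G_tr}.} The plan is to read both claims off the identity \eqref{eq:G_lambda}. In Case~2 we have $t_r=2\hat y$ with $\hat y=\sqrt{2C^*\log d}$, and $\hat\lambda=\phi(t_r-\hat y)-\phi(t_r+\hat y)=\phi(\hat y)-\phi(3\hat y)$, so $0<\hat\lambda<\phi(\hat y)$. Substituting $y=t_r(1+\theta)=2\hat y(1+\theta)$ into \eqref{eq:G_lambda} (which holds for all $y\ge t_r$, and at $y=t_r$ by continuity) and using $\bar\Phi(t_r-y)=\bar\Phi(-2\hat y\theta)=1-\bar\Phi(2\hat y\theta)$, I obtain
\begin{align*}
G_{\hat\lambda}(\hat y)-G_{\hat\lambda}(t_r(1+\theta))=\big[\bar\Phi(\hat y)-1+\bar\Phi(2\hat y\theta)\big]+\big[\bar\Phi(3\hat y)-\bar\Phi(2\hat y(2+\theta))\big]+\hat\lambda\hat y(1+2\theta).
\end{align*}

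To prove \eqref{eq:G_tr} I would bound the three brackets uniformly over $\theta\in[0,M/\log d]$. Since $\hat y\theta\le \hat y M/\log d=O(1/\sqrt{\log d})\to 0$, Lipschitz continuity of $\bar\Phi$ gives $\bar\Phi(2\hat y\theta)=\tfrac12+O(1/\sqrt{\log d})$; the normal tails $\bar\Phi(\hat y)$, $\bar\Phi(3\hat y)$ and $\bar\Phi(2\hat y(2+\theta))\le\bar\Phi(4\hat y)$ are all $o(1)$ (indeed polynomially small in $d$, as their arguments are of order $\sqrt{\log d}$); and $\hat\lambda\hat y(1+2\theta)\le 2\phi(\hat y)\hat y=O(d^{-C^*}\sqrt{\log d})=o(1)$. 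Hence $G_{\hat\lambda}(\hat y)-G_{\hat\lambda}(t_r(1+\theta))=-\tfrac12+o(1)$ uniformly in $\theta\in[0,M/\log d]$, so for $d$ large enough $G_{\hat\lambda}(t_r(1+\theta))>G_{\hat\lambda}(\hat y)+\delta$, using $\delta<\tfrac12$; this is \eqref{eq:G_tr}.

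For \eqref{eq:G_hat}, I would use $G_{\hat\lambda}''(y)=F_r''(y)=(t_r-y)\phi(t_r-y)+(t_r+y)\phi(t_r+y)$ (the linear term $\hat\lambda y$ contributing nothing). At $y=t_r(1+\theta)$ the first summand equals $-t_r\theta\,\phi(t_r\theta)<0$ and the second equals $t_r(2+\theta)\phi(t_r(2+\theta))>0$, so it suffices to show the ratio of the second to the modulus of the first is $<1$ for $\theta\ge M'/\log d$. A direct computation gives
\begin{align*}
\frac{t_r(2+\theta)\phi(t_r(2+\theta))}{t_r\theta\,\phi(t_r\theta)}=\frac{2+\theta}{\theta}\,e^{-\frac{t_r^2}{2}[(2+\theta)^2-\theta^2]}=\frac{2+\theta}{\theta}\,e^{-2t_r^2(1+\theta)}.
\end{align*}
Because $y\le\sqrt{2\log d}$ and $r=4C^*\le 1$ force $\theta$ into a bounded interval, the prefactor $\tfrac{2+\theta}{\theta}$ is $O(\log d)$ on $\theta\ge M'/\log d$, while $e^{-2t_r^2(1+\theta)}\le e^{-2t_r^2}=d^{-16C^*}$; thus the ratio is $O(d^{-16C^*}\log d)=o(1)$ and $G_{\hat\lambda}''(t_r(1+\theta))<0$ for $d$ large. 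One may take $M'=1$.

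The computations are otherwise routine; the only point that needs genuine care is the uniformity of the various $o(1)$ remainders over the window $\theta\in[0,M/\log d]$ in the proof of \eqref{eq:G_tr}. That becomes transparent once one observes that the entire constant-order contribution to the displayed identity comes from $-1+\bar\Phi(2\hat y\theta)\to-\tfrac12$, every remaining term being negligible precisely because the standard normal tails entering it are evaluated at arguments growing like $\sqrt{\log d}$ and hence decaying polynomially in $d$.
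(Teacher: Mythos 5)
Your proposal is correct. For \eqref{eq:G_tr} it is essentially the paper's argument: you expand the identity \eqref{eq:G_lambda} at $y=t_r(1+\theta)=2\hat y(1+\theta)$ and isolate the single constant-order contribution $\bar\Phi(t_r-y_\theta)=\Phi(2\hat y\theta)\to\tfrac12$, with all remaining terms (Gaussian tails at arguments of order $\sqrt{\log d}$ and the term $\hat\lambda\hat y(1+2\theta)\lesssim d^{-C^*}\sqrt{\log d}$) vanishing uniformly on $\theta\in[0,M/\log d]$; this is exactly how the paper obtains the limit $-\tfrac12$. For \eqref{eq:G_hat} you take a genuinely different, and in fact sharper, route: rather than writing $G''_{\hat\lambda}(t_r(1+\theta))=t_r[\psi(2+\theta)-\psi(\theta)]$ with $\psi(x)=x\phi(t_rx)$ and invoking monotonicity of $\psi$ (which requires $t_rx>1$, i.e.\ only gives negativity for $\theta\gtrsim 1/\sqrt{\log d}$), you bound the ratio of the positive to the negative summand by $\tfrac{2+\theta}{\theta}e^{-2t_r^2(1+\theta)}=O\bigl(\log d\cdot d^{-16C^*}\bigr)=o(1)$, which delivers the claim on the full range $\theta\ge M'/\log d$ stated in the observation. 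The only point you should make explicit is that this last estimate (and likewise $\hat\lambda\hat y\to0$) uses that $C^*$, hence $r=4C^*$, is bounded away from zero in Case~2 of the surrounding argument; this is implicit in the paper as well, so it is not a gap specific to your write-up.
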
 

\begin{proof} To begin with, set $y_\theta=t_r(1+\theta)$. Then, since $|y_\theta-t_r|\rightarrow 0$ for  $\theta\in [0,\frac{M}{\log{d}}]$,
$$\bar{\Phi}(\hat y)-\bar{\Phi}(t_r-y_\theta) \rightarrow -\tfrac{1}{2}.$$
Moreover, for any $y>0$, $\bar{\Phi}(3 \hat y)-\bar{\Phi}(t_r+y) \rightarrow 0$. Finally, uniformly for $\theta \in [0,\frac{M}{\log{d}}]$ we have, 
\begin{align}\label{eq:y_hat}
\hat \lambda(y_\theta-\hat y)  \lesssim \left(\phi(t_r-\hat y)-\phi(t_r+\hat y)\right)\sqrt{\log{d}} 
& = \left(\phi(\hat y)-\phi(3 \hat y)\right)\sqrt{\log{d}} \nonumber \\  
& \lesssim \log{d}/d^{C^*} \nonumber \\ 
& \rightarrow 0,
\end{align}
as $d\rightarrow \infty$. Hence, by \eqref{eq:G_lambda}, given $0<\delta<\frac{1}{2}$, there exists $d_{\delta}>0$ such that \eqref{eq:G_tr} holds.  

Now, we prove \eqref{eq:G_hat}. Towards this, note that
\begin{align*}
G_{\hat \lambda}''(t_r(1+\theta))=t_r\left[(2+\theta)\phi(t_r(2+\theta))-\theta\phi(-t_r\theta)\right] 
&=t_r\left[(2+\theta)\phi(t_r(2+\theta))-\theta\phi(t_r\theta)\right] \nonumber \\ 
& =t_r\left[\psi(2+\theta)-\psi(\theta)\right].
\end{align*}
where $\psi(x)=x\phi(t_rx)$. Now, note that there exists $M'>0$ such that $\psi'(x)=\phi(t_r(x))\left[1-(t_rx)^2\right]<0$ uniformly in $x>(\frac{M}{\sqrt{\log{d}}},B]$, for any $B>0$. Therefore, by taking $B=\sqrt{1/r}$ we have $G_{\hat \lambda}''(x)$ is negative for all $x\in [t_r(1+M/\sqrt{\log{d}}),\sqrt{2\log{d}})$, and hence concave in that neighborhood. 
\end{proof}

The result in Observation \ref{obs:G_tr} implies that checking 
 \begin{align}\label{eq:G_value}
  G_{\hat \lambda}(\hat y)<G_{\hat \lambda}(\sqrt{2\log{d}}), 
\end{align} 
for $d$ large enough, will complete the proof of Case 2.. To show this, note that, 
\begin{align*}
G_{\hat \lambda}(\hat y) & -G_{\hat \lambda}(\sqrt{2\log{d}}) \nonumber \\ 
&=\left[\bar{\Phi}(t_r-\hat y)-\bar{\Phi}(t_r-\sqrt{2\log{d}})\right]+\left[\bar{\Phi}(t_r+\hat y)-\bar{\Phi}(t_r+\sqrt{2\log{d}})\right]+\hat \lambda(\sqrt{2\log{d}}-\hat y) \nonumber \\ 
& < \left[\bar{\Phi}(\hat y)-\bar{\Phi}(t_r-\sqrt{2\log{d}})\right]+\left[\bar{\Phi}(3 \hat y)-\bar{\Phi}(t_r+\sqrt{2\log{d}})\right] +\hat \lambda(\sqrt{2\log{d}}-\hat y).
\end{align*}
Note that, as $d \rightarrow \infty$,  the first term on the RHS above converges to $-1$, the second term converges to $0$, and the third term converge to $0$ by arguments as in \eqref{eq:y_hat}). This implies \eqref{eq:G_value} for $d$ large enough.  \\

\noindent \noindent{\it Proof of  \eqref{eq:D1_threshold} and \eqref{eq:T1_I}}: First for the proof of \eqref{eq:D1_threshold} we note from Lemma \ref{lm:F} and \eqref{eq:D1_function} that
\begin{align*}
    \inf _{\bm p}\E_{\bm p} 
    \left[ \sum_{j \in S} \bm 1\{|D_j| \geq  t_r\} \right]& = (1+o(1)) \inf_{S} \sum_{j\in S}\left\{\bar{\Phi}(t_r-y_j)+\bar{\Phi}(t_r+y_j)\right\}\\
    &\geq (1+o(1))sF_r(\sqrt{2C^*\log{d}}) \\
    & \gg s\P_{H_0}(|D_1|\geq t_r),
\end{align*}
by direct calculations using Mill's ratio estimates and $r=\min\{4C^*,1\}$. This completes the proof of  \eqref{eq:D1_threshold}.

Next,
	\begin{align*}
		\E_{\bm p}[T_1(t_r)] &= \frac{\E_{\bm p} \left[ \sum_{j \in S} \bm 1\{ |D_j| \geq t_r \} \right]- s \P_{H_0} (|D_1| \geq t_r ) }{ \sqrt{d \P_{H_0}(|D_1| \geq t_r)(1-\P_{H_0}(|D_1| \geq t_r))}}\\
		&\geq \frac{(1+o(1))s\left\{ F_r(\sqrt{2C^*\log{d}})-\P_{H_0}(|D_1|\geq t_r)\right\} }{ \sqrt{d \P_{H_0}(|D_1| \geq t_r)(1-\P_{H_0}(|D_1| \geq t_r))}}\\
		& \gtrsim d^\eta, \quad \text { for some } \eta > 0,
	\end{align*}
	by calculations exactly parallel to the proof of \cite[Lemma 6.4 (a)]{mukherjee2018detection}. This completes the proof of \eqref{eq:T1_I}.
	

\subsubsection{Lower Bound for Theorem \em{\ref{thm:one_sample_alphageqhalf} (1)}}
	\label{sec:geqhalf_lower_bound_pf}
	
In this section we prove part (b) of Theorem \ref{thm:one_sample_alphageqhalf} (1), that is, the lower bound on the minimax risk in the regime $\alpha \leq \frac{1}{2}$ and $n \gtrsim d^{\frac{1}{2}+\alpha}$. Our proof uses the truncated second moment method of Ingster (as presented in \cite{butucea2013detection}) based on a suitable prior $\pi_n$ on  $\cP(U([d]), s, \varepsilon)$. To describe the prior assume, without loss of generality, $d$ and  $s$ are even. Choose a subset $S$ of size $s/2$ uniformly at random from the first half of the domain $\cD:=\{1, 2, \ldots, d/2\}$ and another subset $S'$ of size $s/2$ uniformly at random from the second half of the domain $\cD':=\{d/2+1, \ldots, 1\}$. Recall the definition of $C(\alpha)$ from \eqref{eq:threshold_III}. Throughout, we set $\eta^2= 2 C(\alpha)(1-\delta)\frac{\log{d}}{nd}$, for $0 < \delta < 1$ fixed, and define $\bm p^{S, S'} := (p_1, p_2, \ldots, p_d) \in U([d])$, where 
$$p_j= 
	\left\{
	\begin{array}{cc}
	\frac{1}{d} + \eta  &  \text{for }  j \in S, \\ 
	\frac{1}{d} - \eta &  \text{for } j \in S', \\
	\frac{1}{d} & \text{otherwise}. 
	\end{array}
	\right.
$$ 	
Clearly, $\bm p^{S, S'} \in \cP(U([d]), s, \varepsilon)$. To operationalize a truncated second moment argument we next introduce the good event
	\begin{align}
	\mathcal{G}=\left\{\sup_{1 \leq j \leq d}|D_j|\leq \sqrt{2\log{d}}\right\},\label{eqn:good_event_trunc_second_moment}	
	\end{align}
where $D_j$, for $j \in [d]$, is as defined in \eqref{eq:D}. Note that under the null $D_1, D_2, \ldots, D_d$ are i.i.d. $\dPois(n/d)$. Hence, by an union bound and \eqref{eq:D1_tail}, 
\begin{align}\label{eq:G} 
\P_{H_0}(\cG^c)  \lesssim d \P_{Z_1 \sim \dPois(n/d)}(|Z_1|> \sqrt{2 \log d}) & \lesssim  (1+o(1)) d \bar \Phi(\sqrt{2 \log d}) \nonumber \\ 
& \lesssim   (1+o(1)) \frac{d\phi(\sqrt{2 \log d})}{\sqrt{\log d}} \nonumber \\ 
& \rightarrow  0. 
\end{align}
Therefore, as in \cite[Section 5]{butucea2013detection}, to show the result in part (b) of Theorem \ref{thm:one_sample_alphageqhalf} (1), it suffices to prove the following estimates: 
$$\mathbb{E}_{H_0}[L_{\pi_n}\mathbf{1}\{\mathcal{G}\}]=1+o(1) \quad \text{and} \quad \mathbb{E}_{H_0}[L_{\pi_n}^2\mathbf{1}\{\mathcal{G}\}] \leq 1 + o(1).$$
These estimates are proved below in Lemma \ref{lm:LpiG} and Lemma \ref{lm:Lpi2G}, respectively.  


\begin{lem}\label{lm:LpiG} For $\pi_n$ and $\cG$ as defined above, $\mathbb{E}_{H_0}[L_{\pi_n}\mathbf{1}\{\mathcal{G}^c\}]=o(1)$. 
\end{lem}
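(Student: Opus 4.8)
The plan is to bound $\mathbb{E}_{H_0}[L_{\pi_n}\mathbf{1}\{\mathcal{G}^c\}]$ by a Cauchy--Schwarz split that is slightly more delicate than the trivial one, because $\mathbb{E}_{H_0}[L_{\pi_n}^2]$ itself is \emph{not} $o(1)$ in the sparse regime (that is the whole reason the truncation $\mathcal{G}$ is introduced in the first place). So instead of writing $\mathbb{E}_{H_0}[L_{\pi_n}\mathbf{1}\{\mathcal{G}^c\}]\le \sqrt{\mathbb{E}_{H_0}[L_{\pi_n}^2]}\sqrt{\P_{H_0}(\mathcal{G}^c)}$, I would first exchange expectation and prior: since $L_{\pi_n}=\mathbb{E}_{S,S'}[L_{\bm p^{S,S'}}]$ where $L_{\bm p^{S,S'}}$ is the (honest, untruncated) likelihood ratio for the fixed alternative $\bm p^{S,S'}$, Fubini gives
\begin{align}
\mathbb{E}_{H_0}[L_{\pi_n}\mathbf{1}\{\mathcal{G}^c\}]
= \mathbb{E}_{S,S'}\,\mathbb{E}_{H_0}[L_{\bm p^{S,S'}}\mathbf{1}\{\mathcal{G}^c\}]
= \mathbb{E}_{S,S'}\,\P_{\bm p^{S,S'}}(\mathcal{G}^c),\nonumber
\end{align}
using the standard change-of-measure identity $\mathbb{E}_{H_0}[L_{\bm p}\,\mathbf{1}\{A\}]=\P_{\bm p}(A)$. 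Thus it suffices to show $\P_{\bm p^{S,S'}}(\mathcal{G}^c)\to 0$ \emph{uniformly} over all admissible $S,S'$.

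Next I would estimate $\P_{\bm p^{S,S'}}(\mathcal{G}^c)$ by a union bound over the $d$ coordinates, as in \eqref{eq:G}. The coordinates $j\notin S\cup S'$ are exactly $\dPois(n/d)$ under $\bm p^{S,S'}$, so those contribute the same $o(1)$ bound as in \eqref{eq:G}. For the $s$ perturbed coordinates, $Z_j\sim\dPois(n/d\pm n\eta)$, and the shift in the standardized variable $D_j$ is of order $\sqrt{nd}\,\eta \asymp \sqrt{\log d}$; more precisely $\sqrt{nd}\,\eta = \sqrt{2C(\alpha)(1-\delta)\log d}$, which is strictly smaller than $\sqrt{2\log d}$ since $C(\alpha)<1$ for all $\alpha\in(\tfrac12,1)$ and $\delta>0$. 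Hence for each perturbed $j$, a Poisson moderate-deviation bound of the type already invoked via \eqref{eq:D1_tail} (Cramér-type, valid since $\sqrt{\log d}\ll n^{1/6}$ under $n\gg d\log^3 d$) gives
\[
\P_{\bm p^{S,S'}}(|D_j|>\sqrt{2\log d})\lesssim \bar\Phi\!\left(\sqrt{2\log d}-\sqrt{2C(\alpha)(1-\delta)\log d}\right)\lesssim d^{-(1-\sqrt{C(\alpha)(1-\delta)})^2+o(1)},
\]
and summing the $s=d^{1-\alpha}$ of these, together with the $d$ unperturbed ones, yields a bound $\lesssim d^{1-\alpha}\cdot d^{-(1-\sqrt{C(\alpha)(1-\delta)})^2+o(1)} + d\cdot \bar\Phi(\sqrt{2\log d})$. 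The second term is $o(1)$ by \eqref{eq:G}. For the first term I need $1-\alpha < (1-\sqrt{C(\alpha)(1-\delta)})^2$; a quick check of the two branches of $C(\alpha)$ in \eqref{eq:threshold_III} shows this holds with room to spare (for $\alpha\ge\tfrac34$ one has $C(\alpha)=(1-\sqrt{1-\alpha})^2$ so $\sqrt{C(\alpha)}=1-\sqrt{1-\alpha}$ and the exponent is $(1-\alpha)(1+o_\delta(1))$, which still beats $1-\alpha$ once $\delta>0$ forces a strict gap; for $\tfrac12<\alpha<\tfrac34$, $C(\alpha)=\alpha-\tfrac12<\tfrac14$ so the exponent exceeds $1/4>1-\alpha$ is false only near $\alpha=\tfrac12$ — I would need to handle that corner carefully, possibly keeping the exact constant rather than the crude $d^{1-\alpha}$ factor).

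The step I expect to be the main obstacle is precisely getting the uniformity in $S,S'$ together with a clean enough tail bound to beat the sparsity factor $s=d^{1-\alpha}$ near the boundary $\alpha\downarrow\tfrac12$: there the available exponent $(1-\sqrt{C(\alpha)(1-\delta)})^2$ tends to $1/2$ while $s$ can be as large as $\sqrt d$, so the margin is only the $\delta$-slack coming from $\eta^2 = 2C(\alpha)(1-\delta)\log d/(nd)$. I would resolve this by being honest about constants in the moderate-deviation bound (using the sharp $\bar\Phi$ asymptotics from \eqref{eq:D1_tail}/\eqref{eq:D1} rather than a lossy exponential bound) so that the exponent is exactly $(1-\sqrt{C(\alpha)(1-\delta)})^2-o(1)$, and then verifying $1-\alpha<(1-\sqrt{C(\alpha)(1-\delta)})^2$ for every $\alpha\in(\tfrac12,1)$ and every fixed $\delta\in(0,1)$ — which holds because at $\delta=0$ the two quantities are equal only in the limit $\alpha\to\tfrac12$ (where $1-\alpha\to\tfrac12$ and $(1-\sqrt{C(\alpha)})^2\to(1-0)^2$... wait, $C(\tfrac12)=0$ so $(1-\sqrt{C})^2\to 1>\tfrac12$), so in fact there is always a strict gap and the $\delta$-perturbation is harmless. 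Once this uniform tail bound is in hand, $\mathbb{E}_{S,S'}\,\P_{\bm p^{S,S'}}(\mathcal{G}^c)=o(1)$ is immediate, completing the proof.
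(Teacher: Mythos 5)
Your proposal is correct and follows essentially the same route as the paper's proof: change of measure to reduce to $\sup_{S,S'}\P_{\bm p^{S,S'}}(\mathcal G^c)$, a union bound splitting perturbed from unperturbed coordinates, the bound \eqref{eq:G} for the latter, and a Poisson moderate-deviation bound for the former with the critical exponent $(1-\sqrt{C(\alpha)(1-\delta)})^2$ compared against $1-\alpha$. One small correction to your closing discussion: at $\delta=0$ the two exponents coincide exactly for all $\alpha\ge\tfrac34$ (since $\sqrt{C(\alpha)}=1-\sqrt{1-\alpha}$ there), not merely "in the limit $\alpha\to\tfrac12$", so the strict gap genuinely comes from the fixed $\delta>0$ in that range — which is harmless, as you conclude, and matches the paper's use of $B^2=(1-\sqrt{C_*})^2>1-\alpha$.
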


\begin{proof} To begin with, note that a simple change of measure argument gives, 
	\begin{align*}
	\mathbb{E}_{H_0}[L_{\pi_n}\mathbf{1}\{\mathcal{G}^c\}]&=\frac{1}{{d/2 \choose s/2}^2}\sum\limits_{S\in \cD, S'\in \cD'} \mathbb{P}_{\bm p^{S,S'}}(\mathcal{G}^c),
	\end{align*}
	where 
	\begin{align*}
	\mathbb{P}_{\bm p^{S,S'}}(\mathcal{G}^c):=\mathbb{P}_{\bm p^{S,S'}}\left(\max_{1 \leq j \leq d}|D_j|> \sqrt{2 \log d} \right) & \leq T_1 + T_2,
	\end{align*}
with 
\begin{align}\label{eq:T12_HC}
T_1 := \sum_{j \in (\cD \setminus S) \cup (\cD' \setminus S')}\P_{p_j}(|D_j|> \sqrt{2 \log d}) \quad \text{and} \quad T_2:= \sum_{j \in S\cup S'}\P_{p_j}(|D_j|> \sqrt{2 \log d}).
\end{align}

To begin with consider $T_1$. Note that, since for $j \in (\cD \setminus S) \cup (\cD' \setminus S')$ $Z_j \sim \dPois(n/d)$, by hence by arguments similar to \eqref{eq:G} it is immediate that  
\begin{align}\label{eq:T1_bound}
T_1  \lesssim d \P_{Z_1 \sim \dPois(n/d)}(|Z_1|> \sqrt{2 \log d}) \rightarrow  0. 
\end{align}


Next, consider $T_2$ (recall definition from \eqref{eq:T12_HC}. Note that 
\begin{align}\label{eq:T212}
T_2 =  \sum_{j \in S}\P_{p_j}(|D_j|> \sqrt{2 \log d}) + \sum_{j \in S'}\P_{p_j}(|D_j|> \sqrt{2 \log d}) := T_{12} + T_{22}. 
\end{align}
We begin with $T_{12}$. To this end, define $$D_j'=\frac{Z_j - np_j}{\sqrt{n p_j}}.$$ Then for $j \in S$, recalling \eqref{eq:ZD} gives, 
	\begin{align*}
	\P_{p_j}(|D_j|>\sqrt{2 \log d})&=\P_{p_j}\left(D_j'>  \sqrt{\frac{2 \log d}{d p_j}}-\frac{n \Delta_j}{\sqrt{n p_j}}\right)+
	\P_{p_j}\left( D_j'<-\sqrt{\frac{2 \log d}{d p_j}}-\frac{n \Delta_j}{\sqrt{n p_j}}\right) , 
	\end{align*}
where $\Delta_j = \frac{1}{d} - p_j$. Now, note that, for $j \in S$, $|d p_j -1 |= \sqrt{d\log d/n} =o(1)$, since $n \gg d \log d$, by assumption of Theorem \ref{thm:one_sample_alphageqhalf} (1). Therefore, uniformly for $j \in S$, 
	\begin{align*} 
\sqrt{\frac{2 \log d}{d p_j}} = (1+o(1)) \sqrt{2 \log d} \quad \text{and} \quad 
	\frac{n \Delta_j}{\sqrt{n p_j}} = -(1+o(1))\sqrt{2C_*\log{d}},
	\end{align*}
	where $C_*:=C(\alpha)(1-\delta)$. Hence, recalling the definition of  $T_{12}$ from \eqref{eq:T212} gives, 
	\begin{align}\label{eq:T21_tail}
	 T_{12} :=\sum_{j\in S} \left\{\P_{p_j}\left(D_j'> A\sqrt{2\log{d}}(1+o(1))\right) +  \P_{p_j}\left(D_j'< B\sqrt{2\log{d}})(1+o(1) \right) \right\}, 
	\end{align}
	where $A:=\sqrt{C_*}+1$ and $B:=\sqrt{C_*}-1$. Now, by a moderate deviation bound for the Poisson distribution (see \cite[Lemma 2]{arias2015sparse}), 
	\begin{align}\label{eq:T21_tail1}
\sum_{j\in S}	\P_{p_j}\left(D_j'> A \sqrt{2 \log{d}} (1+o(1))\right) \lesssim s e^{- A^2 \log{d} (1+o(1))} \rightarrow 0, 
	\end{align}
as $d \rightarrow \infty$. Similarly, by \cite[Lemma 2]{arias2015sparse} for the lower tail, 
	\begin{align}\label{eq:T21_tail2}
\sum_{j\in S}	\P_{p_j}\left(D_j' < B \sqrt{2 \log{d}} (1+o(1))\right) \lesssim s e^{- B^2 \log{d}(1+o(1))} \rightarrow 0, 
	\end{align}
where the last limit follows since $B:=(1-\sqrt{C}_*)>1-\alpha$. Combining, \eqref{eq:T21_tail}, \eqref{eq:T21_tail1}, and \eqref{eq:T21_tail2}, now gives $T_{21} \rightarrow 0$. A similar argument shows that $T_{22} \rightarrow 0$. Therefore, by \eqref{eq:T212}, $T_2 \rightarrow 0$. This together with \eqref{eq:T1_bound},  shows that uniformly in $S,S'$ the following hold: 
$$\P_{S,S'}(\mathcal{G}^c)\rightarrow 0.$$
This immediately implies that $\E_{H_0}[L_{\pi_n}\mathbf{1}\{\mathcal{G}^c\}] \rightarrow 0$, completing the proof of Lemma \ref{lm:LpiG}. 
\end{proof}

Next, we first consider the truncated second moment of $L_{\pi_n}$. 

\begin{lem}\label{lm:Lpi2G} For $\cG$ as defined in  \eqref{eqn:good_event_trunc_second_moment}, $\mathbb{E}_{H_0}[L_{\pi_n}^2 \mathbf{1}\{\mathcal{G}\}] \leq 1+ o(1)$. 
\end{lem}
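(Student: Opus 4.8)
The plan is to compute the second moment of the likelihood ratio restricted to the good event $\cG$ and show it is $1+o(1)$. First I would write out $L_{\pi_n}^2$ explicitly by introducing two independent copies $(S_1,S_1')$ and $(S_2,S_2')$ of the prior randomness, so that
\[
\E_{H_0}[L_{\pi_n}^2\mathbf{1}\{\cG\}] = \E_{S_1,S_1',S_2,S_2'} \E_{H_0}\Big[ \mathbf{1}\{\cG\}\prod_{j=1}^d \frac{\P_{\bm p^{S_1,S_1'}}(Z_j)\P_{\bm p^{S_2,S_2'}}(Z_j)}{\P_{H_0}(Z_j)^2}\Big].
\]
Because the perturbations are $\pm\eta$ on disjoint index sets and each coordinate is an independent Poisson, the product factorizes over $j$, and for each $j$ the local factor is $e^{-n(p_j^{(1)}-1/d)}e^{-n(p_j^{(2)}-1/d)}(d p_j^{(1)})^{Z_j}(d p_j^{(2)})^{Z_j}$. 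Taking $\E_{H_0}$ over $Z_j\sim\dPois(n/d)$ against a single perturbed factor gives $1$, and against a doubly-perturbed factor (coordinate in both configurations) gives something like $e^{ \pm n\eta^2 d}$ depending on sign agreement — exactly the mechanism in the proof of Lemma \ref{lm:threshold_lb_I}, specialized here to the one-sided perturbations on $S$ (and separately on $S'$). The key arithmetic point is that $n\eta^2 d = 2C(\alpha)(1-\delta)\log d$, so each overlapping coordinate with matching sign contributes a factor $d^{2C(\alpha)(1-\delta)}$, while mismatched signs are impossible here because within $\cD$ all perturbations are $+\eta$ and within $\cD'$ all are $-\eta$.

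Next I would reduce to controlling the overlap sizes $|S_1\cap S_2|$ and $|S_1'\cap S_2'|$. Without truncation, $\E_{H_0}[L_{\pi_n}^2]$ would be $\E[ \,d^{2C(\alpha)(1-\delta)(|S_1\cap S_2| + |S_1'\cap S_2'|)}\,]$ with each overlap hypergeometric, dominated in convex order by $\dBin(s/2, s/d)$ as in \cite[Proposition 20.6]{aldous1985exchangeability}; this blows up in the sparse regime precisely when $2C(\alpha) \geq$ the relevant exponent, which is why the truncation by $\cG$ is essential. The real work is therefore to show that on $\cG$ the contribution of large overlaps is killed. I would split the expectation according to $|S_1\cap S_2| + |S_1'\cap S_2'| = k$: the $k=0$ term is $(1+o(1))$ by Lemma \ref{lm:LpiG} (the truncation costs negligibly when there is no overlap), and for $k\geq 1$ I would bound $\E_{H_0}[L_{\pi_n}^2 \mathbf{1}\{\cG\} \mid \text{overlap}=k]$ by exploiting that on $\cG$ each $|D_j|\leq\sqrt{2\log d}$, which caps $Z_j \leq n/d + \sqrt{2n\log d/d}$, hence caps each factor $(dp_j^{(1)})^{Z_j}(dp_j^{(2)})^{Z_j} \leq (1+d\eta)^{2Z_j} \leq \exp\{2Z_j \cdot d\eta\}$ with $Z_j$ now bounded; substituting the cap on $Z_j$ replaces the divergent $d^{2C(\alpha)(1-\delta)}$ per overlap coordinate by something of order $d^{C(\alpha)(1-\delta)}\cdot(\text{sub-polynomial})$ — more precisely the truncation cuts the moderate-deviation exponent in half in the right way. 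Combined with the probability $\binom{s/2}{k}^2\binom{d/2}{s/2}^{-2}\cdots \lesssim (s^2/d)^k$ of a size-$k$ overlap, the sum over $k\geq 1$ becomes a convergent geometric-type series that is $o(1)$ provided $s^2/d \cdot d^{C(\alpha)(1-\delta)}\cdot(\text{poly-log}) \to 0$, which holds since $s = d^{1-\alpha}$ with $\alpha > 1/2$ and $C(\alpha) < 1$ for $\delta>0$.

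The main obstacle, as usual with truncated second moment arguments, is getting the truncation to bite with the correct constant: one must show that restricting to $\cG$ replaces the naive per-overlap factor $d^{2C(\alpha)(1-\delta)}$ by a factor small enough (roughly $d^{C(\alpha)(1-\delta)+o(1)}$ or better) that the series over overlap sizes converges, and this requires carefully tracking the interplay between the cap $Z_j \lesssim n/d + \sqrt{2n\log d /d}$ coming from $\cG$ and the exponential weight $(dp_j)^{2Z_j}$, in both the dense sub-regime $\frac12<\alpha<\frac34$ where $C(\alpha)=\alpha-\frac12$ and the sparse sub-regime $\alpha\geq\frac34$ where $C(\alpha)=(1-\sqrt{1-\alpha})^2$; the two cases need slightly different bookkeeping (mirroring the two cases in the proof of \eqref{eqn:hc_optimization}). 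I expect the cleanest route is to handle the $S$-part and the $S'$-part symmetrically and independently, reducing to a one-sided computation, and then to invoke the convex-ordering domination to pass from the hypergeometric overlap to a binomial, exactly paralleling the end of the proof of Lemma \ref{lm:threshold_lb_I} but with the extra indicator $\mathbf{1}\{\cG\}$ carried through the Poisson moment computation.
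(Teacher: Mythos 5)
Your overall architecture matches the paper's: expand $L_{\pi_n}^2$ over two independent replicas of the prior, factorize over coordinates, note that coordinates in the symmetric differences contribute a factor at most $1$ (change of measure plus the indicator), that each coordinate of $(S_1\cap S_2)\cup(S_1'\cap S_2')$ contributes $e^{2C_*\log d}=d^{2C_*}$ times a truncated Poisson probability (with $C_*=C(\alpha)(1-\delta)$), and finally tame the overlaps by the hypergeometric-to-binomial convex ordering. The gap is in the one step that actually produces the sharp constant. You assert that the cap $Z_j\lesssim n/d+\sqrt{2n\log d/d}$ imposed by $\mathcal{G}$ replaces the per-overlap factor $d^{2C_*}$ by ``something of order $d^{C_*}\cdot(\text{sub-polynomial})$,'' i.e.\ that truncation halves the exponent. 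This is false, and if it were true it would prove too much: the lower bound would then extend to every $C_*<2\alpha-1$, contradicting the achievability result of Lemma \ref{lm:threshold_II}, since $C(\alpha)<2\alpha-1$ throughout $\alpha\in(\tfrac34,1)$.

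The correct accounting is a moderate-deviation computation under the tilted law. After the change of measure the overlap factor is $e^{2C_*\log d}\,\P_{Z_j\sim\dPois(ndp_j^2)}(|D_j|\le\sqrt{2\log d})$, and under $\dPois(ndp_j^2)$ the statistic $D_j$ has mean $\approx 2\sqrt{2C_*\log d}$; hence the truncation event is a downward deviation of $(2\sqrt{C_*}-1)\sqrt{2\log d}$, costing $\exp\{-(1+o(1))(1-2\sqrt{C_*})^2\log d\}$ --- a discount that exists only when $4C_*\ge 1$. The net per-overlap exponent is $2C_*-(1-2\sqrt{C_*})^2$, which exceeds $C_*$ for every $C_*\in(\tfrac19,1)$, so your claimed bound is not available; the sum over overlap sizes converges because $2C_*-(1-2\sqrt{C_*})^2<2\alpha-1$ is \emph{equivalent} to $C_*<(1-\sqrt{1-\alpha})^2=C(\alpha)$, which is precisely where the constant in \eqref{eq:threshold_III} comes from. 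In the complementary case $4C_*<1$ no discount is needed or used: one bounds the truncated probability by $1$ and concludes from $2C_*<2\alpha-1$, which follows from $C_*<\min\{C(\alpha),\tfrac14\}\le\alpha-\tfrac12$. Your plan, as written, omits this tilted-mean moderate-deviation step, and the heuristic you substitute for it yields the wrong exponent, so the argument does not close as stated.
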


\begin{proof} To begin with, note that 
\begin{align}\label{eq:Lpi2_HC}
\E_{H_0}\left[L_{\pi_n}^2 \mathbf{1}\{\mathcal G\} \right] &\leq \frac{1}{{d/2 \choose s/2}^4}\sum_{S_1,S_2 \in \cD,\atop S_1',S_2' \in \cD'} \prod_{j =1}^d \E_{H_0} \left[  e^{n \Delta_j + Z_j\log(d p_j)} \mathbf{1}\left\{|D_j|\leq \sqrt{2 \log d} \right\} \right] . 
\end{align}
Now, for $j \in (S_1\Delta S_2) \cup (S_1'\Delta S_2')$ by  a simple of change of measure we have, 
\begin{align*}
\E_{H_0}\left[ e^{n \Delta_j + Z_j\log(d p_j)} \mathbf{1}\left\{|D_j|\leq \sqrt{2 \log d} \right\} \right] &=\P_{p_j}(|D_j|\leq \sqrt{2 \log d})\leq 1.
\end{align*}
Consequently, \eqref{eq:Lpi2_HC} and a direct calculation yields that
\begin{align}\label{eq:Lpi2_HC_S}
\E_{H_0} & \left[L_{\pi_n}^2 \mathbf{1}\{\mathcal G\} \right] \nonumber \\ 
&\leq \frac{1}{{d/2 \choose s/2}^4}\sum_{S_1,S_2 \in \cD,\atop S_1',S_2' \in \cD'}  \prod_{j \in (S_1\cap S_2)\cup (S_1'\cap S_2')} \E_{H_0} \left[e^{2 n \Delta_j + 2 Z_j\log(d p_j)}  \mathbf{1}\left\{|D_j|\leq \sqrt{2 \log d}\right\} \right] \nonumber \\ 
&= \frac{1}{{d/2 \choose s/2}^4}\sum_{S_1,S_2 \in \cD,\atop S_1',S_2' \in \cD'}  \prod_{j \in (S_1\cap S_2)\cup (S_1'\cap S_2')} e^{2C_*\log{d}} \P_{D_j \sim \dPois(n d p_j^2)}(|D_j|\leq \sqrt{2 \log d}), 
\end{align} 
where $C_*:=C(\alpha)(1-\delta)$. Now, we consider the two cases depending on the value of $C_*$. \\

\noindent{\it Case} 1: $4C_*<1$. This implies, $C_*<\min\{C(\alpha), \frac{1}{4} \} \leq \alpha-\frac{1}{2}$, recalling the definition of $C_*$ from \eqref{eq:threshold_III}. 
Now, bounding $\P_{Z_j \sim \dPois(n d p_j^2)} (|D_j|\leq t_d)$ by $1$ we get from \eqref{eq:Lpi2_HC_S}, 
\begin{align*} 
\E_{H_0}\left[L_{\pi_n}^2 \mathbf{1}\{\mathcal G\} \right]&\leq \E\left[e^{2C_*\log{d}|S_1\cap S_2|} \right] \E\left[ e^{2C_*\log{d}|S_1'\cap S_2'|} \right],
\end{align*}
where the above expectations are with respect to the randomness of $|S_1\cap S_2|$ and $|S_1'\cap S_2'|$. Note that $|S_1 \cap S_2|$ and  $|S_1' \cap S_2'|$ are distributed as independent $\mathrm{Hypergeometric}(d/2, s/2, s/2)$, which is dominated by the $\dBin(s/2, s/d)$ distribution in convex ordering \cite[Proposition 20.6]{aldous1985exchangeability}. Hence, 
\begin{align*} 
\E_{H_0}\left[L_{\pi_n}^2 \mathbf{1}\{\mathcal G\} \right]& \leq \exp\left\{s \log \left(1 + 
\frac{s}{d} \left(e^{2C_*\log{d}} - 1 \right) \right) \right\} \\
& \leq \exp\left\{\frac{s^2}{d}\left(e^{(1+o(1)) 2C_*\log{d}}-1\right)\right\}\\
&\leq \exp\left\{e^{(1+o(1)) (2C_*-(2\alpha-1))  \log{d} } \right\}\\
&=1+o(1),
\end{align*}
where the last step follows using $C_*<\alpha-\frac{1}{2}$. 
\\ 

\noindent{\it Case} 2: $4C_* \geq 1$. Note that this is only possible for $\alpha>\frac{3}{4}$. We now have to estimate $\P_{Z_j \sim \dPois(n d p_j^2)} (|D_j|\leq \sqrt{2 \log d})$, for $j \in (S_1\cap S_2)\cup (S_1'\cap S_2')$. To do this, first a direct calculation shows that for $j \in (S_1\cap S_2)\cup (S_1'\cap S_2')$, 
\begin{align*}
\frac{n/d-n d p_j^2}{\sqrt{n d p_j^2}}=- (1+o(1)) 2\sqrt{2C_*\log{d}} \quad \text{and} \quad  \sqrt{\frac{(2 \log d) n/d}{n dp_j^2 }}=1+o(1).
\end{align*}
Moreover, since $4C_*>1$, $\sqrt{2\log d}<2\sqrt{2C_*\log{d}}$. Hence, by \cite[Lemma 2]{arias2015sparse}, 
\begin{align*}
 \P_{Z_j \sim \dPois(n d p_j^2)} & (|D_j| \leq \sqrt{2 \log d}) \\ 
&\leq \P_{Z_j \sim \dPois(n d p_j^2)} \left(\frac{Z_j-n d p_j^2}{\sqrt{n d p_j^2}}\leq (1+o(1)) \left(\sqrt{2}-2\sqrt{2C_*} \right) \sqrt{\log{d}} \right) \\ 
&\leq \exp\left\{- (1+o(1)) \left(1-2\sqrt{C_*}\right)^2 \log{d}\right\}
\end{align*}
Therefore, from \eqref{eq:Lpi2_HC_S}, we have
\begin{align*} 
\E_{H_0}\left[L_{\pi_n}^2 \mathbf{1}\{\mathcal G\} \right]
&\leq \left(\E\left[\exp\left\{(1+o(1)) \left(2C_*-\left(1-2\sqrt{C_*}\right)^2 \right)\log{d}|S_1\cap S_2|\right\}\right] \right)^2
\end{align*}
Now, note that $2C_*-\left(1-2\sqrt{C_*}\right)^2>0$ whenever $1>C_*>\frac{1}{4}$. Hence, by the Hypergeometric-Binomial convex ordering argument \cite[Proposition 20.6]{aldous1985exchangeability} we have, 
\begin{align*}
\E_{H_0}\left[L_{\pi_n}^2 \mathbf{1}\{\mathcal G\} \right] &\leq \left(\exp\left\{ \frac{s^2}{d}\exp\left\{ (1+o(1))  \left(2C_*-\left(1-2\sqrt{C_*}\right)^2\right) \log{d}\right\}\right\}\right)^2 \\
&=\left(\exp\left[\exp\left\{(1+o(1)) \left(2C_*-\left(1-2\sqrt{C_*}\right)^2 -(2\alpha-1) \right) \log{d} \right\}\right] \right)^2\\
&=1+o(1),
\end{align*}
since $C_*< C(\alpha) = (1-\sqrt{1-\alpha})^2$, for $\alpha > \frac{3}{4}$. This completes the proof of the Lemma \ref{lm:Lpi2G}. 
\end{proof}

%

	\subsection{Lower Bound in the Impossibility Regime}
	\label{sec:lower_bound_pf}
	
In this section we consider the regime where no tests are powerful irrespective of the value of $\varepsilon$. This includes two cases: (a) when $\alpha \leq \frac{1}{2}$ and $n \ll d^{\frac{1}{2}+\alpha}$ (Theorem \ref{thm:one_sample_alphaleqhalf} (2)) and (b) $\alpha > \frac{1}{2}$ and $n \ll d \log d$ (Theorem \ref{thm:one_sample_alphageqhalf} (2)).  

	\begin{ppn}\label{ppn:threshold_lb_II} Suppose either one of the following two conditions hold: 
		\begin{enumerate}
			\item[(a)] $\alpha \leq \frac{1}{2}$ and $n \ll d^{\frac{1}{2}+\alpha}$, or 
			
			\item[(b)] $\alpha > \frac{1}{2}$ and $n \ll d \log d$.
		\end{enumerate}  
		 Then  all tests are asymptotically powerless if $\limsup \frac{d\varepsilon}{s}\leq 2$.
	\end{ppn}

	\noindent {\it Proof of Proposition} \ref{ppn:threshold_lb_II}: 
Fix $\delta \in (0, 1)$. Let $\{\eta_j: 1 \leq j \leq  d\}$ be i.i.d. $\dBer(t/s)$, where $t:= \delta s$.   
	Then consider a random probability measure $\bm p=(p_1, p_2, \ldots, p_d) \in \cP([d])$ as follows: First choose a subset $S$, with $|S|=s-\lfloor \delta s \rfloor$, of $\{1, 2, \ldots, d/2\}$ uniformly at random,  and let 
	$$p_j= 
	\left\{
	\begin{array}{cc}
	\frac{1}{d} +  \eta_j\frac{(1-\delta) (s-t) }{td} - (1-\eta_j) \frac{1-\delta}{d}   &  \text{for }  j \in S, \\ 
	\frac{1}{d}  &  \text{for } j \in \{1, 2, \ldots, d/2\} \bigcap S^c. 
	\end{array}
	\right.$$ 	
(In other words, for $j \in S$, $p_j=\frac{1}{d}(1+\frac{(1-\delta)^2}{\delta})$ with probability $\delta$ or $p_j = \frac{\delta}{d}$ with probability $1-\delta$, and $p_j=\frac{1}{d}$  for $j \in \{1, 2, \ldots, d/2\} \bigcap S^c$). This defines $\bm p$ for the first half of the first half of the domain. To define $\bm p$ for the second half of the domain, let 
$$\Delta(\bm \eta) := \frac{1-\delta}{d} \sum_{j \in S} \left(\eta_j \frac{s}{t} - 1\right).$$ 
	Note that $-\frac{(1-\delta) (s- \lfloor \delta s \rfloor)}{d} \leq \Delta(\bm \eta)  \leq \frac{(1-\delta) s}{d} \left( \frac{s}{t} - 1 \right)$.  Next, fix a sequence $1 \ll \gamma_d \ll \frac{\sqrt s}{\log d}$ and define the event 
	$$\cG=\left\{\bm \eta: |\Delta(\bm \eta)| \leq \frac{\gamma_d s}{d \sqrt t} \right\}.$$ 
	Then consider the following cases: 
	\begin{itemize}
		
		\item If $\bm \eta \in \cG$, then choose a subset $T$, with $|T| = \lfloor \delta s \rfloor$ uniformly at random from $\{d/2+1, \ldots, d\}$, and define 
		$$p_j= 
		\left\{
		\begin{array}{cc}
		\frac{1}{d} + \frac{|\Delta(\bm \eta)|}{\lfloor \delta s \rfloor}   &  \text{for }  j \in T \text{ and } \Delta(\bm \eta) < 0 , \\ 
		\frac{1}{d} - \frac{|\Delta(\bm \eta)|}{\lfloor \delta s \rfloor}  &  \text{for } j \in T \text{ and } \Delta(\bm \eta) > 0, \\  
		\frac{1}{d}  &  \text{for } j \in \{d/2+1, \ldots, d\} \bigcap T^c. 
		\end{array}
		\right.$$
		
		\item If $\bm \eta \in \cG^c$, then choose $r_n \ll \min\{ \frac{1}{d}, \frac{1}{\sqrt{nd}}\}$  and define 
		$$p_j= 
		\left\{
		\begin{array}{cc}
		\frac{1}{d} + r_n &  \text{for }  j=d/2+1, \\ 
		\frac{1}{d} -  r_n &  \text{for } j = d/2+2, \\  
		\frac{1}{d}     &  \text{otherwise}. 
		\end{array}
		\right.$$
	\end{itemize}
{Note that by construction $\sum_{j=1}^d p_j=1$. The following lemma shows that $\bm p$ belongs to $\cP(U([d]), s, \varepsilon)$ with  high probability, for any  $\varepsilon$ such that 
 $\limsup \frac{d\varepsilon}{s} \leq 2$.}
	
	\begin{lem} For any $0 < \delta < 1$, 
		$$\lim_{n \rightarrow \infty}\P\left(\bm p  \in \cP\left(U[d], s, \frac{2(1-\delta)^3 s}{d} \right) \right) = 1.$$ 
	\end{lem}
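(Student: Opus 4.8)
The plan is to verify, each with probability tending to $1$, the three conditions defining membership of $\bm p$ in $\cP(U([d]), s, \tfrac{2(1-\delta)^3 s}{d})$: that $\bm p$ is a probability vector, that $\|\bm p - U([d])\|_0 = s$, and that $\|\bm p - U([d])\|_1 \ge \tfrac{2(1-\delta)^3 s}{d}$. That $\sum_j p_j = 1$ holds identically is already recorded. Nonnegativity (and $p_j \le 1$) is deterministic on the coordinates in $S$, where $p_j \in \{\tfrac1d + \tfrac{(1-\delta)^2}{\delta d},\ \tfrac{\delta}{d}\}$, both strictly positive and, for large $d$, at most $1$; on every other coordinate $p_j$ is either $\tfrac1d$ or differs from it by at most $\tfrac{|\Delta(\bm\eta)|}{\lfloor\delta s\rfloor}$ on $\cG$ (respectively by $r_n$ on $\cG^c$), and I would check both perturbations are $o(1/d)$ — for $r_n$ from $r_n \ll 1/d$, and on $\cG$ from $|\Delta(\bm\eta)| \le \tfrac{\gamma_d s}{d\sqrt t}$ together with $\gamma_d \ll \sqrt s/\log d$. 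So $\bm p \in \cP([d])$ for all large $d$, and it remains to handle the support size and the $L_1$ bound, both of which I would establish deterministically on the event $\cG$.

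First, $\P(\cG^c) = o(1)$: conditionally on $S$, $\Delta(\bm\eta) = \tfrac{1-\delta}{d}\sum_{j\in S}(\eta_j\tfrac st - 1)$ is a centered sum of $m := |S| = s - \lfloor\delta s\rfloor$ i.i.d.\ terms with $\Var(\Delta(\bm\eta)) \asymp s/d^2$, so Chebyshev against the threshold $\tfrac{\gamma_d s}{d\sqrt t}$ gives $\P(\cG^c) \lesssim \gamma_d^{-2}\to 0$. On $\cG$ both $\|\bm p - U([d])\|_0$ and $\|\bm p - U([d])\|_1$ depend on $\bm\eta$ only through $W := \sum_{j\in S}\eta_j$ and $\Delta(\bm\eta)$ — not on which random subsets $S,T$ were drawn — so it suffices to treat them as deterministic functions of $W$. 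For the support, each of the $m$ coordinates in $S$ has $p_j\ne\tfrac1d$, and, provided $\Delta(\bm\eta)\ne 0$, each of the $\lfloor\delta s\rfloor$ coordinates in $T$ has $p_j\ne\tfrac1d$, so $\|\bm p - U([d])\|_0 = m + \lfloor\delta s\rfloor = s$; moreover $\P(\Delta(\bm\eta)=0) = \P(W = \delta m)$ is $0$ unless $\delta m\in\mathbb Z$ and is $O(1/\sqrt s) = o(1)$ otherwise.

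The crux is the $L_1$ bound: estimating $\sum_{j\in S}|p_j-\tfrac1d|$ by its mean $\tfrac{2(1-\delta)^2 m}{d}$ is not enough on its own, because the fluctuations of $W$ are of order $\sqrt s/d$, comparable to the gap down to the target. The idea is to add back the compensating mass placed on $T$: with $W^- := m - W$ one has $\sum_{j\in S}(p_j-\tfrac1d) = \Delta(\bm\eta) = \tfrac{(1-\delta)^2}{\delta d}W - \tfrac{1-\delta}{d}W^-$, $\sum_{j\in S}|p_j-\tfrac1d| = \tfrac{(1-\delta)^2}{\delta d}W + \tfrac{1-\delta}{d}W^-$, and $\sum_{j\in T}|p_j-\tfrac1d| = |\Delta(\bm\eta)|$, so on $\cG$ the cross terms cancel and
\[
\|\bm p - U([d])\|_1 =
\begin{cases}
\dfrac{2(1-\delta)^2}{\delta d}\,W, & \Delta(\bm\eta)\ge 0,\\
\dfrac{2(1-\delta)}{d}\,W^-, & \Delta(\bm\eta) < 0.
\end{cases}
\]
Since $\Delta(\bm\eta)\ge 0 \iff W\ge\delta m$ and $\Delta(\bm\eta)<0 \iff W^- > (1-\delta)m$, in either case $\|\bm p - U([d])\|_1 \ge \tfrac{2(1-\delta)^2 m}{d} \ge \tfrac{2(1-\delta)^3 s}{d}$, using $m = s - \lfloor\delta s\rfloor \ge (1-\delta)s$; the borderline $\Delta(\bm\eta)=0$ gives the same bound. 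Together with $\P(\cG)\to 1$ and $\P(\Delta(\bm\eta)=0)\to 0$ this yields the claim. I expect this last step to be the main obstacle — the key point is to recognize that one should keep $\sum_{j\in S}|p_j-\tfrac1d|$ coupled with the balancing term $|\Delta(\bm\eta)|$, rather than bound it through its expectation (which loses the target by $o(1)$ fluctuations), after which the inequality becomes deterministic on $\cG$; the variance estimate for $\Delta(\bm\eta)$, the $o(1/d)$ perturbation bounds, and the bookkeeping around $\lfloor\delta s\rfloor$ and $\{\Delta(\bm\eta)=0\}$ are routine.
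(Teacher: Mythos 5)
Your proof is correct, and at the decisive step it takes a genuinely different route from the paper's --- one that is in fact tighter. The paper lower-bounds $\|\bm p - U([d])\|_1$ by Chebyshev around its mean: it notes $\Var[\|\bm p - U([d])\|_1]=O_\delta(s/d^2)$ and concludes that with probability tending to one $\|\bm p - U([d])\|_1 \geq \E\|\bm p - U([d])\|_1 = \tfrac{2(1-\delta)^2}{d}(s-\lfloor\delta s\rfloor) \geq \tfrac{2(1-\delta)^3 s}{d}$. As you correctly diagnose, concentration around the mean is too coarse here: the cushion between the mean and the target is only $O(1/d)$ (it comes from the floor function), while the standard deviation is of order $\sqrt{s}/d$, so a Chebyshev bound at the mean cannot deliver the stated constant with probability tending to one (it would only do so after weakening $(1-\delta)^3$ to, say, $(1-\delta)^4$, which is harmless downstream since $\delta$ is arbitrary, but is not what the lemma asserts). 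Your identity $\|\bm p - U([d])\|_1 = \sum_{j\in S}|p_j-\tfrac1d| + |\Delta(\bm\eta)|$, which collapses to $\tfrac{2(1-\delta)^2}{\delta d}W$ when $\Delta(\bm\eta)\geq 0$ (equivalently $W\geq\delta m$) and to $\tfrac{2(1-\delta)}{d}W^-$ when $\Delta(\bm\eta)<0$ (equivalently $W^->(1-\delta)m$), shows the bound $\tfrac{2(1-\delta)^2 m}{d}\geq\tfrac{2(1-\delta)^3 s}{d}$ holds \emph{deterministically} on $\cG$, which is exactly the right way to see it: the fluctuations of the $L_1$ norm are one-sided above this floor. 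You also handle two points the paper passes over in silence --- the event $\{\Delta(\bm\eta)=0\}$, on which the support size drops to $s-\lfloor\delta s\rfloor$ rather than $s$ (disposed of by a local-CLT bound of order $s^{-1/2}$), and nonnegativity of the coordinates on $T$. The remaining ingredient, $\P(\cG^c)\lesssim\gamma_d^{-2}=o(1)$, matches the paper's argument.
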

	
	\begin{proof}  To begin with note that $\E[\Delta(\bm \eta)]=0$ and $\Var[\Delta(\bm \eta)]= O_\delta(\frac{s^2}{t d^2})$. Then 
		\begin{align}\label{eq:barG}
		\P(\cG^c) \lesssim_{\delta} \frac{d^2 t \Var[\Delta(\bm \eta)]}{ \gamma_d^2 s^2 } \lesssim_{\delta} \frac{1}{ \gamma_d^2 } \ll 1, 
		\end{align}
		since $\gamma_d  \gg 1$. 
		
		Next, note that $\Var\left[||\bm p - U([d])||_1 \right]=O_{\delta}(\frac{s}{d^2}) \ll 1$. This implies, for any $b >0$, 
		\begin{align*}
		\P\left( ||\bm p  - U([d])||_1 \leq \E\left[||\bm p  - U([d])||_1 \right] - b \right) \leq \frac{\Var\left[||\bm p  - U([d])||_1 \right]}{b^2} \ll 1. 
		\end{align*}
		Therefore, since $b >0$ is arbitrary, with probability going to 1,  
		\begin{align*}
		||\bm p  - U([d])||_1 \geq \E[||\bm p  - U([d])||_1]  & = \frac{2(1-\delta)}{d} (s- \lfloor \delta s \rfloor )  \left(1 - \frac{t}{s} \right)  \nonumber \\ 
		& \geq \frac{2(1-\delta)^3s}{d} . 
		\end{align*}
		To complete the proof recall \eqref{eq:barG} and note that if $\bm p  \in \cG$, then $||\bm p  - U([d])||_0=s$.  
	\end{proof}

	
The lemma above shows that the prior $\pi_n$ induced on $\cP([d])$ by the random probability distribution $\bm p$ constructed above is, in fact, supported on  $\cP(U([d]), s, \varepsilon)$ with high probability, for any  $\varepsilon$ such that 
 $\limsup \frac{d\varepsilon}{s} \leq 2$. Hence, by \cite[Chapter 2]{ingster2012nonparametric} it suffices to analyze the second moment of the $\pi_n$-integrated likelihood ratio. Towards this define, 
	\begin{align}
	L(\bm p)  := \frac{\P_{\bm p}(Z_1, Z_2, \ldots, Z_d)}{\P_{H_0}(Z_1, Z_2, \ldots, Z_d)} 	&=L_1(\bm p ) L_{2, 1}(\bm p )  \bm 1\{\bm \eta \in \cG\} + L_1(\bm p ) L_{2, 2}(\bm p )  \bm 1\{\bm \eta \in \cG^c\} , \nonumber 
	\end{align} 
	where 
	\begin{align}
	L_1(\bm p ) & :=  \prod_{j \in S} \left[e^{\frac{n (1-\delta)}{d} (1-\eta_j \frac{s}{t})} \left( \delta + \eta_j \frac{(1-\delta)s}{t}\right)^{Z_{j}}  \right] , \nonumber \\ 
	L_{2, 1}(\bm p ) & :=  \prod_{j \in T}  e^{\frac{ n |\Delta(\bm \eta)|}{ \lfloor \delta s \rfloor}}\left(1 + \frac{ d |\Delta(\bm \eta)|}{n \lfloor \delta s \rfloor} \right)^{Z_j}  \bm 1\{ \Delta(\bm \eta) < 0\} \nonumber \\ 
	& \;\;\;\;\;\;\;\;\;\;\;\;\;\;\;\;\;\;\;\;\;\;\;\;   +  \prod_{j \in T} e^{\frac{ n |\Delta(\bm \eta)|}{ \lfloor \delta s \rfloor}}\left(1 - \frac{ d |\Delta(\bm \eta)|}{n \lfloor \delta s \rfloor} \right)^{Z_j} \bm 1\{ \Delta(\bm \eta) > 0\} , \nonumber \\ 
	L_{2, 2}(\bm p ) & :=  \left(1 +  d r_n \right)^{Z_{d/2+1}}    \left(1 -  d r_n \right)^{Z_{d/2+2}} . \nonumber 
	\end{align} 
	Then the $\pi_n$-integrated likelihood becomes 
	\begin{align*} 
	L_{\pi_n} := \E_{\bm p } [L(\bm p )] = L_{\pi_n, 1} + L_{\pi_n, 2}, 
	\end{align*}
	where 
	\begin{align}\label{eq:L_H_II}  
	L_{\pi_n, 1} := \E_{\bm p } \left[ L_1(\bm p ) L_{2, 1}(\bm p )  \bm 1\{\bm \eta \in \cG\}  \right] \quad \text{and} \quad L_{\pi_n, 2} := \E_{\bm p } \left[ L_1(\bm p ) L_{2, 2}(\bm p )  \bm 1\{\bm \eta \in \cG^c\} \right] . 
	\end{align}
	The following result bounds the second moments of $L_{\pi_n, 1}$  and $L_{\pi_n, 2}$:

	\begin{lem}\label{lm:L_H12} For $L_{\pi_n, 1}$ and $L_{\pi_n, 2}$ as defined above in \eqref{eq:L_H_II} and under the conditions of Proposition \ref{ppn:threshold_lb_II}, the following hold:  
		\begin{enumerate} 
			
			\item[(a)] $\E_{H_0} \left[L_{\pi_n, 1}^2 \right] \leq 1+ o(1)$. 
			
			\item[(b)] $\E_{H_0} \left[L_{\pi_n, 2}^2 \right]= o(1)$. 
			
		\end{enumerate}
	\end{lem}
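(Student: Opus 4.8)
The strategy is to bound the two truncated likelihood pieces separately, using throughout that under $H_0$ the first-half coordinates $\{1,\dots,d/2\}$ and the second-half coordinates $\{d/2+1,\dots,d\}$ are independent. Part~(b) is the short one: on the event $\cG^c$ the prior leaves every first-half coordinate equal to $1/d$, so there $L_1(\bm p)\equiv 1$ and $L_{2,2}(\bm p)=(1+dr_n)^{Z_{d/2+1}}(1-dr_n)^{Z_{d/2+2}}$ is a fixed function of the data, while $\cG^c$ depends only on $\bm\eta$, independently of the data. Hence $L_{\pi_n,2}=\P(\cG^c)\,L_{2,2}$ and $\E_{H_0}[L_{\pi_n,2}^2]=\P(\cG^c)^2\,\E_{H_0}[L_{2,2}^2]$. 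A one-line Poisson moment generating function computation gives $\E_{H_0}[L_{2,2}^2]=\exp(2ndr_n^2)=1+o(1)$ since $r_n\ll(nd)^{-1/2}$, while $\P(\cG^c)\lesssim\gamma_d^{-2}\to0$ by the Chebyshev bound \eqref{eq:barG} already established; multiplying gives $\E_{H_0}[L_{\pi_n,2}^2]=o(1)$.

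For part~(a) I would write $\E_{H_0}[L_{\pi_n,1}^2]$ as an expectation over two independent copies $(\bm\eta,S,T)$ and $(\bm\eta',S',T')$ of the $\cG$-restricted prior and use half-independence under $H_0$ to factor the inner expectation as $\E_{H_0}[L_1L_1']\cdot\E_{H_0}[L_{2,1}L_{2,1}']$. The second-half factor is dealt with directly: a Poisson computation yields $\E_{H_0}[L_{2,1}L_{2,1}']\le\exp(\theta\,|T\cap T'|)$ with $\theta:=nd\,|\Delta(\bm\eta)|\,|\Delta(\bm\eta')|/\lfloor\delta s\rfloor^2$, and on $\cG\cap\cG'$ one has $|\Delta(\bm\eta)|,|\Delta(\bm\eta')|\le\gamma_d s/(d\sqrt t)$, so $\theta\lesssim_\delta n\gamma_d^2/(ds)\to0$; averaging over $T,T'$ and dominating $|T\cap T'|$ in convex order by a binomial \cite[Proposition~20.6]{aldous1985exchangeability} gives $\E_{T,T'}[\exp(\theta|T\cap T'|)]\le\exp(O_\delta(ns\gamma_d^2/d^2))=1+o(1)$, provided $\gamma_d$ grows slowly enough (for instance $\gamma_d=\log d$, which is compatible with $1\ll\gamma_d\ll\sqrt s/\log d$). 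Since $\E_{H_0}[L_1L_1']\ge0$ and $\bm 1\{\cG\cap\cG'\}\le1$, it then suffices to bound $\E_{(\bm\eta,S),(\bm\eta',S')}[\E_{H_0}[L_1L_1']]$.

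For this last step I would compute $\E_{H_0}[L_1L_1']$ in closed form. Because $L_1$ is a likelihood ratio of mean $1$, every coordinate in $S\triangle S'$ contributes exactly $1$, so $\E_{H_0}[L_1L_1']=\prod_{j\in S\cap S'}e^{c(\eta_j,\eta_j')}$, where a short calculation shows $c(\eta_j,\eta_j')$ equals $\tfrac{n}{d}(1-\delta)^2$ if $(\eta_j,\eta_j')=(0,0)$, equals $-\tfrac{n}{d}(1-\delta)^3/\delta$ if the pair is mixed, and equals $\tfrac{n}{d}(1-\delta)^4/\delta^2$ if $(\eta_j,\eta_j')=(1,1)$. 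With $\eta_j,\eta_j'$ independent $\dBer(\delta)$ and $\lambda:=n/d$, one then gets $\E[e^c]-1=O_\delta(\lambda^2)$ when $\lambda=o(1)$ (the dense regime of case~(a)) and, more crudely, $\E[e^c]-1\le d^{o(1)}$ when $\lambda=o(\log d)$ (case~(b)). Dominating $|S\cap S'|$ by a binomial in convex order once more gives $\E_{(\bm\eta,S),(\bm\eta',S')}[\E_{H_0}[L_1L_1']]\le\exp\!\big(\tfrac{2|S|^2}{d}(\E[e^c]-1)\big)$, whose exponent is $O_\delta(n^2s^2/d^3)=O_\delta(n^2/d^{1+2\alpha})\to0$ in case~(a) (precisely because $n\ll d^{1/2+\alpha}$) and $O_\delta(s^2 d^{o(1)}/d)=O_\delta(d^{1-2\alpha+o(1)})\to0$ in case~(b) (precisely because $\alpha>\tfrac12$). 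Combining the two factors gives $\E_{H_0}[L_{\pi_n,1}^2]\le(1+o(1))(1+o_\delta(1))=1+o_\delta(1)$, which is the claimed estimate.

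I expect the main obstacle to be the bookkeeping at the two places where the halves are coupled through $\Delta(\bm\eta)$ — the sign-dependent form of $L_{2,1}$, and the fact that $\cG$ constrains $\bm\eta$ while the absorption uses a fresh random set $T$ — which is exactly what conditioning on $(\bm\eta,S,\bm\eta',S')$ dissolves: after that conditioning $H_0$-independence factors the inner expectation and $\bm 1\{\cG\cap\cG'\}$ becomes a deterministic cutoff, leaving two routine convex-ordering estimates. The remaining point requiring care is choosing $\gamma_d$ simultaneously large enough for $\P(\cG^c)\to0$ (part~(b)) and small enough that the $T$-absorption factor does not inflate the second moment (part~(a)).
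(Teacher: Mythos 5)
Your part~(a) is essentially the paper's own argument: condition on $(S_1,\bm\eta_1,S_2,\bm\eta_2)$, use independence of the two halves of the domain under $H_0$ to factor the inner expectation, show the $T$-averaged second-half factor is $1+o(1)$ uniformly on $\cG$ via the hypergeometric--binomial convex ordering (your bound $\exp\{O_\delta(ns\gamma_d^2/d^2)\}$ is exactly the paper's), and then bound $\E\,\E_{H_0}[L_1L_1']$. Your per-coordinate computation of $\E[e^{c(\eta_j,\eta_j')}]$ and the subsequent convex-ordering step on $|S_1\cap S_2|$ is a slightly cleaner packaging of the paper's split into the $(1,1)$, $(0,0)$ and mixed counts, and the resulting exponents ($O_\delta(n^2s^2/d^3)$ in case (a), $O_\delta(s^2d^{o(1)}/d)$ in case (b)) agree with the paper's.

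Part~(b), however, rests on a misreading of the prior. The first-half perturbation --- the set $S$ and the values $p_j$ for $j\in S$ --- is assigned \emph{before} the event $\cG$ is examined; $\cG$ versus $\cG^c$ only determines how the second half of the domain absorbs the deficit $\Delta(\bm\eta)$. So on $\cG^c$ the first half is still perturbed, $L_1(\bm p)\not\equiv 1$ there, and $L_{\pi_n,2}=\E_{(S,\bm\eta)}\left[L_1(S,\bm\eta)\,L_{2,2}\,\bm 1\{\bm\eta\in\cG^c\}\right]$ (see \eqref{eq:L_H_II}) is not $\P(\cG^c)\,L_{2,2}$. Consequently $\E_{H_0}[L_{\pi_n,2}^2]$ contains the cross term $\E\big[\prod_{j\in S_1\cap S_2}e^{c(\eta_j,\eta_j')}\,\bm 1\{\bm\eta_1\in\cG^c\}\bm 1\{\bm\eta_2\in\cG^c\}\big]$, which your argument never addresses; note also that this term cannot simply be bounded by $\P(\cG^c)^2$ because the exponential factor is unbounded. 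The repair is short: the paper applies Cauchy--Schwarz to separate the product from the indicators, obtaining $\big(\E[e^{2\sum_{j\in S_1\cap S_2}c(\eta_j,\eta_j')}]\big)^{1/2}\,\P(\cG^c)$, where the first factor is $1+o(1)$ by the same computation as in your part~(a) with the constant doubled, and the second factor is $o(1)$ by \eqref{eq:barG}. As written, though, your proof of (b) does not go through.
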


The above lemma implies $\E_{H_0} \left[L_{\pi_n}^2 \right] \leq 1+ o(1)$, under the conditions of Proposition \ref{ppn:threshold_lb_II} . Therefore, by \eqref{eq:R_lb}, for any test function $T$, $\overline \cR_{n, d}(T, s, \varepsilon) \rightarrow 1$, under the conditions of Proposition \ref{ppn:threshold_lb_II} This completes the proof of Proposition \ref{ppn:threshold_lb_II}. \\

	\noindent{\it Proof of Lemma} \ref{lm:L_H12} (a): Recalling \eqref{eq:L_H_II} note that,  
	\begin{align}
	L_{\pi_n, 1} & = \E_{\bm p } \left[ L_1(\bm p ) L_{2, 1}(\bm p )  \bm 1\{\bm \eta \in \cG\}  \right] \nonumber \\ 
	& = \E_{(S, T, \bm \eta)} \left[ L_1(S, \bm \eta)  \left\{L_{2, 1}^+(T, \bm \eta) + L_{2, 1}^-(T, \bm \eta) \right\} \bm 1\{\bm \eta \in \cG\} \right], \nonumber 
	\end{align}
	where the expectation is over the randomness of $\{\eta_j: 1 \leq j \leq s-\lfloor \delta s \rfloor\}$, the set $S$, and the set $T$, and 
	\begin{align}
	L_1(S, \bm \eta) & : = \prod_{j \in S} e^{\frac{n (1-\delta)}{d} (1-\eta_j \frac{s}{t})} \left( \delta + \eta_j \frac{(1-\delta)s}{t}\right)^{Z_{j}} \label{eq:L1_T_I} \\ 
	L_{2, 1}^+(T, \bm \eta)  & :=  \prod_{j \in T}  e^{-\frac{ n |\Delta(\bm \eta)|}{ \lfloor \delta s \rfloor}}\left(1 + \frac{ d |\Delta(\bm \eta)|}{\lfloor \delta s \rfloor} \right)^{Z_j} \bm 1\{ \Delta(\bm \eta) < 0\} \label{eq:L21_T_II} \\
	L_{2, 1}^-(T, \bm \eta) & := \prod_{j \in T}  e^{\frac{ n |\Delta(\bm \eta)|}{ \lfloor \delta s \rfloor}}\left(1 - \frac{ d |\Delta(\bm \eta)|}{\lfloor \delta s \rfloor} \right)^{Z_j} \bm 1\{ \Delta(\bm \eta) > 0\} . \label{eq:L21_T_III}
	\end{align}
	Therefore, 
	\begin{align}\label{eq:L1_ST}
	\E_{H_0} L_{\pi_n, 1}^2 & =  \E_{\substack{(S_1, T_1, \bm \eta_1) \\ (S_2, T_2, \bm \eta_2) }} \E_{H_0} \left[L_1(S_1, \bm \eta_1) L_1(S_2, \bm \eta_2)   \left\{ W_1 + W_2 + W_3 + W_4 \right\} \right]
	\end{align}
	where 
	\begin{align}\label{eq:LT12}
	W_1&=L_{2, 1}^+(T_1, \bm \eta_1) L_{2, 1}^+(T_2, \bm \eta_2) \bm 1\{\bm \eta_1, \bm \eta_2  \in \cG\}, \nonumber \\ 
	W_2& =L_{2, 1}^+(T_1, \bm \eta_1) L_{2, 1}^-(T_2, \bm \eta_2) \bm 1\{\bm \eta_1, \bm \eta_2  \in \cG\},  \nonumber \\ 
	W_3&=L_{2, 1}^-(T_1, \bm \eta_1) L_{2, 1}^+(T_2, \bm \eta_2) \bm 1\{\bm \eta_1, \bm \eta_2  \in \cG\},  \nonumber \\  
	W_4&=L_{2, 1}^-(T_1, \bm \eta_1) L_{2, 1}^-(T_2, \bm \eta_2) \bm 1\{\bm \eta_1, \bm \eta_2  \in \cG\}. 
	\end{align} 
	
	\begin{lem}\label{lm:L1_I} Let $W_1, W_2, W_3, W_4$ be as defined in \eqref{eq:LT12}. Then, for $1 \leq i \leq 4$,   
		\begin{align}\label{eq:L21W_I}
		\E_{T_1, T_2}[W_i|S_1, S_2, \bm \eta_1, \bm \eta_2] \leq (1+o(1)) \chi_i, 
		\end{align} 
		where $\chi_1:= \bm 1\{\bm \eta_1, \bm \eta_2  \in \cG, \Delta(\bm \eta_1) <0, \Delta(\bm \eta_2) <0  \}$, $\chi_2:= \bm 1\{\bm \eta_1, \bm \eta_2  \in \cG, \Delta(\bm \eta_1) <0, \Delta(\bm \eta_2)> 0  \}$,  $\chi_3:= \bm 1\{\bm \eta_1, \bm \eta_2  \in \cG, \Delta(\bm \eta_1) > 0, \Delta(\bm \eta_2) <0  \}$, and $\chi_4:= \bm 1\{\bm \eta_1, \bm \eta_2  \in \cG, \Delta(\bm \eta_1) > 0, \Delta(\bm \eta_2) > 0  \}$.  As a consequence, 
		\begin{align}\label{eq:L21W_II}
		\E_{T_1, T_2}[W_1+W_2+W_3+W_4|S_1, S_2, \bm \eta_1, \bm \eta_2] \leq 1 + o(1). 
		\end{align}
	\end{lem}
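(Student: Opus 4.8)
The plan is to freeze the random sets $S_1,S_2,T_1,T_2$ and the coin vectors $\bm\eta_1,\bm\eta_2$ and then integrate out the Poisson counts $(Z_j)_{j=1}^d$ under $H_0$. Since under $H_0$ these counts are independent with $Z_j\sim\dPois(n/d)$, and each $W_i$ depends only on $\{Z_j:j\in T_1\cup T_2\}\subseteq\{Z_j:j>d/2\}$, this conditional $H_0$-expectation factorizes over the coordinates in $T_1\cup T_2$; this is also the structural reason why the right-hand side of \eqref{eq:L1_ST} decouples into a first-half factor times the $W_i$'s. Writing $a_i:=d|\Delta(\bm\eta_i)|/\lfloor\delta s\rfloor$, so that the perturbed coordinates $j\in T_i$ satisfy $dp_j=1\pm a_i$ and $n(p_j-1/d)=\pm na_i/d$, the only computation needed is that for $Z\sim\dPois(n/d)$ and signs $\sigma_1,\sigma_2\in\{\pm1\}$ one has $\E_{H_0}[(1+\sigma_i a_i)^Z]=e^{\sigma_i na_i/d}$ and $\E_{H_0}[(1+\sigma_1 a_1)^Z(1+\sigma_2 a_2)^Z]=e^{(n/d)(\sigma_1 a_1+\sigma_2 a_2+\sigma_1\sigma_2 a_1a_2)}$.

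Feeding this in coordinate by coordinate, I expect every $j\in T_1\triangle T_2$ to contribute a factor of exactly $1$ (a single perturbed likelihood ratio has $H_0$-mean one), and every $j\in T_1\cap T_2$ to contribute $e^{\pm na_1a_2/d}$, the sign being $+$ when the two perturbations carry the same sign (cases $W_1,W_4$) and $-$ when they carry opposite signs (cases $W_2,W_3$). Thus, with $\chi_i$ as in the statement,
\begin{align*}
\E_{H_0}\big[W_i\mid S_1,S_2,\bm\eta_1,\bm\eta_2,T_1,T_2\big]=\exp\Big(\pm\tfrac{na_1a_2}{d}\,|T_1\cap T_2|\Big)\,\chi_i,
\end{align*}
with the $+$ sign for $i\in\{1,4\}$ and the $-$ sign for $i\in\{2,3\}$. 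For $i\in\{2,3\}$ the exponent is nonpositive, so $\E_{H_0}[W_i\mid\cdots]\le\chi_i$, and averaging over $T_1,T_2$ gives \eqref{eq:L21W_I} with constant $1$.

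For $i\in\{1,4\}$ the remaining (and most delicate) step is to show $\E_{T_1,T_2}\big[e^{(na_1a_2/d)|T_1\cap T_2|}\big]=1+o(1)$ uniformly on $\{\bm\eta_1,\bm\eta_2\in\cG\}$. On $\cG$ one has $|\Delta(\bm\eta_i)|\le\gamma_d s/(d\sqrt t)$ with $t=\delta s$, hence $a_i\lesssim_\delta\gamma_d/\sqrt s$ and $c:=na_1a_2/d\to0$; and since $T_1,T_2$ are independent uniformly random $\lfloor\delta s\rfloor$-subsets of the $(d/2)$-element set $\{d/2+1,\dots,d\}$, the overlap $|T_1\cap T_2|$ is $\mathrm{Hypergeometric}(d/2,\lfloor\delta s\rfloor,\lfloor\delta s\rfloor)$, dominated in convex order by $\dBin(\lfloor\delta s\rfloor,2\lfloor\delta s\rfloor/d)$ (the Hypergeometric–Binomial comparison \cite[Proposition 20.6]{aldous1985exchangeability} already used in the paper). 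This yields
\begin{align*}
\E_{T_1,T_2}\big[e^{c|T_1\cap T_2|}\big]\le\Big(1+\tfrac{2\lfloor\delta s\rfloor}{d}(e^c-1)\Big)^{\lfloor\delta s\rfloor}\le\exp\Big(\tfrac{2\lfloor\delta s\rfloor^2}{d}(e^c-1)\Big),
\end{align*}
and on $\cG$ the exponent here is $O_\delta(n\gamma_d^2 s/d^2)$, which tends to $0$ provided the truncation level $\gamma_d$ grows slowly enough — any $\gamma_d\to\infty$ with $\gamma_d=o(d/\sqrt{ns})$ works, and such sequences exist because $d/\sqrt{ns}\to\infty$ both when $n\ll d^{1/2+\alpha}$ and when $n\ll d\log d$. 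I expect this verification — that the radius defining $\cG$ can be taken compatibly with the construction of the prior and with the sample-size budget in each regime — to be the main obstacle; it is exactly where the impossibility hypotheses on $n$ in Proposition \ref{ppn:threshold_lb_II} enter.

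Finally, for \eqref{eq:L21W_II} I would observe that $\chi_1,\chi_2,\chi_3,\chi_4$ are indicators of the four disjoint events carved out by the pair of signs $(\operatorname{sgn}\Delta(\bm\eta_1),\operatorname{sgn}\Delta(\bm\eta_2))$, so $\chi_1+\chi_2+\chi_3+\chi_4\le1$; summing the four bounds from \eqref{eq:L21W_I} then gives $\E_{T_1,T_2}[W_1+W_2+W_3+W_4\mid S_1,S_2,\bm\eta_1,\bm\eta_2]\le1+o(1)$, as claimed.
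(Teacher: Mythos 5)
Your proposal is correct and follows essentially the same route as the paper: condition on $(S_1,S_2,T_1,T_2,\bm\eta_1,\bm\eta_2)$, factorize the $H_0$-expectation over coordinates using the Poisson moment generating function so that $T_1\triangle T_2$ contributes $1$ and $T_1\cap T_2$ contributes $e^{\pm nd|\Delta(\bm\eta_1)||\Delta(\bm\eta_2)|\,|T_1\cap T_2|/\lfloor\delta s\rfloor^2}$, dispose of the opposite-sign cases by nonpositivity of the exponent, and control the same-sign cases via the Hypergeometric--Binomial convex-ordering bound together with the bound $|\Delta(\bm\eta_i)|\le\gamma_d s/(d\sqrt t)$ on $\cG$, exactly as in the paper's treatment of $W_1$. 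Your closing observation that $\chi_1+\chi_2+\chi_3+\chi_4\le 1$ is also the paper's final step, and your verification that the exponent $O_\delta(ns\gamma_d^2/d^2)$ vanishes under the sample-size hypotheses of Proposition \ref{ppn:threshold_lb_II} matches the paper's case analysis.
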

	
	\begin{proof} To begin with consider $W_1$. Denote by $\cF$ the sigma algebra generated by $(S_1, S_2, \bm \eta_1, \bm \eta_2)$. Recalling \eqref{eq:L21_T_II} and \eqref{eq:L21_T_III} and the moment generating function of the $\dPois(n/d)$ distribution now gives, 
		\begin{align}\label{eq:LT1T2}
		& \E_{T_1, T_2} [W_1|\cF] \nonumber \\ 
		& = \E_{T_1, T_2} \left[\E_{H_0} \left\{L_{2, 1}^+(T_1, \bm \eta_1) L_{2, 1}^+(T_2, \bm \eta_2) \right\}  \Big| \cF \right] \chi_1 \nonumber \\ 
		& = \E_{T_1, T_2} \left[\E_{H_0} \left\{ \prod_{j \in T_1 \cap T_2} e^{-\frac{ n (|\Delta(\bm \eta_1)| + |\Delta(\bm \eta_2)|)}{ \lfloor \delta s \rfloor}}\left\{\left(1 + \frac{ d |\Delta(\bm \eta_1)|}{\lfloor \delta s \rfloor} \right) \left(1 + \frac{ d |\Delta(\bm \eta_2)|}{\lfloor \delta s \rfloor} \right) \right\}^{Z_j}   \right\}  \Big| \cF \right] \chi_1 \nonumber \\ 
		& = \E_{T_1, T_2} \left[ \prod_{j \in T_1 \cap T_2} e^{-\frac{ n (|\Delta(\bm \eta_1)| + |\Delta(\bm \eta_2)|)}{ \lfloor \delta s \rfloor}} e^{ \frac{n(|\Delta(\bm \eta_1)|+|\Delta(\bm \eta_2)|)}{\lfloor \delta s \rfloor}  + \frac{n d |\Delta(\bm \eta_1)| |\Delta(\bm \eta_2)|}{ \lfloor \delta s \rfloor^2}}    \Big| \cF \right] \chi_1 \nonumber \\ 
		& = \E_{T_1, T_2} \left[  e^{  \frac{n d |\Delta(\bm \eta_1)| |\Delta(\bm \eta_2)| |T_1 \cap T_2|}{ \lfloor \delta s \rfloor^2}}  \Big| \cF \right] \chi_1. 
		\end{align} 
		Now, using the fact that $|T_1 \cap T_2| \big| \cF \sim \mathrm{Hypergeometric}(d/2, \lfloor \delta s \rfloor, \lfloor \delta s \rfloor)$, which is dominated by the $\dBin(\lfloor \delta s \rfloor, \frac{2 \lfloor \delta s \rfloor}{d} )$ distribution in convex ordering \cite[Proposition 20.6]{aldous1985exchangeability}, it follows that,  
		\begin{align}
		\E_{T_1, T_2}[W_1|\cF] & =\left(1- \frac{2 \lfloor \delta s \rfloor }{d} + \frac{2 \lfloor \delta s \rfloor }{d}  e^{  \frac{n d |\Delta(\bm \eta_1)| |\Delta(\bm \eta_2)| }{ \lfloor \delta s \rfloor^2}}  \right)^{\lfloor \delta s \rfloor} \chi_1 \nonumber \\ 
		& \leq \exp\left\{  \frac{2 \lfloor \delta s \rfloor^2 }{d} \left(e^{  \frac{n d |\Delta(\bm \eta_1)| |\Delta(\bm \eta_2)| }{ \lfloor \delta s \rfloor^2}}  - 1 \right) \right\} \chi_1 \nonumber \\ 
		& \leq \exp\left\{  \frac{2 \lfloor \delta s \rfloor^2 }{d} \left(e^{  \frac{n s^2  \gamma_d^2 }{d  t \lfloor \delta s \rfloor^2}}  - 1 \right) \right\} \chi_1 \tag*{(since $\max\{|\Delta(\bm \eta_1)|, |\Delta(\bm \eta_2)|\} \leq \frac{\gamma_d s}{d \sqrt t}$ on $\chi_1$)} \nonumber \\ 
		\label{eq:LT12_F_I}&=\exp\left\{ O\left(   \frac{n s  \gamma_d^2 }{d^2 } \right) \right\} \chi_1 \\ 
		\label{eq:LT12_F_II} & = (1+o(1)) \chi_1.
		\end{align}
		For \eqref{eq:LT12_F_I} we use $e^x-1 \leq x + O(x^2)$ as $x \rightarrow 0$ and $\frac{n t_{s, d}^2}{dt} \ll 1$. This can be shown by recalling $\gamma_d^2 \ll \frac{\sqrt s}{\log d}$ as follows: 
		\begin{itemize} 
			
			\item If $\alpha > \frac{1}{2}$ and $n \lesssim d \log d$, $\frac{n t_{s, d}^2}{dt} \lesssim \frac{n t_{s, d}^2}{ds} \ll \frac{n}{d \log d} \lesssim 1$.  
			
			\item If $\alpha \leq \frac{1}{2}$ and $n \ll d^{\frac{1}{2}+\alpha}$, $\frac{n t_{s, d}^2}{dt} \lesssim \frac{n t_{s, d}^2}{ds} \ll \frac{n}{d \log d} \ll \frac{1}{d^{\frac{1}{2}-\alpha} \log d} \ll 1$. 
			
		\end{itemize}
		Similarly, for \eqref{eq:LT12_F_II} we use $e^x-1 \leq x + O(x^2)$ as $x \rightarrow 0$  and  $\frac{n s  \gamma_d^2 }{d^2 } \ll 1$. To see this, we use  $\gamma_d^2 \ll \frac{\sqrt s}{\log d}$ and note the following: 
		\begin{itemize} 
			
			\item If $\alpha \leq \frac{1}{2}$ and $n \ll d^{\frac{1}{2}+\alpha}$, $\frac{ns \gamma_d^2}{d^2} = \frac{n \gamma_d^2}{d^{1+\alpha}} \ll \frac{\sqrt s}{\sqrt d \log d} \ll 1$. 
			
			\item If $\alpha > \frac{1}{2}$ and $n \lesssim d \log d$, then $\frac{ns \gamma_d^2}{d^2} = \frac{n \gamma_d^2}{d^{1+\alpha}} \lesssim \frac{\sqrt s}{d^\alpha} = d^{\frac{1}{2} - \frac{3 \alpha}{2}}\ll 1$. 
		\end{itemize}

		{Next, consider $W_2$. Then as in \eqref{eq:LT1T2} above,   
			\begin{align}
			\E_{T_1, T_2} & [W_2|\cF] \nonumber \\ 
			& = \E_{T_1, T_2} \left[\E_{H_0} \left\{L_{2, 1}^+(T_1, \bm \eta_1) L_{2, 1}^-(T_2, \bm \eta_2) \right\}  \Big| \cF \right] \chi_2 \nonumber \\ 
			& = \E_{T_1, T_2} \left[\E_{H_0} \left\{ \prod_{j \in T_1 \cap T_2} e^{-\frac{ n (|\Delta(\bm \eta_1)| - |\Delta(\bm \eta_2)|)}{ \lfloor \delta s \rfloor}}\left\{\left(1 + \frac{ d |\Delta(\bm \eta_1)|}{\lfloor \delta s \rfloor} \right) \left(1 - \frac{ d |\Delta(\bm \eta_2)|}{\lfloor \delta s \rfloor} \right) \right\}^{Z_j}   \right\}  \Big| \cF \right] \chi_2 \nonumber \\ 
			& = \E_{T_1, T_2} \left[ \prod_{j \in T_1 \cap T_2} e^{-\frac{ n (|\Delta(\bm \eta_1)| - |\Delta(\bm \eta_2)|)}{ \lfloor \delta s \rfloor}} e^{ \frac{n(|\Delta(\bm \eta_1)|-|\Delta(\bm \eta_2)|)}{\lfloor \delta s \rfloor}  - \frac{n d |\Delta(\bm \eta_1)| |\Delta(\bm \eta_2)|}{ \lfloor \delta s \rfloor^2}}    \Big| \cF \right] \chi_2  \nonumber \\ 
			& \leq \chi_2. \nonumber  
			\end{align} }

		The result in \eqref{eq:L21W_I} for $W_3$ and $W_4$ follows similarly. This implies the result in \eqref{eq:L21W_II}, since $\chi_1+\chi_2+\chi_3+\chi_4 \leq 1$. \end{proof}

	\begin{lem}\label{lm:L1_II} For $L_1$ as defined in \eqref{eq:L1_T_I} and the conditions of Proposition \ref{ppn:threshold_lb_II}, 
		$$\E_{\substack{(S_1, \bm \eta_1) \\ (S_2, \bm \eta_2) }} \E_{H_0} \left[L_1(S_1, \bm \eta_1) L_1(S_2, \bm \eta_2)  \right] \leq 1+ o(1).$$
	\end{lem}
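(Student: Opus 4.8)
The plan is a conditional second–moment computation. Set $a_{i,j}:=\delta+\eta_{i,j}\frac{(1-\delta)s}{t}=\delta+\eta_{i,j}\frac{1-\delta}{\delta}$ (using $t=\delta s$); since $(1-\delta)(1-\eta_{i,j}\tfrac{s}{t})=-(a_{i,j}-1)$, one has $L_1(S_i,\bm\eta_i)=\prod_{j\in S_i}e^{-\frac{n}{d}(a_{i,j}-1)}\,a_{i,j}^{Z_j}$. Under $H_0$ the $Z_j$ are independent $\dPois(n/d)$ with $\E_{H_0}[a^{Z_j}]=e^{\frac{n}{d}(a-1)}$, so every index $j$ lying in exactly one of $S_1,S_2$ contributes a factor of $H_0$-expectation precisely $1$ (the prefactor cancels the moment generating function), and the same cancellation applied on $S_1\cap S_2$ to $a_{1,j}^{Z_j}a_{2,j}^{Z_j}=(a_{1,j}a_{2,j})^{Z_j}$ gives
\[
\E_{H_0}\!\big[L_1(S_1,\bm\eta_1)L_1(S_2,\bm\eta_2)\,\big|\,S_1,S_2,\bm\eta_1,\bm\eta_2\big]=\prod_{j\in S_1\cap S_2}e^{\frac{n}{d}(a_{1,j}-1)(a_{2,j}-1)}.
\]

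First I would average over the coins: since the pairs $(\eta_{1,j},\eta_{2,j})$ are i.i.d.\ over $j$, the right–hand side has $\bm\eta$-expectation $\kappa^{|S_1\cap S_2|}$, where $\kappa:=\E[\exp\{\tfrac{n}{d}(a-1)(a'-1)\}]$ for $\eta,\eta'$ independent $\dBer(\delta)$ and $a-1=\tfrac{(1-\delta)(\eta-\delta)}{\delta}$. Since $\E[a-1]=0$, Jensen gives $\kappa\ge 1$. Averaging over the sets, $|S_1\cap S_2|$ has the $\mathrm{Hypergeometric}(d/2,m,m)$ law with $m=s-\lfloor\delta s\rfloor\le s$, which is dominated in convex order by $\dBin(m,2m/d)$ \cite[Proposition 20.6]{aldous1985exchangeability}; as $x\mapsto\kappa^x$ is convex, this yields
\[
\E_{H_0}\!\big[L_1(S_1,\bm\eta_1)L_1(S_2,\bm\eta_2)\big]\le\Big(1+\tfrac{2m}{d}(\kappa-1)\Big)^{m}\le\exp\!\Big(\tfrac{2s^2}{d}(\kappa-1)\Big),
\]
so it remains to show $\tfrac{s^2}{d}(\kappa-1)=o(1)$.

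The crucial observation is that the $O(n/d)$ term of $\kappa-1$ vanishes by the ``balanced'' choice of prior: writing $u_1,u_2,u_3$ for the three values of $\tfrac{n}{d}(a-1)(a'-1)$, occurring with probabilities $\delta^2$, $(1-\delta)^2$, $2\delta(1-\delta)$, one has $\delta^2 u_1+(1-\delta)^2u_2+2\delta(1-\delta)u_3=\tfrac{n}{d}(\E[a-1])^2=0$, hence $\kappa-1=\delta^2(e^{u_1}-1-u_1)+(1-\delta)^2(e^{u_2}-1-u_2)+2\delta(1-\delta)(e^{u_3}-1-u_3)$, and $|e^u-1-u|\le\tfrac12u^2e^{|u|}$ gives $\kappa-1\le C_\delta(n/d)^2e^{c_\delta n/d}$ for constants $c_\delta,C_\delta$ depending only on $\delta$. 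Now split by regime. If $\alpha\le\tfrac12$ and $n\ll d^{1/2+\alpha}$ then $n/d\ll d^{\alpha-1/2}\le1$, so $e^{c_\delta n/d}=O_\delta(1)$ and $\tfrac{s^2}{d}(\kappa-1)\lesssim_\delta\tfrac{s^2n^2}{d^3}=\tfrac{n^2}{d^{1+2\alpha}}\to0$ exactly because $n\ll d^{1/2+\alpha}$. If $\alpha>\tfrac12$ and $n\ll d\log d$ then $n/d=o(\log d)$, so $(n/d)^2e^{c_\delta n/d}=e^{o(\log d)}=d^{o(1)}$ while $\tfrac{s^2}{d}=d^{1-2\alpha}$, whence $\tfrac{s^2}{d}(\kappa-1)\lesssim_\delta d^{1-2\alpha+o(1)}\to0$ since $1-2\alpha<0$. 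In both cases $\E_{H_0}[L_1(S_1,\bm\eta_1)L_1(S_2,\bm\eta_2)]\le e^{o(1)}=1+o(1)$.

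The main obstacle is really just the bookkeeping around this cancellation: without exploiting $\E[a-1]=0$ one gets only $\kappa-1\asymp_\delta n/d$, which the prefactor $s^2/d\gtrsim1$ in the dense regime would inflate, so the balanced structure of the prior is essential; and one must keep the $\delta$-dependence of $c_\delta$ explicit so that, in the sparse regime, $e^{c_\delta n/d}$ is genuinely $d^{o(1)}$ — this is precisely where the hypothesis $n\ll d\log d$ (rather than anything weaker) is used. Everything else, namely the coordinatewise cancellation and the Hypergeometric-to-Binomial convex-ordering step, is routine and already appears elsewhere in the paper.
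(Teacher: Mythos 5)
Your proof is correct and follows essentially the same route as the paper's: reduce to $\prod_{j\in S_1\cap S_2}e^{\frac{n}{d}(a_{1,j}-1)(a_{2,j}-1)}$ via the Poisson moment generating function, exploit the balanced-prior cancellation $\E[a-1]=0$ so that only the quadratic term of $\kappa-1$ survives, and finish with the Hypergeometric-to-Binomial convex ordering and the two regime checks. Your per-coordinate computation of $\kappa$ is in fact a slight streamlining: it sidesteps the paper's decomposition of $S_1\cap S_2$ into the coincidence classes $(S_1\cap S_2)_{11},(S_1\cap S_2)_{00},(S_1\cap S_2)_{01}$ and the accompanying negative-association argument, since the multinomial structure is captured exactly by the single factor $\kappa^{|S_1\cap S_2|}$.
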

	
	\begin{proof} To begin with note that 
		\begin{align}\label{eq:L1_lb_ts}
		\E_{(S, \bm \eta)} \left[ L_1(S, \bm \eta) \right] & =  \E_{(S, \bm \eta)} \prod_{j \in S} \left[e^{\frac{n (1-\delta)}{d} (1-\eta_j \frac{s}{t})} \left( \delta + \eta_j \frac{(1-\delta)s}{t}\right)^{Z_{j}}  \right]. 
		\end{align} 
		Denote $\bar \delta=1-\delta$ and $\bar \eta_j=\eta_j \frac{s}{t} -1$. Then using \eqref{eq:L1_lb_ts}, 
		\begin{align}
		& \E_{\substack{(S_1, \bm \eta_1) \\ (S_2, \bm \eta_2) }} \E_{H_0} \left[L_1(S_1, \bm \eta_1) L_1(S_2, \bm \eta_2)  \right]  \nonumber \\ 
		&= \E_{\substack{(S_1, \bm \eta_1)\\(S_2, \bm \eta_2)}} \E_{H_0}\left[  \prod_{j \in S_1 \cap S_2} e^{\frac{n \bar \delta }{d} (2-(\eta_j+\eta_j') \frac{s}{t})} \left(\left( 1-\bar \delta + \eta_j \frac{\bar \delta s}{t}\right) \left( 1-\bar \delta + \eta_j' \frac{\bar \delta s}{t}\right)\right)^{Z_{j}}  \right]
		\nonumber \\ 
		&= \E_{\substack{(S_1, \bm \eta_1)\\(S_2, \bm \eta_2)}} \E_{H_0}\left[  \prod_{j \in S_1 \cap S_2} e^{\frac{n \bar \delta }{d} (2-(\eta_j+\eta_j') \frac{s}{t})} \left( ( 1 + \bar \delta \bar \eta_j )  ( 1 + \bar \delta \bar \eta_j' ) \right)^{Z_{j}}  \right]
		\nonumber \\ 
		&= \E_{\substack{(S_1, \bm \eta_1)\\(S_2, \bm \eta_2)}} \left[  \prod_{j \in S_1 \cap S_2} e^{-\frac{n \bar \delta }{d} (\bar \eta_j+\bar \eta_j')  } e^{\frac{n}{d}( (\bar \eta_j + \bar \eta_j')\bar \delta + \bar \eta_j \bar \eta_j' \bar \delta^2)} \right] \tag*{(using \eqref{eq:exptheta}) }\nonumber \\ 
		\nonumber \\ 
		&= \E_{\substack{(S_1, \bm \eta_1)\\(S_2, \bm \eta_2)}} \left[  \prod_{j \in S_1 \cap S_2}   e^{\frac{n \bar \delta^2}{d}  \bar \eta_j \bar \eta_j'  } \right] \nonumber \\ 
		\nonumber \\ 
		&= \E_{\substack{(S_1, \bm \eta_1)\\(S_2, \bm \eta_2)}} \left[ e^{\frac{n \bar \delta^2}{d}  (\frac{s}{t} -1)^2 |(S_1 \cap S_2)_{11}| } e^{-\frac{n \bar \delta^2}{d}  (\frac{s}{t} -1) |(S_1 \cap S_2)_{01}| } e^{\frac{n \bar \delta^2}{d}  |(S_1 \cap S_2)_{00}| } \right] \nonumber \\ 
		&= \E_{\substack{(S_1, \bm \eta_1)\\(S_2, \bm \eta_2)}} \left[ e^{-\frac{n \bar \delta^2}{d}  (\frac{s}{t} -1) |(S_1 \cap S_2)| } e^{\frac{n \bar \delta^2}{d} \left((\frac{s}{t} -1)^2 + (\frac{s}{t} -1) \right) |(S_1 \cap S_2)_{11}| }  e^{\frac{n \bar \delta^2}{d}  \left( (\frac{s}{t} -1)  + 1 \right) |(S_1 \cap S_2)_{00}| } \right] \nonumber \\ 
		&= \E_{\substack{(S_1, \bm \eta_1)\\(S_2, \bm \eta_2)}} \left[ e^{-\frac{n \bar \delta^2}{d}  (\frac{s}{t} -1) |(S_1 \cap S_2)| } e^{\frac{n s \bar \delta^2}{dt} (\frac{s}{t} -1) |(S_1 \cap S_2)_{11}| }  e^{\frac{n s \bar \delta^2}{dt}  |(S_1 \cap S_2)_{00}| } \right],
		\label{eq:L2_lb_I}
		\end{align}
		where $(S_1\cap S_2)_{11}=\{r: \eta_r=\eta_r'=1\}$, $(S_1\cap S_2)_{00}=\{r: \eta_r =  \eta_r'=0\}$, and  $(S_1\cap S_2)_{01}=\{r: \eta_r \ne \eta_r' \}$. Now, using $|(S_1 \cap  S_2)_{11}| \Big| S_1  \cap  S_2 \sim \dBin(|S_1  \cap  S_2|, \frac{t^2}{s^2})$ and $|(S_1 \cap  S_2)_{00}| \Big| S_1  \cap  S_2 \sim \dBin(|S_1  \cap  S_2|, (1-\frac{t}{s})^2)$, it follows that 
		\begin{align}
		\E_{H_0} \left[ e^{\frac{n s \bar \delta^2}{dt} (\frac{s}{t} -1) |(S_1 \cap S_2)_{11}| }  \Big| S_1  \cap  S_2\right] & = \left[\left(1-\frac{t^2}{s^2} \right) + \frac{t^2}{s^2}  e^{\frac{n s \bar \delta^2}{dt} (\frac{s}{t} -1) } \right]^{|S_1 \cap S_2| } \nonumber \\ 
	\label{eq:S12_I}	& = \exp\left\{|S_1\cap S_2| \log\left(1-\frac{t^2}{s^2}  \left[ 1- e^{\frac{n s \bar \delta^2}{dt} (\frac{s}{t} -1) } \right]\right)\right\}  \\ 
		& \leq \exp\left\{- (1+o(1)) |S_1\cap S_2| \frac{t^2}{s^2}  \left[ 1- e^{\frac{n s \bar \delta^2}{dt} \left(\frac{s}{t} -1 \right) } \right] \right\}.  \nonumber
		\end{align} 
		Similarly,  
		\begin{align}\label{eq:S12_II}  
		\E_{H_0} \left[ e^{\frac{n s \bar \delta^2}{dt}  |(S_1 \cap S_2)_{00}| }  \Big| S_1  \cap  S_2\right] & = \left[ 1- \left(1-\frac{t}{s}\right)^2  + \left(1-\frac{t}{s}\right)^2  e^{\frac{n s \bar \delta^2}{d t}} \right]^{|S_1 \cap S_2| } \\ 
				& \leq \exp\left\{-(1+o(1)) |S_1\cap S_2| \left(1-\frac{t}{s} \right)^2  \left[ 1- e^{\frac{n s \bar \delta^2}{dt}  } \right] \right\}. \nonumber 
		\end{align}
By the negative association property of the multinomial distribution \cite[Section 3]{multinomial}, it follows that $|(S_1 \cap  S_2)_{11}| \big| S_1  \cap  S_2$ and $|(S_1 \cap  S_2)_{00}| \big| S_1  \cap  S_2$ have the negative association property. Using this and recalling  \eqref{eq:L2_lb_I} gives, 
		\begin{align}\label{eq:L2_lb_II}
		& \E_{H_0} \left[\E_{\bm \eta_1, \bm \eta_2} \left[L_1(S_1, \bm \eta_1) L_1(S_2, \bm \eta_2)  \Big | S_1 \cap S_2 \right] \right]  \nonumber \\
		& \leq  \exp\left\{ (1+o(1))  |S_1\cap S_2| \left( - \frac{n \bar \delta^3}{\delta d}  + \delta^2  \left[ e^{\frac{n  \bar \delta^3}{\delta^2 d} } - 1 \right] + \bar \delta^2  \left[ e^{\frac{n  \bar \delta^2}{\delta d} } -1 \right] \right) \right\} .  
		\end{align}
		Now consider two cases: 
		
		\begin{itemize} 
			
			\item $\alpha \leq \frac{1}{2}$ and $n \ll d^{\frac{1}{2}+\alpha}$: This implies $n \ll d$. Hence,  
			using $e^x-1 \leq x +O(x^2)$, when $x \rightarrow 0$, in \eqref{eq:L2_lb_II} and the Hypergeometric-Binomial convex ordering argument \cite[Proposotion 20.6]{aldous1985exchangeability} gives 
			\begin{align}
			\E_{\substack{(S_1, \bm \eta_1) \\ (S_2, \bm \eta_2) }} \E_{H_0} \left[L_1(S_1, \bm \eta_1) L_1(S_2, \bm \eta_2)  \right]  & \leq \E_{S_1, S_2}e^{|S_1\cap S_2| O\left(\frac{n^2}{d^2}\right)} \nonumber \\ 
			&\leq \left(1-\frac{s}{d} + \frac{s}{d} e^{ O\left(\frac{n^2}{d^2}\right)} \right)^s \nonumber \\
			& = \exp\left\{s \log \left(1-\frac{s}{d} \left[1 - e^{ O\left(\frac{n^2}{d^2}\right)} \right] \right)\right\} \nonumber  \\ 
			&\leq \exp\left\{-\frac{s^2}{d} \left[1 - e^{ O\left(\frac{n^2}{d^2}\right)} \right] \label{eq:S1S2} \right\} \\ 
		& \leq e^{O\left(\frac{n^2 s^2}{d^3}\right) }=1+o(1), \nonumber 
			\end{align}
			where the last step uses $\frac{n^2 s^2}{d^3} \ll 1$, which follows from $n \ll d^{\frac{1}{2}+\alpha}$. 
			
			\item $\alpha > \frac{1}{2}$ and $n \ll d \log d$: To begin with suppose $n \ll d$. Then by arguments as in \eqref{eq:S1S2},  
		\begin{align*}
		\E_{\substack{(S_1, \bm \eta_1) \\ (S_2, \bm \eta_2) }} \E_{H_0} \left[L_1(S_1, \bm \eta_1) L_1(S_2, \bm \eta_2)  \right] & \leq \exp\left\{\frac{s^2}{d} \left[ e^{ O\left(\frac{n^2}{d^2}\right)} - 1 \right] \right\} \nonumber \\ 
		& \leq e^{O\left(\frac{n^2 s^2}{d^3}\right) }=1+o(1), \nonumber 
			\end{align*}
since $n \ll d$ and $s^2 \ll d$ for $\alpha > \frac{1}{2}$. Now, suppose $d \lesssim n \ll d \log d$. Then from \eqref{eq:S12_I}, 
\begin{align}\label{eq:S12dlogd_I}
		\E_{H_0} \left[ e^{\frac{n s \bar \delta^2}{dt}  |(S_1 \cap S_2)_{11}| }  \Big| S_1  \cap  S_2\right] & \leq  \exp\left\{|S_1\cap S_2| \left(\frac{n s \bar \delta^2}{d t} \left(\frac{s}{t} -1\right) + O(1) \right) \right\}. 
\end{align}		
Similarly, from \eqref{eq:S12_II}, 
\begin{align}\label{eq:S12dlogd_II}
		\E_{H_0} \left[ e^{\frac{n s \bar \delta^2}{dt}  |(S_1 \cap S_2)_{00}| }  \Big| S_1  \cap  S_2\right] & \leq  \exp\left\{|S_1\cap S_2| \left(\frac{n s \bar \delta^2}{d t} \left(1-\frac{t}{s} \right)^2 + O(1) \right) \right\}. 
\end{align} 
Using \eqref{eq:S12dlogd_I} and \eqref{eq:S12dlogd_II} in \eqref{eq:L2_lb_I} now gives, 
		\begin{align*}
		\E_{H_0} \left[\E_{\bm \eta_1, \bm \eta_2} \left[L_1(S_1, \bm \eta_1) L_1(S_2, \bm \eta_2)  \Big | S_1 \bigcap S_2 \right] \right]    \leq e^{|S_1\cap S_2| O(1)} .  
		\end{align*}
This implies, by the Hypergeometric-Binomial convex ordering argument \cite[Proposition 20.6]{aldous1985exchangeability}, that 
		\begin{align}
			\E_{\substack{(S_1, \bm \eta_1) \\ (S_2, \bm \eta_2) }} \E_{H_0} \left[L_1(S_1, \bm \eta_1) L_1(S_2, \bm \eta_2)  \right]  & \leq \E_{S_1, S_2}e^{|S_1\cap S_2| O(1)} \nonumber \\ 
			&\leq \left(1-\frac{s}{d} + \frac{s}{d} e^{ O(1)} \right)^s \nonumber \\
			& \leq e^{O\left(\frac{s^2}{d}\right) }=1+o(1), \nonumber 
		\end{align}
since $s^2 \ll d$ for $\alpha > \frac{1}{2}$.			
		\end{itemize} 		
This completes the proof of Lemma \ref{lm:L1_II}. 
	\end{proof}

	The proof of Lemma \ref{lm:L_H12} (a) now can be completed using the above two lemmas as follows: Recall from \eqref{eq:L1_ST}, 
	\begin{align}
	\E_{H_0} L_{\pi_n, 1}^2 & = \E_{\substack{(S_1, \bm \eta_1) \\ (S_2, \bm \eta_2) }} \E [\E_{T_1, T_2} [ L_{\pi_n, 1}^2 | S_1, S_2, \bm \eta_1, \bm \eta_2 ] ] \nonumber \\ 
	& = \E_{\substack{(S_1, \bm \eta_1) \\ (S_2, \bm \eta_2) }}  \left[L_1(S_1, \bm \eta_1) L_1(S_2, \bm \eta_2)  \E_{T_1, T_2} [\left\{ W_1 + W_2 + W_3 + W_4 \right\} | S_1, S_2, \bm \eta_1, \bm \eta_2 ]\right] \nonumber \\ 
	& \leq (1+o(1)) \E_{\substack{(S_1, \bm \eta_1) \\ (S_2, \bm \eta_2) }}  \left[L_1(S_1, \bm \eta_1) L_1(S_2, \bm \eta_2)   \right] \tag*{(by Lemma \ref{lm:L1_I})} \nonumber \\ 
	& \leq (1+o(1)),  \nonumber 
	\end{align}
	where the last step uses Lemma \ref{lm:L1_II}. This completes the proof of Lemma \ref{lm:L_H12} (a). \\ 
	
	\noindent{\it Proof of Lemma} \ref{lm:L_H12} (b): Note that 
	Recall that 
	$$L_{\pi_n, 2} = \E_{(S, \bm \eta)} \left[ L_1(S, \bm \eta)   \left(1 +  d r_n \right)^{Z_{d/2+1}}    \left(1 -  d r_n \right)^{Z_{d/2+2}}  \bm 1\{\bm \eta \in \cG^c\} \right].$$
	This implies, 
	\begin{align}
 &	\E_{H_0} [L_{\pi_n, 2}^2] \nonumber \\ 
	& =  \E_{\substack{(S_1, \bm \eta_1) \\ (S_2, \bm \eta_2) }} \E_{H_0} \left[L_1(S_1, \bm \eta_1) L_1(S_2, \bm \eta_2)  \left(1 +  d r_n \right)^{2 Z_{d/2+1}}    \left(1 -  d r_n \right)^{2 Z_{d/2+2} } \bm 1\{\bm \eta_1 \in \cG^c\} \bm 1\{\bm \eta_2 \in \cG^c\} \right] \nonumber \\ 
	& = e^{2n d r_n^2} \E_{\substack{(S_1, \bm \eta_1)\\(S_2, \bm \eta_2)}} \left[   \prod_{j \in S_1 \cap S_2}   e^{\frac{n \bar \delta^2}{d}  \bar \eta_j \bar \eta_j'  }    \bm 1\{\bm \eta_1 \in \cG^c\} \bm 1\{\bm \eta_2 \in \cG^c\}  \right] \nonumber \\ 
	& \leq e^{2n d r_n^2} \left( \E_{\substack{(S_1, \bm \eta_1)\\(S_2, \bm \eta_2)}} \left[     e^{\frac{2 n \bar \delta^2}{d} \sum_{j \in S_1 \cap S_2} \bar \eta_j \bar \eta_j'  } \right] \right)^{\frac{1}{2}}   \P(\bm \eta_1 \in \cG^c ) \nonumber \\ 
	&= (1+o(1)) e^{2n d r_n^2} \P(\bm \eta_1 \in \cG^c ) = o(1), \nonumber 
	\end{align}
	using arguments as in Lemma \ref{lm:L1_II}, $\P(\bm \eta_1 \in \cG^c ) = o(1)$ (recall \eqref{eq:barG}), and $e^{2n d r_n^2}=1+o(1)$ (since $r_n \ll 1/\sqrt{nd}$).  \hfill $\Box$

\small

\medskip

\small{\subsection*{Acknowledgment}  B. B. Bhattacharya was supported by NSF CAREER grant DMS 2046393, a Sloan Research Fellowship, and Wharton Dean's Research Fund. }

\bibliographystyle{abbrvnat}
\bibliography{biblio_sparse_uniformity}

\begin{thebibliography}{31}
\providecommand{\natexlab}[1]{#1}
\providecommand{\url}[1]{\texttt{#1}}
\expandafter\ifx\csname urlstyle\endcsname\relax
  \providecommand{\doi}[1]{doi: #1}\else
  \providecommand{\doi}{doi: \begingroup \urlstyle{rm}\Url}\fi

\bibitem[Acharya et~al.(2015)Acharya, Daskalakis, and Kamath]{ADKoptimal}
J.~Acharya, C.~Daskalakis, and G.~Kamath.
\newblock Optimal testing for properties of distributions.
\newblock In \emph{Proceedings of the 28th International Conference on Neural
  Information Processing Systems-Volume 2}, pages 3591--3599, 2015.

\bibitem[Aldous(1985)]{aldous1985exchangeability}
D.~J. Aldous.
\newblock Exchangeability and related topics.
\newblock In \emph{{\'E} school of {\'E}t{\'e} de Probabilit{\'e} s of
  Saint-Flour XIII — 1983}, pages 1--198. Springer, 1985.

\bibitem[Arias-Castro and Wang(2015)]{arias2015sparse}
E.~Arias-Castro and M.~Wang.
\newblock The sparse poisson means model.
\newblock \emph{Electronic Journal of Statistics}, 9\penalty0 (2):\penalty0
  2170--2201, 2015.

\bibitem[Arias-Castro et~al.(2011)Arias-Castro, Cand{\`e}s, and
  Plan]{arias2011global}
E.~Arias-Castro, E.~J. Cand{\`e}s, and Y.~Plan.
\newblock Global testing under sparse alternatives: Anova, multiple comparisons
  and the higher criticism.
\newblock \emph{The Annals of Statistics}, pages 2533--2556, 2011.

\bibitem[Balakrishnan and Wasserman(2018)]{balakrishnan2018hypothesis}
S.~Balakrishnan and L.~Wasserman.
\newblock Hypothesis testing for high-dimensional multinomials: A selective
  review.
\newblock \emph{The Annals of Applied Statistics}, 12\penalty0 (2):\penalty0
  727--749, 2018.

\bibitem[Balakrishnan and Wasserman(2019)]{balakrishnan2019hypothesis}
S.~Balakrishnan and L.~Wasserman.
\newblock Hypothesis testing for densities and high-dimensional multinomials:
  Sharp local minimax rates.
\newblock \emph{The Annals of Statistics}, 47\penalty0 (4):\penalty0
  1893--1927, 2019.

\bibitem[Batu and Canonne(2017)]{canonneuniformity}
T.~Batu and C.~L. Canonne.
\newblock Generalized uniformity testing.
\newblock In \emph{2017 IEEE 58th Annual Symposium on Foundations of Computer
  Science (FOCS)}, pages 880--889. IEEE, 2017.

\bibitem[Butucea and Ingster(2013)]{butucea2013detection}
C.~Butucea and Y.~I. Ingster.
\newblock Detection of a sparse submatrix of a high-dimensional noisy matrix.
\newblock \emph{Bernoulli}, 19\penalty0 (5B):\penalty0 2652--2688, 2013.

\bibitem[Butucea and Issartel(2021)]{butucea2021locally}
C.~Butucea and Y.~Issartel.
\newblock Locally differentially private estimation of nonlinear functionals of
  discrete distributions.
\newblock \emph{arXiv preprint arXiv:2107.03940}, 2021.

\bibitem[Cai and Wu(2014)]{cai2014optimal}
T.~T. Cai and Y.~Wu.
\newblock Optimal detection of sparse mixtures against a given null
  distribution.
\newblock \emph{IEEE Transactions on Information Theory}, 60\penalty0
  (4):\penalty0 2217--2232, 2014.

\bibitem[Cai et~al.(2011)Cai, Jessie~Jeng, and Jin]{tony2011optimal}
T.~T. Cai, X.~Jessie~Jeng, and J.~Jin.
\newblock Optimal detection of heterogeneous and heteroscedastic mixtures.
\newblock \emph{Journal of the Royal Statistical Society: Series B (Statistical
  Methodology)}, 73\penalty0 (5):\penalty0 629--662, 2011.

\bibitem[Cai et~al.(2016)Cai, Eldar, and Li]{cai2016global}
T.~T. Cai, Y.~C. Eldar, and X.~Li.
\newblock Global testing against sparse alternatives in time-frequency
  analysis.
\newblock \emph{The Annals of Statistics}, 44\penalty0 (4):\penalty0
  1438--1466, 2016.

\bibitem[Canonne(2020)]{canonne2020survey}
C.~L. Canonne.
\newblock A survey on distribution testing: Your data is big. but is it blue?
\newblock \emph{Theory of Computing}, pages 1--100, 2020.

\bibitem[Chhor and Carpentier(2020)]{chhor2020sharp}
J.~Chhor and A.~Carpentier.
\newblock Sharp local minimax rates for goodness-of-fit testing in large random
  graphs, multivariate poisson families and multinomials.
\newblock \emph{arXiv preprint arXiv:2012.13766}, 2020.

\bibitem[Chhor and Sentenac(2022)]{chhor2022robust}
J.~Chhor and F.~Sentenac.
\newblock Robust estimation of discrete distributions under local differential
  privacy.
\newblock \emph{arXiv preprint arXiv:2202.06825}, 2022.

\bibitem[Diakonikolas and Kane(2016)]{DKuniformity}
I.~Diakonikolas and D.~M. Kane.
\newblock A new approach for testing properties of discrete distributions.
\newblock In \emph{2016 IEEE 57th Annual Symposium on Foundations of Computer
  Science (FOCS)}, pages 685--694. IEEE, 2016.

\bibitem[Donoho and Jin(2004)]{donoho2004higher}
D.~Donoho and J.~Jin.
\newblock Higher criticism for detecting sparse heterogeneous mixtures.
\newblock \emph{The Annals of Statistics}, 32\penalty0 (3):\penalty0 962--994,
  2004.

\bibitem[Donoho and Kipnis(2022)]{donoho2020two}
D.~L. Donoho and A.~Kipnis.
\newblock Higher criticism to compare two large frequency tables, with
  sensitivity to possible rare and weak differences.
\newblock \emph{The Annals of Statistics, to appear}, 2022.

\bibitem[Goldreich and Ron(2011)]{goldreich2011testing}
O.~Goldreich and D.~Ron.
\newblock On testing expansion in bounded-degree graphs.
\newblock In \emph{Studies in Complexity and Cryptography. Miscellanea on the
  Interplay between Randomness and Computation}, pages 68--75. Springer, 2011.

\bibitem[Ingster and Suslina(2012)]{ingster2012nonparametric}
Y.~Ingster and I.~A. Suslina.
\newblock \emph{Nonparametric goodness-of-fit testing under Gaussian models},
  volume 169.
\newblock Springer Science \& Business Media, 2012.

\bibitem[Ingster et~al.(2010)Ingster, Tsybakov, and
  Verzelen]{ingster2010detection}
Y.~I. Ingster, A.~B. Tsybakov, and N.~Verzelen.
\newblock Detection boundary in sparse regression.
\newblock \emph{Electronic Journal of Statistics}, 4:\penalty0 1476--1526,
  2010.

\bibitem[Joag-Dev and Proschan(1983)]{multinomial}
K.~Joag-Dev and F.~Proschan.
\newblock Negative association of random variables with applications.
\newblock \emph{The Annals of Statistics}, pages 286--295, 1983.

\bibitem[Kipnis(2021{\natexlab{a}})]{kipnis2019higher}
A.~Kipnis.
\newblock Higher criticism for discriminating word-frequency tables and testing
  authorship.
\newblock \emph{The Annals of Applied Statistics, to appear},
  2021{\natexlab{a}}.

\bibitem[Kipnis(2021{\natexlab{b}})]{kipnis2021unification}
A.~Kipnis.
\newblock Unification of rare/weak detection models using moderate deviations
  analysis and log-chisquared $p$-values.
\newblock \emph{arXiv:2103.03999}, 2021{\natexlab{b}}.

\bibitem[Kipnis and Donoho(2021)]{kipnis2021inability}
A.~Kipnis and D.~L. Donoho.
\newblock On the inability of the higher criticism to detect rare/weak
  departures.
\newblock \emph{arXiv:2103.03218}, 2021.

\bibitem[Mukherjee et~al.(2015)Mukherjee, Pillai, and
  Lin]{mukherjee2015hypothesis}
R.~Mukherjee, N.~S. Pillai, and X.~Lin.
\newblock Hypothesis testing for high-dimensional sparse binary regression.
\newblock \emph{Annals of statistics}, 43\penalty0 (1):\penalty0 352, 2015.

\bibitem[Mukherjee et~al.(2018)Mukherjee, Mukherjee, and
  Sen]{mukherjee2018detection}
R.~Mukherjee, S.~Mukherjee, and S.~Sen.
\newblock Detection thresholds for the $\beta$-model on sparse graphs.
\newblock \emph{The Annals of Statistics}, 46\penalty0 (3):\penalty0
  1288--1317, 2018.

\bibitem[Paninski(2008)]{paninski2008coincidence}
L.~Paninski.
\newblock A coincidence-based test for uniformity given very sparsely sampled
  discrete data.
\newblock \emph{IEEE Transactions on Information Theory}, 54\penalty0
  (10):\penalty0 4750--4755, 2008.

\bibitem[Petrov(1975)]{petrov1975independent}
V.~Petrov.
\newblock Sums of independent random variables.
\newblock \emph{Yu. V. Prokhorov. V. StatuleviCius (Eds.)}, 1975.

\bibitem[Valiant and Valiant(2017)]{valiant2017automatic}
G.~Valiant and P.~Valiant.
\newblock An automatic inequality prover and instance optimal identity testing.
\newblock \emph{SIAM Journal on Computing}, 46\penalty0 (1):\penalty0 429--455,
  2017.

\bibitem[Wu and Yang(2016)]{wu2016minimax}
Y.~Wu and P.~Yang.
\newblock Minimax rates of entropy estimation on large alphabets via best
  polynomial approximation.
\newblock \emph{IEEE Transactions on Information Theory}, 62\penalty0
  (6):\penalty0 3702--3720, 2016.

\end{thebibliography}

\normalsize 	
		
	\appendix
	
	\section{Technical Lemmas} 
\label{sec:lm_pf}
In this section we collect the proofs of various technical lemmas. We begin with the proof of Proposition \ref{ppn:L01}. \\ 
	
%

\noindent {\it Proof of Proposition \ref{ppn:L01}}:  	

We begin by proving the upper bound on $\varepsilon_{\max}$. For this, without loss of generality assume that the first $s$ coordinates of $U([d])$ and $\bm p$ are different. Define $\Delta_i := \frac{1}{d}-p_i$, which is non-zero, for $1 \leq i \leq s$,  and zero, for $s+1 \leq i \leq n$. Note that if $\Delta_i > 0$, then $p_i=\frac{1}{d}-\Delta_i  >0$ which means $\Delta_i  < \frac{1}{d}$. Now, using $\sum_{i=1}^s \Delta_i=\sum_{i=1}^d \Delta_i = 0$ gives, 
		$$||\bm p- U([d])||_1=\sum_{i=1}^s |\Delta_i| =  \sum_{i=1}^s \Delta_i \bm 1\{\Delta_i > 0\} - \sum_{i=1}^s \Delta_i \bm 1\{\Delta_i < 0\} = 2 \sum_{i=1}^s \Delta_i \bm 1\{\Delta_i > 0\}  <  \frac{2 s}{d}.$$
This implies, $\limsup \frac{d \varepsilon_{\max}}{s} \leq 2$. 
		
To prove a matching lower bound, consider $\bm p = (p_1, p_2, \ldots, p_d) \in U([d])$ as follows: 
$$p_j= 
	\left\{
	\begin{array}{cc}
	\frac{s}{d}   &  \text{for }  j = 1, \\ 
	0  &  \text{for } 2 \leq j \leq s, \\
	\frac{1}{d} & \text{otherwise}. 
	\end{array}
	\right.
$$ 	
For this choice of $\bm p$, it is easy to check that $\sum_{j=1}^d p_j=1$, $||\bm p - U([d])||_0$, and $||\bm p - U([d])||_1= \frac{2(s-1)}{d}$. This shows, $\liminf \frac{d \varepsilon_{\max}}{s} \geq 2$. \qed

\medskip
	
Next we prove the following observation regarding the tails of the normal distribution. For this recall that $\bar \varepsilon_j := |\frac{1}{d} - p_j| \sqrt{nd/(2 \log d)}$, for $j \in [d]$.

		\begin{obs}\label{obs:normal_tail} Let $\lambda(d)=\sqrt{2 \log d}$ and $r > 0$ be fixed. Then, if $n \gg d \log^3 d$, 
			\begin{align}\label{eq:normal_tail}
			\sup_{j \in [d]} \left| \frac{\bar \Phi\left(\lambda(d) \sqrt{1/(d p_j)}  \left(\sqrt r \pm \bar \varepsilon_j \right) \right)}{ \bar \Phi\left(\lambda(d)  \left(\sqrt r \pm \bar \varepsilon_j \right) \right)} - 1 \right| = o(1), 
			\end{align}
			where $\bar \varepsilon_j$ is as defined above. 
		\end{obs}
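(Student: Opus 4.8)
The plan is to reduce \eqref{eq:normal_tail} to a uniform estimate on the ratio $\bar\Phi\big(x_j(1+\epsilon_j)\big)/\bar\Phi(x_j)$, where $x_j:=\lambda(d)\big(\sqrt r\pm\bar\varepsilon_j\big)$ is the argument of the normal tail in the denominator and $\epsilon_j$ is defined through $1+\epsilon_j:=(dp_j)^{-1/2}$, so that $\lambda(d)\sqrt{1/(dp_j)}\,(\sqrt r\pm\bar\varepsilon_j)=x_j(1+\epsilon_j)$. I would control this ratio not by estimating the difference of the two tails and then dividing (which is dangerous, since $\bar\Phi(x_j)$ can be as small as $d^{-r}/\sqrt{\log d}$ when $x_j\approx\sqrt{2r\log d}$), but by passing to logarithms and integrating the normal hazard rate.

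First I would record the two quantitative inputs supplied by the setting in which the Observation is invoked, namely the standing assumption $d\max_{j\in[d]}|\Delta_j|\le\sqrt{2d\log d/n}$ in the proof of \eqref{eq:D1_threshold} together with $n\gg d\log^3 d$. These give: (i) $\max_j\bar\varepsilon_j\le 1$ (since $\bar\varepsilon_j=|\Delta_j|\sqrt{nd/(2\log d)}$), hence $|x_j|\le\sqrt{2\log d}\,(\sqrt r+1)$ for all $j$; and (ii) $\rho_d:=\max_j|dp_j-1|=d\max_j|\Delta_j|\le\sqrt{2d\log d/n}=o(1)$, so that in particular $p_j>0$ (the ratio is well defined) and, by a one-term Taylor expansion of $(1+\delta)^{-1/2}$, $\max_j|\epsilon_j|\le\rho_d$ for $d$ large. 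The one numerical fact that genuinely uses the cube is that $\rho_d\log d=\sqrt{2d\log^3 d/n}\,(1+o(1))\to 0$.

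Next I would invoke the elementary bound $h(u):=\phi(u)/\bar\Phi(u)\le u_+ +C_0$, valid for all $u\in\mathbb{R}$ with an absolute constant $C_0$ (immediate from $\bar\Phi(u)\ge\tfrac12$ for $u\le 0$ and from the Mills inequality $\bar\Phi(u)>\tfrac{u}{1+u^2}\phi(u)$ for $u>0$). Since $(\log\bar\Phi)'=-h$, for every $j$ and each choice of sign,
\[
\Big|\log\frac{\bar\Phi\big(x_j(1+\epsilon_j)\big)}{\bar\Phi(x_j)}\Big|=\Big|\int_{x_j}^{x_j(1+\epsilon_j)}h(u)\,du\Big|\le|x_j\epsilon_j|\Big(|x_j|(1+|\epsilon_j|)+C_0\Big)\le x_j^2|\epsilon_j|(1+|\epsilon_j|)+C_0\,|x_j\epsilon_j|,
\]
where one uses that $|\epsilon_j|<1$ forces every $u$ on the interval of integration to satisfy $|u|\le|x_j|(1+|\epsilon_j|)$. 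Substituting $|x_j|\le\sqrt{2\log d}\,(\sqrt r+1)$ and $|\epsilon_j|\le\rho_d$ bounds the right-hand side by $C_1(\log d)\rho_d+C_2\sqrt{\log d}\,\rho_d$, which is $o(1)$ uniformly in $j$ because $\rho_d\log d\to 0$ and $\rho_d\to 0$. Exponentiating gives $\bar\Phi\big(x_j(1+\epsilon_j)\big)/\bar\Phi(x_j)=1+o(1)$ uniformly over $j\in[d]$ and over the two signs, which is exactly \eqref{eq:normal_tail}.

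The argument is otherwise routine; the only point requiring care is the one flagged at the outset, that $\bar\Phi(x_j)$ may be exponentially small, so one must argue multiplicatively (e.g.\ via the logarithm/hazard-rate estimate above) rather than additively. The appearance of the factor $\log d$ alongside $\rho_d=\sqrt{2d\log d/n}$ in the final bound is precisely what dictates the hypothesis $n\gg d\log^3 d$ rather than merely $n\gg d\log d$.
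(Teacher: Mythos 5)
Your proof is correct, and it rests on the same underlying computation as the paper's: a multiplicative (logarithmic) comparison of the two Gaussian tails, driven by the estimate $\max_j|dp_j-1|\cdot\log d=O(\sqrt{d\log^3 d/n})=o(1)$, which is exactly where the hypothesis $n\gg d\log^3 d$ enters in both arguments. The mechanics differ slightly: the paper sandwiches each tail with the two-sided Mills bounds $\tfrac{x}{1+x^2}\phi(x)\le\bar\Phi(x)\le\tfrac{1}{x}\phi(x)$ and reads off $\bar\Phi(\gamma_j\lambda)/\bar\Phi(\beta_j\lambda)=(1+o(1))e^{(\beta_j^2-\gamma_j^2)\log d}$, whereas you integrate the hazard rate $h=\phi/\bar\Phi$ using $(\log\bar\Phi)'=-h$ and the bound $h(u)\le u_++C_0$. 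Your version buys a little extra: it treats the $\sqrt r-\bar\varepsilon_j$ branch uniformly, including the case where that quantity is small or negative (where the Mills lower bound degenerates and $\bar\Phi$ is no longer exponentially small), which the paper dispatches with ``the negative case can be done similarly.'' The only point worth making explicit in your write-up is that $h(u)\le u_++C_0$ near $u=0^+$ does not follow directly from the Mills inequality (which gives $h(u)\le u+1/u$); one should add a line noting that $h$ is increasing, so $h(u)\le h(1)\le 2$ on $(0,1]$ and $h(u)\le u+1$ on $[1,\infty)$, giving the claim with $C_0=2$.
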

		
		\begin{proof} We prove the result in the positive case. The negative case can be done similarly. For $j \in [d]$, let $\beta_{j}:=\sqrt{1/(d p_j)} (\sqrt r + \bar \varepsilon_j )$ and  $\gamma_j:= (\sqrt r + \bar \varepsilon_j )$. Note that 
			$$\sup_{j \in [d]} \left|\frac{\gamma_j}{\beta_j}-1 \right|=O\left(\sqrt{\frac{\log d}{nd}}\right).$$ 
			Then using the inequalities: $\frac{x}{1+x^2} \phi(x) \leq \Phi(x) \leq \frac{1}{x} \phi(x)$, for all $x > 0$ gives, for every $j\in [d]$
			$$\frac{\bar \Phi(\gamma_j  \lambda(d) )}{\bar \Phi( \beta_j \lambda(d) )} =  (1+o(1))  e^{ (\beta_j^2 -\gamma_j^2) \log d},$$
			where the $o(1)$-term goes to zero uniformly over $j \in [d]$. This implies the result in \eqref{eq:normal_tail}, since $\sup_{j \in [d]} |\beta_j^2-\gamma_j^2| \log d =O(\sqrt{\frac{d\log^3 d}{n}})=o(1)$. 
		\end{proof}

\end{document}